\def\ub{\bar{u}}
\def\la{\lambda}
\def\xb{\bar{x}}
\def\al{\alpha}
\def\alb{\bar{\al}}
\def\cll{{\mathcal L}}
\newcommand{\para}{\mbox{\rm par}\,}
\newcommand{\aff}{\mbox{\rm aff}\,}
\newtheorem{remark}[theorem]{Remark}
\newtheorem{example}[theorem]{Example}
\newtheorem{claim}[theorem]{Claim}
\newcommand{\nc}{\newcommand}
\nc{\ip}[2]{\mbox{$\langle #1,#2 \rangle$}}
\nc{\linespace}{\vspace{\baselineskip} \noindent}
\nc{\R}{{\bf R}}
\nc{\h}{\R^n}
\nc{\X}{\mathcal{X}}
\nc{\cl}{\mbox{\rm cl}\,}
\nc{\cls}{ \mbox{{\scriptsize {\rm cl}}}\,}
\nc{\conv}{\mbox{\rm conv}}
\nc{\rb}{\mbox{\rm rb}\,}
\nc{\Lip}{\mbox{\rm Lip}}
\nc{\ri}{\mbox{\rm ri}\,}
\nc{\inter}{\mbox{\rm int}\,}
\nc{\kernel}{\mbox{\rm ker}\,}
\nc{\bd}{\mbox{\rm bd}\,}
\nc{\spann}{\mbox{\rm span}\,}
\nc{\rint}{\mbox{\rm rint}\,}
\nc{\epi}{\mbox{\rm epi}\,}
\nc{\gph}{\mbox{\rm gph}\,}
\nc{\rge}{\mbox{\rm rge}\,}
\nc{\rgel}{\mbox{\rm {\scriptsize rge}}\,}
\nc{\sepi}{\mbox{\rm {\scriptsize epi}}\,}
\nc{\sbd}{\mbox{\rm {\scriptsize bd}}\,}
\nc{\dom}{\mbox{\rm dom}\,}
\nc{\sdom}{\mbox{\rm {\scriptsize dom}}\,}
\nc{\sreg}{\mbox{\rm sreg}\,}
\nc{\lin}{\mbox{\rm lin}\,}
\nc{\detr}{\mbox{\rm det}\,}
\nc{\crit}{\mbox{\rm crit}\,}
\nc{\len}{\mbox{\rm length}\,}
\nc{\cone}{\mbox{\rm cone}\,}
\nc{\fix}{\mbox{\rm Fix}}
\nc{\co}{\mbox{\rm co}\,}
\nc{\cco}{\overline{\mbox{\rm co}}\,}
\nc{\lip}{\mbox{\rm lip}\,}
\def\slof{\overline{|\nabla f|}}
\def\sd{\partial}
\def\al{\alpha}
\newcommand{\argmin}{\operatornamewithlimits{argmin}}
\newcommand{\lf}{\operatornamewithlimits{liminf}}
\newcommand{\ls}{\operatornamewithlimits{limsup}}
\title{ Curves of Descent}
\author{D. Drusvyatskiy\thanks{%
    School of Operations Research and Information Engineering,
    Cornell University,
    Ithaca, New York, USA;
    {\tt http://people.orie.cornell.edu/dd379/}.
    Work of Dmitriy Drusvyatskiy on this paper has been partially supported by the NDSEG grant from the Department of Defense and the US-Israel Binational Science Foundation Travel Grant for Young Scientists.
    }%
	\and
	A.D. Ioffe\thanks{
	Department of Mathematics, Technion-Israel Institute of Technology, Haifa, Israel 32000;	
	{\tt http://www.math.technion.ac.il/Site/people/process.php?id=672}.
	Work of A. D. Ioffe was supported in part by the US-Israel Binational Science Foundation Grant 2008261.
	}
	\and	
  A.S. Lewis\thanks{%
  School of Operations Research and Information Engineering,
  Cornell University,
  Ithaca, New York, USA;
  {\tt http://people.orie.cornell.edu/aslewis/}.
  Research supported in part by National Science Foundation Grant DMS-0806057 and by the US-Israel Binational Scientific Foundation Grant 2008261.
}}
\begin{document}

\maketitle

\begin{abstract}
Steepest descent is central in variational mathematics. We present a new transparent existence proof for curves of near-maximal slope --- an influential notion of steepest descent in a nonsmooth setting. We moreover show that for semi-algebraic functions --- prototypical nonpathological functions in nonsmooth optimization --- such curves are precisely the solutions of subgradient dynamical systems. 
\end{abstract}

\begin{keywords} 
Descent, slope, subdifferential, subgradient dynamical system, semi-algebraic
\end{keywords}

\begin{AMS}
Primary, 26D10; Secondary, 32B20, 49J52, 37B35, 14P15
\end{AMS}

\pagestyle{myheadings}
\thispagestyle{plain}
\markboth{D. Drusvyatskiy, A. D. Ioffe, and A. S. Lewis}{ Curves of descent}

\section{Introduction}
The intuitive notion of {\em steepest descent} plays a central role in theory and practice. So what are steepest descent curves in an entirely nonsmooth setting? 
To facilitate the discussion, it is useful to introduce notation for ``the fastest instantaneous rate of decrease'' of a function $f$ on $\R^n$, namely the {\em slope} 
$$
|\nabla f|(\bar{x}) :=\ls_{x\to \bar{x}\atop{x\neq \bar{x}}}\frac{(f(\bar{x})-f(x))^+}{\|\bar{x}-x\|}.
$$
Here, we use the convention $r^{+}:=\max\{0,r\}$. The slope of a smooth function simply coincides with the norm of the gradient, and hence the notation. For more details on slope see for example \cite{deg_slope}. Even though the definition is deceptively simple, slope plays a central role in regularity theory and sensitivity analysis; see \cite{ioffe_survey,uni_met}.

One can readily verify for any $1$-Lipschitz curve $\gamma\colon (a,b)\to\R^n$ the upper bound on the speed of descent:
\begin{equation}\label{eqn:steep-dep}
|\nabla (f\circ\gamma)|(t)\leq |\nabla f|(\gamma(t)), \quad\quad \textrm{ for a.e. } t\in (a,b).
\end{equation}
It is then natural to call $\gamma$ a {\em steepest descent curve} if the reverse inequality holds in (\ref{eqn:steep-dep}). Such curves, up to a reparametrization and an integrability condition, are the curves of maximal slope studied in \cite{grad_flows, min_mov,max_slppe,eva_first}. Evidently, the slope is not a lower-semicontinuous function of its argument and hence is highly unstable. Replacing the slope $|\nabla f|$ with its lower-semicontinuous envelope in equation (\ref{eqn:steep-dep}) defines {\em near-steepest descent curves}. See Definition~\ref{defn:steep_des} for a more precise statement.

The question concerning existence of near-steepest descent curves is at the core of the subject. Roughly speaking, there are two strategies in the literature for constructing such curves for a function $f$ on $\R^n$. The first one revolves around minimizing $f$ on an increasing sequence of balls around a point until the radius hits a certain threshold, at which point one moves the center to the next iterate and repeats the procedure. Passing to the limit as the thresholds tend to zero, under suitable conditions and a reparametrization, yields a near-steepest descent curve \cite[Section 4]{max_slppe}. The second approach is based on De Georgi's generalized movements \cite{min_mov}. Namely, one builds a piecewise constant curve by declaring the next iterate to be a minimizer of the function $f$ plus a scaling of the squared distance from the previous iterate \cite[Chapter 2]{grad_flows}. The analysis, in both cases, is highly nontrivial and moreover does not give an intuitive meaning to the parametrization of the curve used in the construction. 

In the current work, we propose an alternate transparent strategy for constructing near-steepest descent curves. The key idea of our construction is to discretize the range of $f$ and then build a piecewise linear curve by projecting iterates onto successive sublevel sets. Passing to the limit as the mesh of the partition tends to zero, under reasonable conditions and a reparametrization, yields a near-steepest descent curve.
Moreover, the parametrization of the curve used in the construction is entirely intuitive: the values of the function parametrize the curve. From a technical viewpoint, this type of a parametrization allows for the deep theory of metric regularity to enter the picture \cite{ioffe_survey,imp}, thereby yielding a simple and elegant existence proof.

The question concerning when solutions of subgradient dynamical systems and curves of maximal slope are one and the same has been studied as well. However a major standing assumption that has so far been needed to establish positive answers in this direction is that the slope of the function $f$ is itself a lower-semicontinuous function \cite{grad_flows,max_slppe} and hence it coincides with the limiting slope --- an assumption that many common functions of nonsmooth optimization (e.g. $f(x)=\min\{x,0\}$) do not satisfy. In the current work, we study this question in absence of such a continuity condition. As a result, {\em semi-algebraic functions} --- those functions whose epigraph can be written as a finite union of sets, each defined by finitely many polynomial inequalities \cite{Coste-semi,DM} --- come to the fore. For semi-algebraic functions that are locally Lipschitz continuous on their domains, solutions of subgradient dynamical systems are one and the same as curves of near-maximal slope. Going a step further, using an argument based on the Kurdyka-{\L}ojasiewicz inequality, in the spirit of \cite{tame_opt,Kur,loja,tailwag}, we show that bounded curves of near-maximal slope for semi-algebraic functions necessarily have finite length. Consequently, such curves defined on maximal domains must converge to a critical point of $f$.

In our writing style, rather than striving for maximal generality, we have tried to make the basic ideas and the techniques as clear as possible. The outline of the manuscript is as follows. Section~\ref{sec:prelim} is a short self-contained treatment of variational analysis in metric spaces. In this section, we emphasize that the slope provides a very precise way of quantifying error bounds (Lemma~\ref{blemma}). In Section~\ref{sec:exist} we prove that curves of near-steepest descent exist under reasonable conditions.
In Section~\ref{sec:comp}, we analyze conditions under which curves of near-steepest descent are the same as solutions to subgradient dynamical systems, with semi-algebraic geometry playing a key role.

\section{Preliminaries: Variational analysis in metric spaces}\label{sec:prelim}
Throughout this section, we will let $(\X,d)$ be a complete metric space. We stress that completeness of the metric space will be essential throughout.
Consider the extended real line $\overline{\R}:=\R\cup\{-\infty\}\cup\{+\infty\}$. We say that an extended-real-valued function is proper if it is never $\{-\infty\}$ and is not always $\{+\infty\}$.  
For a function $f\colon\X\rightarrow\overline{\R}$, the {\em domain} of $f$ is $$\mbox{\rm dom}\, f:=\{x\in\X: f(x)<+\infty\},$$ and the {\em epigraph} of $f$ is $$\mbox{\rm epi}\, f:= \{(x,r)\in\X\times\R: r\geq f(x)\}.$$
A function $f\colon\X\to\overline{\R}$ is {\em lower-semicontinuous} (or {\em lsc} for short) at $\bar{x}$ if the inequality $\lf_{x\to\bar{x}} f(x)\geq f(\bar{x})$ holds. For a set $Q\subset\X$ and a point $x\in\X$, the {\em distance} of $x$ from $Q$ is   $$d(x,Q):=\inf_{y\in Q} d(x,y),$$ and the {\em metric projection} of $x$ onto $Q$ is $$P_Q(x):=\{y\in Q: d(x,y)=d(x,Q)\}.$$

\subsection{Slope and error bounds}
A fundamental notion in local variational analysis is that of {\em slope} --- the ``fastest instantaneous rate of decrease'' of a function. For more details about slope and its relevance to the theory of metric regularity, see \cite{uni_met,ioffe_survey}.
\begin{definition}[Slope]\label{slo}{\rm Consider a function $f\colon\X\to\overline{\R}$, and a point $\bar{x}\in\X$ with $f(\bar{x})$ finite. The {\it slope} of $f$ at $\bar{x}$ is
$$
|\nabla f|(\bar{x}) :=\ls_{x\to \bar{x}\atop{x\neq \bar{x}}}\frac{(f(\bar{x})-f(x))^+}{d(\bar{x},x)}.
$$
The {\it limiting slope} is
$$
\slof(\bar{x}):=\lf_{x\xrightarrow[f]{} \bar{x}}|\nabla f|(x),
$$ where the convergence $x\xrightarrow[f]{} \bar{x}$ means $(x,f(x))\to (\bar{x},f(\bar{x}))$.}
\end{definition}

Slope allows us to define generalized critical points.
\smallskip
\begin{definition}[Lower-critical points]
{\rm 
Consider a function $f\colon\R^n\to\overline{\R}$. We will call any point $\bar{x}$ satisfying $\slof (\bar{x})=0$ a {\em lower-critical point} of $f$.}
\end{definition}
\smallskip

For ${\bf C}^1$-smooth functions $f$ on a Hilbert space, both $\slof(\bar{x})$ and $|\nabla f|(\bar{x})$ simply coincide with the norm of the gradient of $f$ at $\bar{x}$, and hence the notation. In particular, lower-critical points of such functions are critical points in the classical sense.

\smallskip
\begin{proposition}[Slope of a composition]\label{prop:slope_param}
Consider a lsc function $f\colon[a,b]\to\overline{\R}$ and a nondecreasing continuous function $s\colon [c,d]\to [a,b]$. Suppose that $s$ is differentiable at a point $t\in (c,d)$ with $s'(t)\neq 0$.
Then the equality
$$|\nabla (f\circ s)|(t)=|\nabla f|(s(t))\cdot |s'(t)|, \quad\quad \textrm{ holds}.$$
\end{proposition}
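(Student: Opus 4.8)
The plan is to establish the two inequalities $\le$ and $\ge$ separately, both organized around a single algebraic identity. Write $L:=s'(t)$; since $s$ is nondecreasing we have $L\ge 0$, and the hypothesis $s'(t)\neq 0$ gives $L>0$, so $|s'(t)|=L$. Observe first that $s(t)$ is necessarily an interior point of $[a,b]$: were $s(t)=a$, monotonicity would force $s$ to be constant on a left neighborhood of $t$, contradicting $L>0$ (and symmetrically for $b$). The governing identity is that, whenever $s(\tau)\neq s(t)$,
\[
\frac{\big(f(s(t))-f(s(\tau))\big)^+}{|t-\tau|}=\frac{\big(f(s(t))-f(s(\tau))\big)^+}{|s(t)-s(\tau)|}\cdot\frac{|s(t)-s(\tau)|}{|t-\tau|},
\]
in which the last factor tends to $L$ as $\tau\to t$ by differentiability of $s$.

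For the bound $|\nabla(f\circ s)|(t)\le |\nabla f|(s(t))\cdot L$, I would extract a sequence $\tau_k\to t$, $\tau_k\neq t$, along which the difference quotient defining $|\nabla(f\circ s)|(t)$ converges to that slope. If the limit is $0$ there is nothing to prove; otherwise the numerators are eventually positive, forcing $f(s(\tau_k))<f(s(t))$ and in particular $s(\tau_k)\neq s(t)$, so the identity applies. Since $s(\tau_k)\to s(t)$ with $s(\tau_k)\neq s(t)$, the first factor has limsup at most $|\nabla f|(s(t))$, while the second tends to $L>0$; passing to the limit in the product yields the claim.

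The reverse inequality $|\nabla(f\circ s)|(t)\ge |\nabla f|(s(t))\cdot L$ is the crux. Here I would start from a sequence $x_k\to s(t)$, $x_k\neq s(t)$, realizing $|\nabla f|(s(t))$ (assuming this slope is nonzero, else the bound is trivial), and \emph{pull it back} through $s$. The key structural fact, from $L>0$, is that for all $\tau$ in a punctured neighborhood of $t$ the sign of $s(\tau)-s(t)$ matches that of $\tau-t$; combined with continuity this makes $s$ map a neighborhood of $t$ onto an interval containing $s(t)$ in its interior. Thus for large $k$ the intermediate value theorem provides $\tau_k$ with $s(\tau_k)=x_k$; choosing $\tau_k$ as the preimage closest to $t$ on the appropriate side and invoking monotonicity together with $x_k\to s(t)$ forces $\tau_k\to t$. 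Then $f(s(\tau_k))=f(x_k)$, the identity applies with the first factor tending to $|\nabla f|(s(t))$ and the second to $L$, and since $|\nabla(f\circ s)|(t)$ dominates the limit along the particular sequence $\tau_k$, the bound follows.

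The main obstacle is exactly this pull-back step: the composition can only probe $f$ along the image of $s$, so one must verify that $s$ surjects onto a full neighborhood of $s(t)$ and that preimages of $x_k$ can be selected converging to $t$. Both hinge on the nondegeneracy $s'(t)\neq 0$ and on monotonicity; without them the lower bound can fail. The remaining care is bookkeeping for the extended-real values: if $|\nabla f|(s(t))=+\infty$ the reverse inequality gives $|\nabla(f\circ s)|(t)=+\infty$ (as $L>0$), and the degenerate cases where either slope vanishes are immediate.
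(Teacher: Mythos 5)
Your proof is correct and follows essentially the same route as the paper's: the same factorization of the difference quotient, with the same key structural fact that $s'(t)\neq 0$ plus monotonicity and continuity make $s$ locally open (surjective onto a neighborhood of $s(t)$) near $t$. The paper simply asserts local openness and writes the resulting chain of equalities in one line, whereas you carefully justify it via the intermediate value theorem and split the argument into the two inequalities --- a faithful, fleshed-out version of the same proof.
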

\begin{proof}
First, since $s$ is nondecreasing, continuous, and satisfies $s'(t)\neq 0$, we deduce that $s$ is locally open near $t$. Taking this into account, we deduce the chain of equalities
\begin{align*}
|\nabla (f\circ s)|(t)&=\ls_{\tau \to t} \frac{(f(s(t))-f(s(\tau)))^{+}}{|t-\tau|}\\
&=\ls_{\tau\to t} \frac{(f(s(t))-f(s(\tau)))^{+}}{|s(t)-s(\tau)|}\cdot\frac{|s(t)-s(\tau)|}{|t-\tau|}= |\nabla f|(s(t))\cdot |s'(t)|,
\end{align*}
thereby establishing the claimed result.
\end{proof}

\smallskip
We record below the celebrated Ekeland's variational principle.
\smallskip

\begin{theorem}[Ekeland's variational principle]\label{thm:ek} 
Consider a lsc function $g\colon\X\to\overline{\R}$ that is bounded from below. Suppose that for some $\epsilon >0$ and $x\in \R^n$, we have $g(x)\leq\inf f +\epsilon$. Then for any $\rho >0$, there exists a point $\bar{u}$ satisfying 
\begin{itemize}
\item $g(\bar{u})\leq g(x)$,
\item $d(\bar{u},x)\leq \rho^{-1}\epsilon$, and
\item $g(u)+\rho\, d(u,\bar{u}) > g(\bar{u}), \quad\textrm{ for all } u\in \X\setminus\{\bar{u}\}.$
\end{itemize}
\end{theorem}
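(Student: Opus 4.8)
The plan is to deduce the theorem from the existence of a \emph{minimal} point with respect to a suitable partial order, which is where completeness of $\X$ enters decisively. On $\dom g$ I define the relation $u\preceq v$ to mean $g(u)+\rho\, d(u,v)\leq g(v)$. Reflexivity is immediate, antisymmetry follows because $u\preceq v$ and $v\preceq u$ together force $d(u,v)\leq 0$, and transitivity is exactly the triangle inequality for $d$ combined with adding the two defining inequalities. The point of this ordering is twofold: a point $\bar u$ is \emph{minimal} (no $u\neq\bar u$ satisfies $u\preceq\bar u$) precisely when the third, strict, conclusion holds at $\bar u$; and if moreover $\bar u\preceq x$, then $g(\bar u)+\rho\, d(\bar u,x)\leq g(x)$ yields at once $g(\bar u)\leq g(x)$ and, using $g(x)\leq\inf g+\epsilon$, the estimate $d(\bar u,x)\leq\rho^{-1}\epsilon$. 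Thus everything reduces to producing a point $\bar u\preceq x$ that is minimal for $\preceq$. (One could abstract this step as an application of Zorn's lemma, but a sequential construction is more transparent and avoids delicate chain arguments.)

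To construct such a point I would build a decreasing sequence by a greedy, near-minimizing selection. Set $u_0:=x$, and given $u_n$ consider the lower section
$$
F_n:=\{u\in\X: u\preceq u_n\}=\{u\in\X: g(u)+\rho\, d(u,u_n)\leq g(u_n)\}.
$$
Each $F_n$ is nonempty (it contains $u_n$) and closed, the latter because $g$ is lsc and $d(\cdot,u_n)$ is continuous. I then choose $u_{n+1}\in F_n$ with $g(u_{n+1})\leq\tfrac12\big(g(u_n)+\inf_{F_n} g\big)$, which is possible since $g$ is bounded below, so that $\inf_{F_n}g$ is finite. Transitivity gives $F_{n+1}\subseteq F_n$, so the sections are nested.

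The crucial step is to show that the diameters of the $F_n$ shrink to zero. For any $u\in F_n$ one has $\rho\, d(u,u_n)\leq g(u_n)-g(u)\leq g(u_n)-\inf_{F_n}g$, so it suffices to control the gap $b_n:=g(u_n)-\inf_{F_n}g$. The near-minimizing choice, together with $\inf_{F_{n+1}}g\geq\inf_{F_n}g$, forces $b_{n+1}\leq\tfrac12 b_n$, whence $b_n\leq 2^{-n}b_0\to 0$ and therefore $\mathrm{diam}\,F_n\leq 2\rho^{-1}b_n\to 0$. This is the only place where the quantitative selection rule is used, and pinning down the halving of the gap $b_n$ is the main technical obstacle. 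By the Cantor intersection theorem, applicable because $\X$ is complete, the nested closed sets with vanishing diameters meet in a single point $\bar u$.

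Finally I would verify that $\bar u$ has the required properties. Since $\bar u\in F_0$ we have $\bar u\preceq x$, which, as noted in the first paragraph, delivers the first two conclusions. For minimality, suppose $v\preceq\bar u$. Because $\bar u\in F_n$ means $\bar u\preceq u_n$, transitivity gives $v\preceq u_n$, that is $v\in F_n$, for every $n$; hence $v\in\bigcap_n F_n=\{\bar u\}$ and so $v=\bar u$. Thus no point distinct from $\bar u$ lies below it, which is exactly the third conclusion, completing the argument.
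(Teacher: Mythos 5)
Your proof is correct, but there is nothing in the paper to compare it against: the paper merely \emph{records} Ekeland's variational principle as a classical result (modulo two typos in the statement, $\inf f$ for $\inf g$ and $x\in\R^n$ for $x\in\X$) and never proves it. Your argument is the standard order-theoretic proof and all the key steps check out: the relation $u\preceq v \iff g(u)+\rho\, d(u,v)\le g(v)$ is a genuine partial order on $\dom g$ (antisymmetry by adding the two inequalities, transitivity by the triangle inequality); each lower section $F_n$ is nonempty and closed because $u\mapsto g(u)+\rho\, d(u,u_n)$ is lsc; the halving rule really does give $b_{n+1}\le\tfrac12 b_n$, since $g(u_{n+1})\le\tfrac12\bigl(g(u_n)+\inf_{F_n}g\bigr)$ and $\inf_{F_{n+1}}g\ge\inf_{F_n}g$; and completeness enters exactly once, via the Cantor intersection theorem, which is where the proof genuinely needs the hypothesis the paper emphasizes. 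Two small points are worth making explicit. First, the existence of the near-minimizing choice $u_{n+1}$ needs a sentence: $\inf_{F_n}g$ is finite since $g$ is bounded below, and if $\inf_{F_n}g=g(u_n)$ one may take $u_{n+1}=u_n$, while otherwise the midpoint lies strictly above the infimum so a suitable point exists. Second, the third conclusion quantifies over all $u\in\X\setminus\{\bar u\}$, whereas your minimality argument lives on $\dom g$; for $u\notin\dom g$ the strict inequality is trivial because $g(u)=+\infty$ while $g(\bar u)\le g(x)<+\infty$. With those remarks added, this is a complete, self-contained proof of a statement the paper quotes without proof, and it would serve as a legitimate replacement for the citation.
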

\bigskip
The following consequence of Ekeland's variational principle will play a crucial role in our work \cite[Basic Lemma, Chapter 1]{ioffe_survey}. We provide a proof for completeness.

\smallskip
\begin{lemma}[Error bound]\label{blemma}
Consider a lsc function $f\colon \X\to\overline{\R}$. Assume that for some point $x\in \dom f$,
there are constants $\al< f(x)$ and $r,K>0$ so that the implication 
$$\al<f(u)\le f(x) \quad\textrm{ and }\quad d(u,x)\le K\quad\Longrightarrow\quad |\nabla f|(u)\ge r, \quad\textrm{ holds}.$$
If in addition the inequality $f(x)-\al<Kr$ is valid, then the sublevel set $[f\le\al]$ is nonempty and we have the estimate
$d(x,[f\le\al])\le r^{-1}(f(x)-\al)$.
\end{lemma}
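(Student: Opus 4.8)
The plan is to derive the error bound from Ekeland's variational principle (Theorem~\ref{thm:ek}), applied to the function $f$ restricted appropriately so that the variational principle produces a point witnessing either membership in the sublevel set $[f\le\al]$ or a small slope --- the latter being ruled out by hypothesis.

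First I would set up a contradiction argument. Suppose the conclusion fails, so that either $[f\le\al]$ is empty or $d(x,[f\le\al])>r^{-1}(f(x)-\al)$. Let me write $\e:=f(x)-\al$, which is positive by assumption, and note that the key quantitative hypothesis is $\e<Kr$. The natural move is to apply Ekeland's principle with a penalty parameter $\rho$ chosen just below $r$; specifically, I would pick $\rho$ with $\e/K<\rho<r$, which is possible precisely because $\e<Kr$. Applying Theorem~\ref{thm:ek} to $f$ (after checking or arranging that $f$ is bounded below near $x$, or truncating at level $\al$ to handle the general case), starting from the point $x$, yields a point $\ub$ satisfying $f(\ub)\le f(x)$, the distance bound $d(\ub,x)\le\rho^{-1}\e$, and the strong minimality condition $f(u)+\rho\,d(u,\ub)>f(\ub)$ for all $u\ne\ub$.

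The crux is then to exploit this strong minimality condition. Rearranging it gives $(f(\ub)-f(u))^+<\rho\,d(u,\ub)$ for every $u\ne\ub$ in the domain, and dividing by $d(u,\ub)$ and taking the limit superior as $u\to\ub$ shows immediately that $|\nabla f|(\ub)\le\rho<r$. Meanwhile, from $d(\ub,x)\le\rho^{-1}\e<K$ (using $\rho>\e/K$) I get that $\ub$ lies within distance $K$ of $x$. The remaining thing to check is that $\al<f(\ub)\le f(x)$: the upper bound is immediate from Ekeland, and the lower bound $\al<f(\ub)$ is exactly what the assumption that $[f\le\al]$ is empty (or that $\ub$ fails to be in the sublevel set) forces. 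With all three conditions in hand, the hypothesized implication yields $|\nabla f|(\ub)\ge r$, contradicting $|\nabla f|(\ub)<r$. This contradiction establishes that $[f\le\al]$ is nonempty and that the distance estimate holds.

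The main obstacle I anticipate is handling the boundary cases cleanly and getting the distance estimate (not merely nonemptiness) with the sharp constant $r^{-1}\e$. The nonemptiness follows from letting $\rho\uparrow r$, but to extract the precise bound $d(x,[f\le\al])\le r^{-1}\e$ I would instead argue that whenever $f(\ub)>\al$ we reach a contradiction, hence the Ekeland point must satisfy $f(\ub)\le\al$, placing $\ub\in[f\le\al]$ with $d(x,\ub)\le\rho^{-1}\e$; letting $\rho\uparrow r$ then gives the claimed estimate. A secondary technical point is ensuring boundedness from below so that Ekeland applies --- this is handled by replacing $f$ with $\max\{f,\al\}$ or by restricting attention to the relevant sublevel set, and one should verify this substitution does not corrupt the slope computation at points where $f>\al$.
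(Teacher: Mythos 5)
Your proposal is correct and is essentially the paper's own proof: the paper applies Ekeland's principle to the truncation $g=(f-\al)^+$ with a parameter $\rho<r$ chosen so that $f(x)-\al<K\rho$, deduces $|\nabla g|(\ub)\le\rho$ at the Ekeland point, rules out $g(\ub)>0$ via the slope hypothesis, and lets $\rho\uparrow r$ --- exactly your restructured argument. One caution on the two points you flagged: the truncation is needed not merely for boundedness below but so that $x$ is an $\e$-minimizer (with $\e=f(x)-\al$) of the truncated function, since otherwise Ekeland does not yield the distance bound $\rho^{-1}\e$; and the identification $|\nabla g|(\ub)=|\nabla f|(\ub)$ when $f(\ub)>\al$ follows from lower semicontinuity of $f$, which forces $f>\al$, hence $g=f-\al$, on a neighborhood of $\ub$.
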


\proof  Define a lsc function $g\colon\X\to\overline{\R}$ by setting $g(u) := (f(u)-\al)^+$, and choose a real number $\rho<r$ satisfying $f(x)-\al<K\rho$. 
By Ekeland's principle (Theorem~\ref{thm:ek}), there exists a point $\ub$
satisfying 
$$g(\ub)\le g(x),\quad d(\ub,x)\le \rho^{-1}g(x)\le K$$  and $$g(u)+\rho\, d(u,\ub) \ge g(\ub), \quad\textrm{ for all } u.$$
Consequently we obtain the inequality $|\nabla g|(\ub)\le \rho$.
On the other hand, a simple computation shows that this can happen only provided $g(\ub)=0$, for otherwise we would have $|\nabla g|(\ub)=|\nabla f|(\ub)\ge r$.
Hence $\bar{u}$ lies in the level set $[f\le\al]$, and we obtain the estimate $d(x,[f\le\al])\le \rho^{-1}(f(x)-\al)$.
The result now follows by taking $\rho$ arbitrarily close to (and still smaller than) $r$.
\endproof

\subsection{Absolute continuity and the metric derivative}
In this section, we adhere closely to the notation and the development in \cite{grad_flows}.
\smallskip
\begin{definition}[Absolutely continuous curves]
{\rm
Consider a curve $\gamma\colon (a,b)\to \X$. We will say that $\gamma$ is {\em absolutely continuous}, denoted $\gamma\in AC(a,b,\X)$, provided that there exists an integrable function $m\colon (a,b)\to\R$ satisfying 
\begin{equation}\label{eqn:met_dir}
d(\gamma(s),\gamma(t))\leq \int^t_s m(\tau)\; d\tau, \quad \textrm{ whenever } a<s\leq t<b.
\end{equation}}
\end{definition}

Every curve $\gamma\in AC(a,b,\X)$ is uniformly continuous. Moreover, the right and left limits of $\gamma$, denoted respectively by $\gamma(a)$ and $\gamma(b)$, exist. There is a canonical choice for the integrand appearing in the definition of absolute continuity, namely the ``metric derivative''.
\smallskip

\begin{definition}[Metric derivative]
{\rm
For any curve $\gamma\colon [a,b]\to\X$ and any $t\in (a,b)$, the quantity
$$\|\dot{\gamma}(t)\|:=\lim_{s\to t} \frac{d(v(s),v(t))}{|s-t|},$$
if it exists, is the {\em metric derivative} of $\gamma$ at $t$. If this limit does exist at $t$, then we will say that $\gamma$ is {\em metrically differentiable} at $t$.}
\end{definition}
\smallskip

Some comments concerning our notation are in order, since it deviates slightly from that used in the standard monograph on the subject \cite{grad_flows}. The notation $\|\dot{\gamma}(t)\|$ is natural, since whenever $\gamma$ is a differentiable curve into a Hilbert space, the metric derivative is simply the norm of its derivative. This abuse of notation should not cause confusion in what follows.

For any curve $\gamma\in AC(a,b,\X)$, the metric derivative exists almost everywhere on $(a,b)$. Moreover the function $t\mapsto \|\dot{\gamma}(t)\|$ is integrable on $(a,b)$ and is an admissible integrand in inequality (\ref{eqn:met_dir}). In fact, as far as such integrands are concerned, the metric derivative is in a sense minimal. Namely, for any admissible integrand $m\colon (a,b)\to\R$ for the right-hand-side of (\ref{eqn:met_dir}), the inequality
$$\|\dot{\gamma}(t)\|\leq m(t) \quad\textrm{ holds for a.e. } t\in (a,b).$$
See \cite[Theorem 1.1.2]{grad_flows} for more details. We can now define the {\em length} of any absolutely continuous curve $\gamma\in AC(a,b,\X)$ by the formula 
$$\len(\gamma):=\int^b_a  \|\dot{\gamma}(\tau)\|\; d\tau.$$
We adopt the following convention with respect to curve reparametrizations. 
\smallskip
\begin{definition}[Curve reparametrization]
{\rm
Consider a curve $\gamma\colon [a,b]\to\X$. Then any curve $\omega\colon [c,d]\to \X$ is a {\em reparametrization} of $\gamma$ whenever there exists a nondecreasing absolutely continuous function $s\colon [c,d]\to [a,b]$ with $s(c)=a$, $s(d)=b$, and satisfying $\omega=\gamma\circ s$.
}
\end{definition}

\smallskip
Absolutely continuous curves can always be parametrized by arclength. See for example \cite[Lemma 1.1.4]{grad_flows} or \cite[Proposition 2.5.9]{met_geo}.
\smallskip

\begin{theorem}[Arclength parametrization]\label{thm:arc_param}
Consider an absolutely continuous curve $\gamma\in AC(a,b,\X)$, and denote its length by $L=\len(\gamma)$. Then there exists a nondecreasing absolutely continuous map $s\colon [a,b]\to [0,L]$ with $s(a)=0$ and $s(b)=L$, and a 1-Lipschitz curve $v\colon [0,L]\to\X$ satisfying 
$$\gamma=v\circ s \quad \textrm{ and }\quad  \|\dot{v}\|=1 \textrm{ a.e. in } [0,L].$$
\end{theorem}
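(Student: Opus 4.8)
The plan is to build the reparametrization out of the arclength function and then to collapse the time intervals on which $\gamma$ stalls. First I would set
$$s(t):=\int_a^t \|\dot\gamma(\tau)\|\,d\tau,$$
which is legitimate because the metric derivative is integrable. As the integrand is nonnegative, $s$ is nondecreasing, and being an indefinite Lebesgue integral it is absolutely continuous with $s(a)=0$ and $s(b)=L$; by continuity $\rge s=[0,L]$.

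The key observation is that $\gamma$ is constant on each level set of $s$: if $s(t_1)=s(t_2)$ with $t_1\le t_2$, then $\int_{t_1}^{t_2}\|\dot\gamma\|=0$, and the defining inequality (\ref{eqn:met_dir}) of absolute continuity forces $d(\gamma(t_1),\gamma(t_2))=0$. This legitimizes the definition $v(\sigma):=\gamma(t)$ for any $t$ with $s(t)=\sigma$; since $\rge s=[0,L]$, the curve $v\colon[0,L]\to\X$ is then defined everywhere and satisfies $\gamma=v\circ s$ by construction. To see that $v$ is $1$-Lipschitz, take $\sigma_i=s(t_i)$ with $t_1\le t_2$ and estimate
$$d(v(\sigma_1),v(\sigma_2))=d(\gamma(t_1),\gamma(t_2))\le \int_{t_1}^{t_2}\|\dot\gamma\|=\sigma_2-\sigma_1=|\sigma_2-\sigma_1|.$$
In particular $v$ is absolutely continuous with $\|\dot v\|\le 1$ almost everywhere.

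It remains to upgrade this to $\|\dot v\|=1$ a.e., and here I would compute $\int_0^L\|\dot v\|$ in two ways. On one hand $\|\dot v\|\le 1$ gives $\int_0^L\|\dot v\|\,d\sigma\le L$. On the other hand, the fundamental theorem of calculus gives $s'(t)=\|\dot\gamma(t)\|$ for a.e.\ $t$, and a chain rule for the metric derivative under the monotone reparametrization $s$ (argued exactly as in Proposition~\ref{prop:slope_param}, noting that wherever $s'(t)=0$ one automatically has $\|\dot\gamma(t)\|=0$) yields $\|\dot\gamma(t)\|=\|\dot v\|(s(t))\,s'(t)$ a.e. Integrating and changing variables through the monotone absolutely continuous map $s$ then produces
$$L=\int_a^b\|\dot\gamma(t)\|\,dt=\int_a^b\|\dot v\|(s(t))\,s'(t)\,dt=\int_0^L\|\dot v\|(\sigma)\,d\sigma.$$
Combining $\int_0^L\|\dot v\|=L$ with $\|\dot v\|\le 1$ a.e.\ forces $\|\dot v\|=1$ a.e., as required.

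The construction of $s$ and $v$ and the Lipschitz bound are routine; I expect the genuine obstacle to lie in the final paragraph, namely justifying the metric chain rule together with the change of variables on the whole set of times. The delicate point is controlling the behaviour on the (possibly positive-measure) set where $s$ is locally constant, so that no length is lost when transferring from $[a,b]$ to $[0,L]$.
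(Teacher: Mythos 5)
The paper contains no proof of Theorem~\ref{thm:arc_param} to compare against: it is stated as a known result, with the proof deferred to \cite[Lemma 1.1.4]{grad_flows} and \cite[Proposition 2.5.9]{met_geo}. Your argument is essentially the proof from the first of those references, and its strategy is sound: the arclength integral $s$, the observation that $\gamma$ is constant on level sets of $s$ (so that $v$ is well defined, $\gamma=v\circ s$, and $v$ is $1$-Lipschitz), and finally the integral identity $\int_0^L\|\dot{v}\|\,d\sigma=L$ combined with $\|\dot{v}\|\le 1$ a.e.

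The one step that still needs an argument is slightly different from the one you flag. To get $\|\dot{\gamma}(t)\|=\|\dot{v}\|(s(t))\,s'(t)$ a.e., the chain rule (Proposition~\ref{prop:chain} of the paper is the relevant statement here, rather than Proposition~\ref{prop:slope_param}) requires $v$ to be metrically differentiable at $s(t)$. The set $N\subset[0,L]$ where $v$ fails to be metrically differentiable is null, but $s^{-1}(N)$ may have positive measure. Your parenthetical fix --- that $s'(t)=0$ forces $\|\dot{\gamma}(t)\|=0$, which is pointwise correct --- disposes of the points of $s^{-1}(N)$ at which $s'$ vanishes, but not of the set $\{t:\; s'(t)>0\}\cap s^{-1}(N)$. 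What closes this is the standard lemma that a nondecreasing absolutely continuous function has vanishing derivative almost everywhere on the preimage of any Lebesgue-null set: granting it, the exceptional set is null, the a.e.\ chain-rule identity holds (with the convention that the right-hand side is $0$ whenever $s'(t)=0$), and the same lemma underwrites the change-of-variables formula $\int_a^b\|\dot{v}\|(s(t))\,s'(t)\,dt=\int_0^L\|\dot{v}\|(\sigma)\,d\sigma$ for the monotone AC map $s$. Alternatively, you can bypass this measure theory entirely by using that the length of an absolutely continuous curve equals its total variation over partitions, a quantity manifestly invariant under the monotone surjective reparametrization $s$; this gives $\int_0^L\|\dot{v}\|\,d\sigma=\len(v)=\len(\gamma)=L$ directly. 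Either way the gap is standard and fixable, so your proposal is correct in substance.
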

\begin{proposition}[Metric derivative of a composition]\label{prop:chain}{\\}
Consider a curve $\gamma\colon [0,L]\to \X$ and a continuous function $s\colon [a,b]\to [0,L]$. Consider a point $t\in (a,b)$, so that $s$ is differentiable at $t$ and $\gamma$ is metrically differentiable at $s(t)$. Then the curve $\gamma\circ s$ is metrically differentiable at $t$ with metric derivative $\|\dot{\gamma}(s(t))\|\cdot |s'(t)|$.
\end{proposition}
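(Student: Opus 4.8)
The plan is to unwind the definition of the metric derivative and show directly that
$\ls_{\tau\to t}\frac{d(\gamma(s(\tau)),\gamma(s(t)))}{|\tau-t|}$ and $\lf_{\tau\to t}\frac{d(\gamma(s(\tau)),\gamma(s(t)))}{|\tau-t|}$ both equal $\|\dot{\gamma}(s(t))\|\cdot|s'(t)|$, via a squeezing argument. The naive route would mimic the proof of Proposition~\ref{prop:slope_param}, factoring the difference quotient as
$\frac{d(\gamma(s(\tau)),\gamma(s(t)))}{|s(\tau)-s(t)|}\cdot\frac{|s(\tau)-s(t)|}{|\tau-t|}$ and letting each factor converge separately. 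I would deliberately avoid this here, since we do not assume $s'(t)\neq 0$ and $s$ need not be locally injective at $t$; the denominator $|s(\tau)-s(t)|$ may vanish for $\tau$ arbitrarily close to $t$, so the factorization is not available. Instead I would keep $|\tau-t|$ as the sole denominator throughout.

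The first substantive step is to convert metric differentiability of $\gamma$ at $s(t)$ into two-sided linear estimates on a neighborhood. Writing $\ell:=\|\dot{\gamma}(s(t))\|$, fix $\epsilon>0$. By definition there is $\delta>0$ so that $0<|\sigma-s(t)|<\delta$ implies $(\ell-\epsilon)|\sigma-s(t)|\le d(\gamma(\sigma),\gamma(s(t)))\le (\ell+\epsilon)|\sigma-s(t)|$, and crucially these inequalities persist at $\sigma=s(t)$, where all three quantities equal zero. Thus the bounds hold for every $\sigma$ with $|\sigma-s(t)|<\delta$, with no exception made for the degenerate case. This is precisely the point that rescues the argument when $s(\tau)=s(t)$.

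Next I would use continuity of $s$ at $t$ to pick $\eta>0$ with $|\tau-t|<\eta\Rightarrow|s(\tau)-s(t)|<\delta$, substitute $\sigma=s(\tau)$ into the estimates above, and divide through by $|\tau-t|$, giving $(\ell-\epsilon)\frac{|s(\tau)-s(t)|}{|\tau-t|}\le\frac{d(\gamma(s(\tau)),\gamma(s(t)))}{|\tau-t|}\le(\ell+\epsilon)\frac{|s(\tau)-s(t)|}{|\tau-t|}$ for all such $\tau\neq t$. Letting $\tau\to t$ and invoking differentiability of $s$ at $t$, namely $\frac{|s(\tau)-s(t)|}{|\tau-t|}\to|s'(t)|$, the squeeze yields $(\ell-\epsilon)|s'(t)|\le\lf_{\tau\to t}\frac{d(\gamma(s(\tau)),\gamma(s(t)))}{|\tau-t|}\le\ls_{\tau\to t}\frac{d(\gamma(s(\tau)),\gamma(s(t)))}{|\tau-t|}\le(\ell+\epsilon)|s'(t)|$. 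Since $\epsilon>0$ was arbitrary, the limit exists and equals $\ell\,|s'(t)|$, establishing both metric differentiability of $\gamma\circ s$ at $t$ and the claimed value. The only delicate point is the one flagged above: handling $s(\tau)=s(t)$ (and, correspondingly, the case $s'(t)=0$, where the bound forces the metric derivative to vanish). Once the two-sided estimates are seen to hold trivially in that situation, the rest is a routine sandwich, so I expect no further obstacle.
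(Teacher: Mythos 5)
Your proof is correct, but it is organized differently from the paper's. The paper \emph{does} use the factorization $\frac{d(\gamma(s(t_i)),\gamma(s(t)))}{|s(t_i)-s(t)|}\cdot\frac{|s(t_i)-s(t)|}{|t_i-t|}$ that you deliberately avoid: it handles the non-injectivity issue by a case split on sequences, considering separately sequences $t_i\to t$ with $s(t_i)=s(t)$ for all $i$ (where the difference quotient is identically zero, and --- implicitly because differentiability of $s$ along such a sequence forces $s'(t)=0$ --- this zero limit agrees with $\|\dot{\gamma}(s(t))\|\cdot|s'(t)|$) and sequences with $s(t_i)\neq s(t)$ for all $i$ (where the factorization is legitimate); since every sequence splits into subsequences of these two types and both yield the same limit, the conclusion follows. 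Your route replaces this case analysis with a uniform two-sided estimate $(\ell-\epsilon)|\sigma-s(t)|\le d(\gamma(\sigma),\gamma(s(t)))\le(\ell+\epsilon)|\sigma-s(t)|$, valid including at $\sigma=s(t)$, followed by a squeeze against $\frac{|s(\tau)-s(t)|}{|\tau-t|}\to|s'(t)|$. What your version buys is that the degenerate case $s(\tau)=s(t)$ (and the case $s'(t)=0$) requires no separate treatment at all, and the step the paper leaves tacit --- why the zero limit matches the claimed formula --- never arises; the only mild caution is that when $\ell-\epsilon<0$ the lower estimate is vacuous, which is harmless since the difference quotient is nonnegative. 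What the paper's version buys is brevity and a direct parallel with the proof of Proposition~\ref{prop:slope_param}. Your claim that the factorization ``is not available'' is slightly overstated --- it is available along subsequences avoiding the degenerate values, which is exactly how the paper uses it --- but your argument stands on its own and is, if anything, the more robust of the two.
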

\begin{proof}
Observe that for any sequence of points $t_i\to t$ with $t_i\neq t$ and $s(t_i)=s(t)$ for each index $i$, we have
$$\lim_{i\to\infty} \frac{d(\gamma(s(t_i)),\gamma(s(t)))}{|t_i-t|}=0=\|\dot{\gamma}(s(t))\|\cdot |s'(t)|.$$
On the other hand, for any sequence $t_i\to t$ with $t_i\neq t$ and $s(t_i)\neq s(t)$ for each index $i$, we have
$$\lim_{i\to\infty} \frac{d(\gamma(s(t_i)),\gamma(s(t)))}{|t_i-t|}=\lim_{i\to\infty} \frac{d(\gamma(s(t_i)),\gamma(s(t)))}{|s(t_i)-s(t)|}\cdot\frac{|s(t_i)-s(t)|}{|t_i-t|}=\|\dot{\gamma}(s(t))\|\cdot |s'(t)|.$$ The result follows.
\end{proof}

\smallskip
The following is a Sard type theorem for real-valued functions of one variable. See for example \cite[Fundamental Lemma]{absol_sard}.
\smallskip

\begin{theorem}[Sard theorem for functions of one variable]\label{thm_sard}
For any function $s\colon[a,b]\to\R$, the set  
$$\{t\in [a,b]: \exists \tau\in s^{-1}(t)\textrm{ with } s'(t)=0\}$$
is Lebesgue null.
\end{theorem}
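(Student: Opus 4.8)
The plan is to read the displayed set as the set of \emph{critical values} of $s$: writing $C:=\{\tau\in[a,b]:s'(\tau)\text{ exists and equals }0\}$, the set in question is exactly the image $s(C)$ (the condition $s'(t)=0$ is to be read as $s'(\tau)=0$). Thus the goal is to prove that $s(C)$ is Lebesgue null. The essential difficulty is that $s$ is assumed differentiable \emph{only} at the points of $C$, with no global regularity whatsoever---not even continuity off $C$---so the classical smooth Sard theorem is unavailable, and all measure control must be squeezed out of the pointwise condition $s'(\tau)=0$.

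First I would fix $\epsilon>0$ and record the local linear estimate that differentiability with vanishing derivative provides: for each $\tau\in C$ there is a radius $\delta_\tau>0$ with
\[
|s(y)-s(\tau)|\le \epsilon\,|y-\tau|\qquad\text{whenever }|y-\tau|\le \delta_\tau .
\]
The point of this inequality is that $s$ contracts small neighbourhoods of critical points, sending an interval of length $\ell$ around $\tau\in C$ into an interval of length at most $2\epsilon\ell$. Since the radius $\delta_\tau$ depends on $\tau$, the next step is to uniformize it by slicing $C$ into the increasing family
\[
C_n:=\{\tau\in C:\ |s(y)-s(\tau)|\le \epsilon\,|y-\tau|\ \text{ for all }y\in[a,b]\text{ with }|y-\tau|\le 1/n\},
\]
so that $C=\bigcup_n C_n$ and on each $C_n$ the modulus $1/n$ is common to all points.

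With the radius uniform on $C_n$, I would partition $[a,b]$ into finitely many subintervals $Q_1,\dots,Q_M$ each of length less than $1/n$. For any $Q_j$ meeting $C_n$, fixing some $\tau_j\in Q_j\cap C_n$ forces $|s(y)-s(\tau_j)|\le\epsilon|Q_j|$ for every $y\in Q_j\cap C_n$, whence $s(Q_j\cap C_n)$ sits inside an interval of length $2\epsilon|Q_j|$. Summing the resulting outer-measure bounds over the partition gives
\[
|s(C_n)|^{*}\le \sum_{j}2\epsilon|Q_j|=2\epsilon\,(b-a),
\]
a bound that is crucially independent of $n$. Finally, since $C_n\uparrow C$ and Lebesgue outer measure is continuous from below, $|s(C)|^{*}=\lim_n|s(C_n)|^{*}\le 2\epsilon(b-a)$; letting $\epsilon\downarrow 0$ yields $|s(C)|^{*}=0$.

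I expect the only genuinely delicate step to be the uniformization (the passage from the $\tau$-dependent radius $\delta_\tau$ to the sets $C_n$): it is exactly what allows a single partition of $[a,b]$ to control every contribution at once and, more importantly, to produce a measure bound \emph{uniform in $n$}, which is what makes the final limit legitimate. The remaining bookkeeping is routine, the one point worth verifying being continuity from below of outer measure applied to the increasing (and possibly non-measurable) sets $s(C_n)$, a standard property of Lebesgue outer measure on the line.
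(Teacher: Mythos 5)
Your proof is correct, and it is worth noting that the paper offers no proof of this statement at all: it is quoted from the literature (the ``Fundamental Lemma'' of the cited reference on absolutely continuous functions), so your argument is a genuinely self-contained replacement rather than a variant of anything in the text. Two points of comparison. First, your reading of the statement is the right one: the displayed condition $s'(t)=0$ is a typo for $s'(\tau)=0$, so the set in question is the set of critical \emph{values} $s(C)$, $C=\{\tau : s'(\tau)\text{ exists and }=0\}$; this matches exactly how the theorem is invoked later in the proof of Theorem~\ref{thm:near_steep}, where the exceptional set is written with $s'(\tau)=0$. Second, your argument is the classical assumption-free one-dimensional Sard proof, and each step checks out: differentiability with zero derivative gives the contraction estimate $|s(y)-s(\tau)|\le\epsilon|y-\tau|$ on a $\tau$-dependent radius; the slices $C_n$ uniformize that radius; partitioning $[a,b]$ into intervals of length less than $1/n$ gives $|s(C_n)|^{*}\le 2\epsilon(b-a)$ \emph{uniformly in} $n$; and the limit passage is legitimate because Lebesgue outer measure is indeed continuous from below along arbitrary (possibly non-measurable) increasing sequences --- a true standard fact, proved by replacing each $s(C_n)$ with a measurable hull $H_n$ of equal outer measure and passing to the increasing measurable sets $\bigcap_{m\ge n}H_m$. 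You are also right that this step cannot be waved away by countable subadditivity, since the bound $2\epsilon(b-a)$ is uniform in $n$ but not summable over $n$; flagging that as the delicate point shows you have located exactly where the argument could otherwise break.
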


\smallskip
The following theorem provides a convenient way of determining when strictly monotone, continuous functions are absolutely continuous \cite{sob}.
\smallskip
\begin{theorem}[Inverses of absolutely continuous functions]\label{thm:inv_abs}
Consider a continuous, strictly increasing function $s\colon [a,b]\to \R$. Then the inverse $s^{-1}\colon [s(a),s(b)]\to [a,b]$ is absolutely continuous if and only if the set 
$$E:=\{t\in [a,b]: s'(t)=0\},$$
has Lebesgue measure zero.
\end{theorem}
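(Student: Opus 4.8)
The plan is to reduce the claim to Luzin's condition (N) for the inverse map and then translate that condition into a statement about the critical set of $s$. Write $c=s(a)$, $d=s(b)$ and set $g:=s^{-1}\colon[c,d]\to[a,b]$, which is again continuous and strictly increasing, hence of bounded variation. The crucial classical input is the Banach--Zaretsky characterization: a continuous function of bounded variation is absolutely continuous if and only if it satisfies condition (N), that is, it carries Lebesgue-null sets to Lebesgue-null sets. Thus the problem becomes purely measure-theoretic: $g$ satisfies (N) if and only if $m(E)=0$, where $m$ denotes Lebesgue (outer) measure. Since $s\colon[a,b]\to[c,d]$ is a bijection with inverse $g$, every null set $N\subseteq[c,d]$ has the form $N=s(A)$ with $A=g(N)$; hence property (N) for $g$ is exactly the implication $m(s(A))=0\Rightarrow m(A)=0$ for all $A\subseteq[a,b]$.

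The second ingredient is a pair of standard Vitali-covering estimates for the increasing function $s$: if $s'(t)\le M$ on a set $S$, then $m^{*}(s(S))\le M\,m^{*}(S)$, and if $s'(t)\ge\delta$ on $S$, then $m^{*}(s(S))\ge\delta\,m^{*}(S)$. Applying the first estimate on $E$, where $s'=0\le M$ for every $M>0$, yields $m(s(E))=0$ unconditionally. This already settles one direction: if $m(E)>0$, then $N:=s(E)$ is null while $g(N)=E$ has positive measure, so $g$ violates (N) and is therefore not absolutely continuous.

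For the converse, assume $m(E)=0$, fix $A\subseteq[a,b]$ with $m(s(A))=0$, and aim to show $m(A)=0$. Discarding $E$ together with the set $D$ where $s$ fails to be differentiable (null by Lebesgue's monotone differentiation theorem, and $m(E)=0$ by hypothesis), I reduce to $A'':=A\setminus(E\cup D)$, on which $s'$ exists and is strictly positive, and $m(A'')=m(A)$. Writing $A''$ as the countable union of the sets $S_{n}:=\{t\in A'':s'(t)\ge 1/n\}$ and applying the lower covering estimate on each gives $m^{*}(s(S_{n}))\ge\tfrac1n\,m^{*}(S_{n})$; since $s(S_{n})\subseteq s(A)$ is null, every $S_{n}$ is null, whence $m(A'')=0$ and so $m(A)=0$. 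This establishes (N) for $g$ and completes the equivalence.

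I expect the main obstacle to be organizing the measure-theoretic bookkeeping cleanly rather than any deep difficulty: one must consistently use outer measure so that images of arbitrary (possibly nonmeasurable) sets are handled correctly, and one must invoke Banach--Zaretsky precisely to move between absolute continuity and condition (N). The conceptual heart is the single observation that an increasing $s$ always collapses its critical set $E$ to a null set, so any positive measure concealed in $E$ is exactly what the inverse $g$ would have to re-expand out of a null set, which is precisely what obstructs absolute continuity of $g$.
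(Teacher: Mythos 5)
Your proof is correct. There is nothing in the paper to compare it against: the authors state this theorem as a quoted fact from the literature (the citation accompanying the statement) and give no proof, so your argument is a genuine, self-contained justification of an imported result. The route you take --- Banach--Zaretsky to identify absolute continuity of $g=s^{-1}$ with Luzin's condition (N), together with the two growth estimates ($m^{*}(s(S))\le M\,m^{*}(S)$ when $s'\le M$ on $S$, and $m^{*}(s(S))\ge\delta\,m^{*}(S)$ when $s'\ge\delta$ on $S$) --- is the standard one, and both directions are assembled correctly: the upper estimate with arbitrary $M>0$ shows that $s$ collapses $E$ to a null set, so $m(E)>0$ would force $g$ to expand the null set $s(E)$ into a set of positive measure, violating (N); conversely, when $m(E)=0$, discarding $E$ and the nondifferentiability set $D$ and slicing the remainder into $S_n=\{s'\ge 1/n\}$ turns the lower estimate into condition (N) for $g$. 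Two details are worth making explicit in a full write-up: first, read $E$ as the set of points where $s'$ exists and equals zero, so that by Lebesgue's monotone differentiation theorem $E$ is measurable and the dichotomy $m(E)=0$ versus $m(E)>0$ is legitimate; second, the lower estimate is cleanest when derived from the upper one applied to $g$ itself, using the fact that continuity and strict monotonicity of $s$ make $g$ differentiable at $s(t)$ with $g'(s(t))=1/s'(t)$ whenever $s'(t)>0$.
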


\subsection{Steepest descent and gradient flow}
In this section we consider steepest descent curves in a purely metric setting. To this end, consider a lsc function $f\colon\X\to\R$ and a 1-Lipschitz continuous curve $\gamma\colon [a,b]\to\X$. There are two intuitive requirements that we would like $\gamma$ to satisfy in order to be called a steepest descent curve:
\begin{enumerate}
\item The composition $f\circ\gamma$ is non-increasing,
\item The instantaneous rate of decrease of $f\circ\gamma$ is almost always as great as possible.
\end{enumerate}
To elaborate on the latter requirement, suppose that the composition $f\circ\gamma$ is indeed non-increasing. 
Then taking into account that $\gamma$ is 1-Lipschitz and that monotone functions are differentiable a.e., one can readily verify 
\begin{equation}\label{eqn:descent}
|(f\circ\gamma)'(t)|\leq |\nabla f|(\gamma(t)), \quad\quad \textrm{ for a.e. } t\in (a,b).
\end{equation}
Requiring the inverse inequality to hold amounts to forcing the curve to achieve fastest instantaneous rate of decrease. The discussion above motivates the following definition.
\smallskip

\begin{definition}[Near-steepest descent curves]\label{defn:steep_des}
{\rm
Consider a lsc function $f\colon\X\to\overline{\R}$. Then a 1-Lipschitz continuous curve $\gamma\colon [a,b]\to\X$ is a {\em steepest descent curve} provided that $f\circ \gamma$ is non-increasing and the inequality
$$|(f\circ\gamma)'(t)|\geq |\nabla f|(\gamma(t)),\quad\quad\textrm{ holds for a.e. } t\in [a,b].$$ 
If instead the weaker inequality 
$$|(f\circ\gamma)'(t)|\geq \slof(\gamma(t)),\quad\quad\textrm{ holds for a.e. } t\in [a,b],$$ 
then we will say that $\gamma$ is a {\em near-steepest descent curve}.
}
\end{definition}
\smallskip

In principle, near-steepest descent curves may fall short of achieving true ``steepest descent'', since the analogue of inequality (\ref{eqn:descent}) for the limiting slope may fail to hold in general. Our work, however, will revolve around near-steepest descent curves since the limiting slope is a much better behaved object, and anyway this is common practice in the literature (see for example \cite{grad_flows,eva_first,max_slppe}). The following example illustrates the difference between the two notions.

\smallskip
\begin{example}[Steepest descent vs. near-steepest descent]\label{exa:vs}
{\rm
Consider the function $f\colon\R^2\to\R$ defined by $f(x,y):=-x+\min(y,0)$. Then the curve $x(t)=(t,0)$ is a near-steepest descent curve but is not a steepest descent curve, as one can easily verify.
}
\end{example}
\smallskip

It is often convenient to reparametrise near-steepest descent curves so that their speed is given by the slope. This motivates the following companion notion; related concepts appear in \cite[Section 1.3]{grad_flows},\cite{max_slppe,lec_sav}.


\smallskip
\begin{definition}[Curve of near-maximal slope] \label{defn:cms}
{\rm
Consider a lsc function $f\colon\X\to\overline{\R}$. A curve $\gamma\colon [a,b]\to\X$ is a {\em curve of near-maximal slope} if the following conditions holds:

(a) \ $\gamma$ is absolutely continuous,

(b) \  $\| \dot \gamma(t)\|= \slof(\gamma(t))$ a.e. on $[a,b]$,

(c) \ $f\circ \gamma$ is nonincreasing and satisfies
$$
(f\circ \gamma)'(t)\leq - \big(\slof(\gamma(t))\big)^2 \quad   {\rm a.e. \  on} \;  [a,b].
$$}
\end{definition}
The following proposition shows that, as alluded to above, near-steepest descent curves and curves of near-maximal slope are the same up to reparametrization, provided that a minor integrability condition is satisfied. 
\smallskip
\begin{proposition}[Curves of near-steepest descent \& near-maximal slope]\label{prop:trans}
Consider a lsc function $f\colon\X\to\overline{\R}$ and a near-steepest descent curve $\gamma\colon [a,b]\to \X$. If in addition 
$$(\slof \circ \gamma)^{-1} \textrm{ is integrable and } \slof \circ \gamma \textrm{ is finite a.e. on } [a,b],$$ then there exists a reparametrization of $\gamma$ that is a curve of near-maximal slope. 

Conversely, consider a curve of near-maximal slope $\gamma\colon [a,b]\to \X$. If in addition
$$\slof \circ \gamma \textrm{ is integrable and } (\slof \circ \gamma)^{-1} \textrm{ is finite a.e. on } [a,b],$$ then there exists a reparametrization of $\gamma$ that is a near-steepest descent curve. 
\end{proposition}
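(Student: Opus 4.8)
The plan is to produce each reparametrization in a single step, writing the new curve as $\omega=\gamma\circ s$ where $s$ is the inverse of an explicitly defined, strictly increasing, absolutely continuous function, the integrand defining that function being chosen precisely so that the metric speed of $\omega$ matches the slope. Absolute continuity of $s$ will come from Theorem~\ref{thm:inv_abs}, the chain rule for metric derivatives from Proposition~\ref{prop:chain}, and the transfer of almost-everywhere identities through $s$ from the Sard-type Theorem~\ref{thm_sard} together with the Luzin property of the absolutely continuous maps being inverted.

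For the converse direction, suppose $\gamma$ is a curve of near-maximal slope on $[a,b]$. I would set $\sigma(t):=\int_a^t \slof(\gamma(\tau))\,d\tau$, which is finite by the integrability hypothesis and is exactly the arclength of $\gamma$ by property (b) of Definition~\ref{defn:cms}. Since $(\slof\circ\gamma)^{-1}$ is finite a.e., we have $\slof\circ\gamma>0$ a.e., so $\sigma'=\slof\circ\gamma>0$ a.e.; hence $\sigma$ is strictly increasing, and by Theorem~\ref{thm:inv_abs} its inverse $s:=\sigma^{-1}$ is absolutely continuous. Setting $\omega:=\gamma\circ s$, Proposition~\ref{prop:chain} gives $\|\dot\omega\|=\|\dot\gamma(s)\|\cdot s'=\slof(\gamma(s))\cdot(\slof(\gamma(s)))^{-1}=1$ a.e., so $\omega$ is $1$-Lipschitz; monotonicity of $f\circ\omega$ is inherited from $\gamma$; and the ordinary chain rule combined with property (c) yields $(f\circ\omega)'(\tau)=(f\circ\gamma)'(s(\tau))\,s'(\tau)\le-(\slof(\gamma(s(\tau))))^2\cdot(\slof(\gamma(s(\tau))))^{-1}=-\slof(\omega(\tau))$ a.e., which is exactly the near-steepest descent inequality, with $s$ serving as the required nondecreasing absolutely continuous reparametrizing map.

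For the forward direction, the reparametrization must instead \emph{speed the curve up} to turn its unit-bounded speed into the slope. Here I would use $\phi(t):=\int_a^t \|\dot\gamma(\tau)\|\,(\slof(\gamma(\tau)))^{-1}\,d\tau$, which is finite because the integrand is dominated by the integrable function $(\slof\circ\gamma)^{-1}$. With $s:=\phi^{-1}$ and $\omega:=\gamma\circ s$, Proposition~\ref{prop:chain} again gives $\|\dot\omega\|=\|\dot\gamma(s)\|\cdot s'=\|\dot\gamma(s)\|\cdot(\slof(\gamma(s))/\|\dot\gamma(s)\|)=\slof(\omega)$, which is property (b). The pleasant point is that property (c) is then automatic: since $\gamma$ is $1$-Lipschitz we have $\|\dot\gamma\|\le 1$, and the near-steepest descent inequality gives $-(f\circ\gamma)'\ge\slof\circ\gamma$, so $(f\circ\omega)'(\tau)=(f\circ\gamma)'(s(\tau))\,s'(\tau)\le-\slof(\gamma(s(\tau)))\cdot(\slof(\gamma(s(\tau)))/\|\dot\gamma(s(\tau))\|)\le-(\slof(\omega(\tau)))^2$, using $\|\dot\gamma\|\le 1$ in the last step.

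The main obstacle is hidden in the application of Theorem~\ref{thm:inv_abs} in the forward direction: it requires the set $\{t:\phi'(t)=0\}=\{t:\|\dot\gamma(t)\|=0\}$ to be Lebesgue null, that is, $\gamma$ must be instantaneously moving for almost every $t$. I expect this to be the crux, and I would derive it from the a priori descent estimate behind (\ref{eqn:descent}): refining that estimate to $|(f\circ\gamma)'(t)|\le |\nabla f|(\gamma(t))\cdot\|\dot\gamma(t)\|$ shows that wherever $\gamma$ is stationary the composition $f\circ\gamma$ has zero derivative, whence the near-steepest descent inequality forces $\slof(\gamma(t))=0$; since $\slof\circ\gamma>0$ a.e. by the integrability hypothesis, the stationary set must indeed be null. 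The same circle of ideas then shows $\phi$ is strictly increasing, so that $s$ is well defined, and that the almost-everywhere identities survive composition with $s$, the only delicate point being the measure-zero exceptional sets, which are controlled exactly by Theorem~\ref{thm_sard} and the Luzin property of the absolutely continuous maps $\sigma$ and $\phi$.
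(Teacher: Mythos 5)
Your converse direction is correct and is essentially the paper's own argument (the paper dispatches it with ``follows by similar means,'' using the same time change $\sigma(t)=\int_a^t\slof(\gamma(\tau))\,d\tau$, Theorem~\ref{thm:inv_abs}, Proposition~\ref{prop:chain}, and the Luzin property to push the exceptional null sets through the reparametrization). The problem is the forward direction, and it sits exactly at the point you called the crux. Your argument that $\{t:\|\dot\gamma(t)\|=0\}$ is null rests on the estimate $|(f\circ\gamma)'(t)|\le|\nabla f|(\gamma(t))\cdot\|\dot\gamma(t)\|$, obtained by splitting the difference quotient into (decrease of $f$ per unit distance) times (distance per unit time). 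That splitting is conclusive only when $|\nabla f|(\gamma(t))<\infty$: if the slope is infinite the right-hand side reads $\infty\cdot 0$ and yields nothing. The hypotheses of the proposition control only the \emph{limiting} slope $\slof\circ\gamma$, which, being a liminf over neighboring points, can be finite --- even identically $1$ --- while $|\nabla f|=+\infty$ on a set of positive measure along the curve. So the implication ``$\|\dot\gamma(t)\|=0$ $\Rightarrow$ $(f\circ\gamma)'(t)=0$'' is unjustified precisely where it is needed.

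Moreover the gap cannot be repaired, because the forward statement itself fails without an extra hypothesis. Let $C\subset[0,1]$ be a nowhere dense compact set of positive measure, set $\gamma(t):=|[0,t]\setminus C|$ (a strictly increasing $1$-Lipschitz curve in $\R$ with $\|\dot\gamma\|=0$ a.e.\ on $C$), and let $f:=-\gamma^{-1}$ on $\gamma([0,1])$, extended by constant values to $\R$. Then $f$ is continuous and nonincreasing, $(f\circ\gamma)(t)=-t$, $|\nabla f|(\gamma(t))=+\infty$ for a.e.\ $t\in C$, and yet $\slof\equiv 1$ on the interior of $\gamma([0,1])$, since the points off $\gamma(C)$, where $f$ is locally affine with slope one, are dense. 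Hence $\gamma$ is a near-steepest descent curve satisfying both integrability hypotheses, while its stationary set has measure $|C|>0$. In fact no reparametrization $\omega=\gamma\circ s$ of this curve is a curve of near-maximal slope: since $f\circ\omega=-s$ and $\slof\circ\omega\equiv1$, condition (c) of Definition~\ref{defn:cms} forces $s'\ge1$ a.e.; the area formula for the absolutely continuous map $s$ then shows $s$ cannot carry a positive-measure set of times into the null subset of $C$ where $\gamma'$ fails to exist or to vanish, so $\|\dot\omega\|=0$ a.e.\ on $\{s\in C\}$ and condition (b) forces $|\{s\in C\}|=0$; but off $C$ condition (b) forces $s'=1$ a.e., making $s$ a unit-speed shift and $|\{s\in C\}|=|C|>0$, a contradiction. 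What is missing is an hypothesis such as $\|\dot\gamma\|>0$ a.e.\ (automatic for the curves produced by Theorem~\ref{thm:near_steep}, which satisfy $\|\dot\gamma\|=1$ a.e., but not part of Definition~\ref{defn:steep_des}). Under that hypothesis your construction $\phi(t)=\int_a^t\|\dot\gamma(\tau)\|(\slof(\gamma(\tau)))^{-1}d\tau$ does go through, and is in fact \emph{more} robust than the paper's time change $s(t)=\int_a^t(\slof(\gamma(r)))^{-1}dr$: the paper's deduction of property (b) tacitly uses $\|\dot\gamma\|=1$ a.e., so the published proof glosses over exactly the point you identified.
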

\begin{proof}
To see the validity of the first claim,  let $\eta:=\int^b_a (\slof \circ \gamma)(t) \;dt$ and define the function $s\colon [a,b]\to [0,\eta]$ by setting
$$s(t):=\int^t_a \frac{1}{\slof(\gamma(r))} \;dr.$$ Then $s$ is a strictly increasing, absolutely continuous function.
Moreover, by Theorem~\ref{thm:inv_abs}, the inverse $s^{-1}:[0,\eta]\to [a,b]$ is absolutely continuous as well.
Define now the function $\omega\colon [0,\eta]\to [a,b]$ by setting $\omega(\tau):=\gamma(s^{-1}(\tau)).$ Clearly $\omega$ is absolutely continuous, and using Propositions~\ref{prop:chain} and \ref{prop:slope_param} we deduce
$$\|\dot{\omega}(\tau)\|=\slof(\omega(\tau)) \quad \textrm{and}\quad  (f\circ \omega)'(t)\leq - \big(\slof(\omega(t))\big)^2 \quad   {\rm a.e. \  on} \;  [a,b].$$
This shows that $\omega$ is a curve of near-maximal slope. The converse claim follows by similar means.
\end{proof}

\section{Existence of descent curves}\label{sec:exist}
In this section, we provide a natural and transparent existence proof for near-steepest descent curves in complete locally convex metric spaces. We begin with a few relevant definitions, adhering closely to the notation of \cite{fix_met}.
\smallskip
\begin{definition}[Metric segments]
{\em  \\
A subset $S$ of a  metric space $\mathcal{X}$ is a {\em metric segment} between two points $x$ and $y$ in $\mathcal{X}$ if there exists a closed interval $[a,b]$ and an isometry $\omega\colon [a,b]\to\mathcal{X}$ satisfying $\omega([a,b])=S$, $\omega(a)=x$, and $\omega(b)=y$. 
}
\end{definition}
\smallskip

\begin{definition}[Convex metric spaces]
{\rm
We will say that $\mathcal{X}$ is a {\em convex metric space} if for any distinct points $x,y\in \mathcal{X}$ there exists a metric segment between them. We will call $\mathcal{X}$ a {\em locally convex metric space} if each point in $\X$ admits a neighborhood that is a convex metric space in the induced metric.}
\end{definition}
\smallskip

Some notable examples of locally convex metric spaces are complete Riemannian manifolds and, more generally, length spaces that are complete and locally compact (see Hopf-Rinow theorem). For more examples, we refer the reader to \cite{met_grom}. 

We now introduce the following very weak continuity condition, which has been essential in the study of descent curves in metric spaces. See for example \cite[Theorem 2.3.1]{grad_flows}.
\begin{definition}[Continuity on slope-bounded sets]
{\rm
Consider a function $f\colon\X\to\overline{\R}$. We will say that $f$ is {\em continuous on slope-bounded sets} provided that for any point $\bar{x}\in\dom f$ the implication 
$$x_i\to\bar{x} \quad\textrm{with}\quad \sup_{i\in\mathbb{N}}\{|\nabla f|(x_i),d(x_i,\bar{x}),f(x_i)\}<\infty\quad \quad \Longrightarrow \quad\quad f(x_i)\to f(\bar{x}),$$
holds.}
\end{definition}



\smallskip


We now arrive at the main result of this section. We should note that in the following theorem we will suppose tough compactness assumptions relative to the metric topology. As is now standard, such compactness assumptions can be sidestepped by instead introducing weaker topologies \cite[Section 2.1]{grad_flows}. On the other hand, following this route would take us far off field and would lead to technical details that may obscure the main proof ideas for the reader. Hence we do not dwell on this issue further. We have however designed our proof so as to make such an extension as easy as possible for interested readers.
\smallskip
\begin{theorem}[Existence of near-steepest descent curves]\label{thm:near_steep}
Consider a lsc function $f\colon\X\to\overline{\R}$ on a complete locally convex metric space $\mathcal{X}$, along with a point $\bar{x}$ in the domain of $f$. Suppose that $f$ is continuous on slope-bounded sets and that bounded closed subsets of sublevel sets of $f$ are compact. Then there exists a curve $\gamma\colon [0,L]\to\X$ emanating from $\bar{x}$ and satisfying the following properties.
\begin{description}
\item[Decrease in value:] The function $f\circ\gamma$ is nonincreasing.
\item[Near-steepest descent:] $\gamma$ is 1-Lipschitz continuous and satisfies $$|(f\circ\gamma)'(t)|\geq \slof(\gamma(t)),\quad \textrm{ for a.e. }t\in[0,L].$$
\item[Regularity:] The function $\slof(\gamma(\cdot))$ is integrable on $[a,b]$ and we have $\|\dot{\gamma}(t)\|=1$ for a.e. $t\in [0,L]$.
\end{description}
\end{theorem}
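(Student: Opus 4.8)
The plan is to realize the informal recipe from the introduction: discretize the \emph{range} of $f$, project successive iterates onto the resulting sublevel sets, interpolate by metric segments, and let the mesh tend to zero. Fix a mesh $\delta>0$ and set $c_k:=f(\bar x)-k\delta$. Starting from $x_0:=\bar x$, I would define $x_{k+1}$ to be a metric projection of $x_k$ onto the sublevel set $[f\le c_{k+1}]$; such a nearest point exists because $[f\le c_{k+1}]$ is closed (by lower semicontinuity) and its intersection with a ball around $x_k$ is compact (by the sublevel compactness hypothesis). Using local convexity of $\X$, I would join $x_k$ to $x_{k+1}$ by a metric segment, subdividing a step with extra intermediate levels whenever $d(x_k,x_{k+1})$ exceeds the local convexity radius; this is harmless since, as the next step shows, the individual steps shrink with $\delta$ wherever the slope stays positive, and the construction simply terminates as one approaches a lower-critical point. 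Concatenating these segments and parametrizing by arc length produces a $1$-Lipschitz curve $\gamma^\delta\colon[0,L_\delta]\to\X$ emanating from $\bar x$.

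The quantitative heart of the construction is the error bound of Lemma~\ref{blemma}. Applied with $x=x_k$ and $\alpha=c_{k+1}$, it shows that if $r_k$ is a lower bound for $|\nabla f|$ on the relevant $f$-neighborhood of $x_k$, then the step length obeys $d(x_k,x_{k+1})\le r_k^{-1}(f(x_k)-c_{k+1})$; equivalently, along the $k$-th segment the value of $f$ drops by at least $r_k$ units per unit of arc length. Writing $r^\delta$ for the piecewise-constant function equal to $r_k$ on the $k$-th segment, summation telescopes the node values (using $f(x_{k+1})\le c_{k+1}$) and yields, for all $0\le s\le t\le L_\delta$, the discrete estimate
\[
\int_s^t r^\delta(\tau)\,d\tau\ \le\ f\big(\gamma^\delta(s)\big)-f\big(\gamma^\delta(t)\big)+O(\delta).
\]
Taking $s=0$ and $t=L_\delta$ bounds $L_\delta$ and so confines each $\gamma^\delta$ to a bounded subset of the sublevel set $[f\le f(\bar x)]$.

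With these uniform bounds in hand I would pass to the limit. The curves $\gamma^\delta$ are uniformly $1$-Lipschitz and take values in a fixed compact set, so a metric-space Arzel\`a--Ascoli argument extracts a subsequence converging uniformly on compact subintervals to a $1$-Lipschitz limit $\gamma$; here I invoke that $f$ is continuous on slope-bounded sets, together with lower semicontinuity, to identify the limit of the node values with $f\circ\gamma$. The limiting slope $\slof$ is, by its very definition as a $\liminf$ along $f$-convergence, the correct lower-semicontinuous object: as $\delta\to 0$ the discrete bounds $r^\delta(\tau)$ can be taken to approach $\slof(\gamma(\tau))$ from below, and passing to the limit in the displayed inequality (using lower semicontinuity of the integral functional $\gamma\mapsto\int_s^t\slof(\gamma)$) gives
\[
\int_s^t \slof\big(\gamma(\tau)\big)\,d\tau\ \le\ f\big(\gamma(s)\big)-f\big(\gamma(t)\big)\qquad\text{for all } 0\le s\le t\le L.
\]
Nonnegativity of the left-hand side already forces $f\circ\gamma$ to be nonincreasing, and differentiating the inequality at a Lebesgue point gives the near-steepest descent bound $|(f\circ\gamma)'(t)|\ge\slof(\gamma(t))$ for a.e.\ $t$.

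Finally, reparametrizing $\gamma$ by its own arc length via Theorem~\ref{thm:arc_param} yields a unit-speed curve with $\|\dot\gamma(t)\|=1$ a.e.; the descent inequality survives this reparametrization because it is geometric, as quantified by Propositions~\ref{prop:slope_param} and~\ref{prop:chain} (the exceptional set where the speed vanishes being null by Theorem~\ref{thm_sard}). Integrability of $\slof\circ\gamma$ is then immediate: combining the near-steepest descent bound $\slof(\gamma)\le|(f\circ\gamma)'|$ with the bounded variation of the monotone function $f\circ\gamma$ gives $\int_0^L\slof(\gamma(\tau))\,d\tau\le\int_0^L|(f\circ\gamma)'(\tau)|\,d\tau\le f(\bar x)-f(\gamma(L))<\infty$. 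I expect the main obstacle to be precisely the limiting step in the third paragraph: matching the discrete per-segment lower bounds $r^\delta$ with the limiting slope $\slof(\gamma(\tau))$ at almost every $\tau$ simultaneously, and upgrading the integrated inequality to the pointwise a.e.\ differential inequality, all while the slope itself---as opposed to its limiting envelope $\slof$---is badly discontinuous.
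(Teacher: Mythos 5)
Your outline reproduces the paper's construction: discretize the range of $f$, project successive iterates onto sublevel sets using the Ekeland-based error bound (Lemma~\ref{blemma}), join them by metric segments, extract a uniform limit by Arzel\`a--Ascoli, and finish with an arc-length reparametrization (Theorem~\ref{thm:arc_param}, together with Propositions~\ref{prop:slope_param}, \ref{prop:chain} and Theorem~\ref{thm_sard}). The only structural difference is that you parametrize the approximants by arc length and aim for an integrated inequality, whereas the paper parametrizes them by function value and proves a pointwise a.e.\ inequality. The problem is that the step you yourself flag as the ``main obstacle'' is a genuine gap, and it is precisely the technical heart of the paper's proof (Claim~\ref{cl:prop}). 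Your proposed tool for it fails on two counts. First, lower semicontinuity of the functional $\gamma\mapsto\int\slof(\gamma)$ under uniform convergence is not available: $\slof$ is a liminf along $f$-attentive convergence and need not be spatially lower semicontinuous (take $f(x)=x$ for $x\le 0$ and $f\equiv 1$ on $(0,1]$: then $\slof(0)=1$ while $\slof\equiv 0$ on $(0,1)$). Second, and more decisively, your discrete integrand is not $\slof\circ\gamma^\delta$ at all: it is $r^\delta$, an infimum of the slope $|\nabla f|$ over regions anchored at the \emph{nodes}, while $f$ is completely uncontrolled (possibly $+\infty$) at interior points of the metric segments, so no semicontinuity property of $\slof$ connects the two sides of your limit. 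What must actually be proved is that for a.e.\ $\tau$ one has $\liminf_\delta r^\delta(\tau)\ge\slof(\gamma(\tau))$ \emph{and} that the node values converge to $f(\gamma(\tau))$. The paper obtains both simultaneously with one argument: choose near-minimizers $y_k$ of the infimum defining $r^{(k)}_{i_k}$; the bound $\|\dot u_k(\tau)\|\le 1/r^{(k)}_{i_k}$ together with weak $L^2$ convergence of the metric derivatives keeps $|\nabla f|(y_k)$ bounded wherever the limit curve has nonzero speed; the $y_k$ converge spatially to $x(\tau)$ and their values are sandwiched between consecutive levels; continuity on slope-bounded sets then yields $(y_k,f(y_k))\to(x(\tau),f(x(\tau)))$, which gives at once $f(x(\tau))=f(\bar x)-\tau$ (hence monotonicity of $f\circ\gamma$) and $\liminf_k r^{(k)}_{i_k}\ge\slof(x(\tau))$. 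Until you supply this argument (or an equivalent), your third paragraph is an assertion rather than a proof.

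There is a second, smaller gap at the start. Your step presupposes that each sublevel set $[f\le c_{k+1}]$ is nonempty and that Lemma~\ref{blemma} applies at $x_k$ with a slope lower bound $r_k$ that stays bounded away from zero; none of this is automatic, and your remark that the construction ``simply terminates as one approaches a lower-critical point'' does not substitute for it. The paper secures it with the induction in its well-definedness claim: fix $\eta,r,C$ with $\eta<rC$ at the outset, and prove inductively that $r_j\ge r$ and $d(x_{j+1},\bar x)\le r^{-1}\tau_{j+1}<C$, so every iterate remains inside the ball on which the uniform slope bound holds. This bookkeeping is also what yields the uniform Lipschitz modulus $r^{-1}$ (equivalently, your uniform bound on $L_\delta$) and what justifies working inside a single convex neighborhood of $\bar x$, in place of your ad hoc subdivision of long steps.
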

\begin{proof}
First, by restricting attention to a sufficiently small neighborhood of $\bar{x}$ we can clearly assume that $\X$ is a convex metric space.
If the equality $\slof (\bar{x})=0$ were to hold, then the constant curve $\gamma\equiv \bar{x}$ would satisfy all the required properties. Hence we may suppose that $\bar{x}$ is not a lower-critical point of $f$. We can then find constants $\eta>0$, $r> 0$ and $C>0$ so that all conditions of Lemma \ref{blemma}
are satisfied for $\xb$,  $\al=f(\xb)-\eta$ and $K=C$. In particular, the level set $[f\leq f(\xb)-\eta]$ is nonempty. Shrinking $\eta$ we may enforce the inequality $\eta<rC$. Let $0=\tau_0<\tau_1<\ldots<\tau_k=\eta$ be a partition of $[0,\eta]$ 
into $k$ equal parts. We will adopt the notation
$$\lambda:=\frac{\tau_{i+1}-\tau_i}{\eta}, \quad\quad \alpha_i=f(\bar{x})-\tau_i, \quad\quad L_i:=[f\leq \alpha_i].$$ 
With this partition, we will associate a certain curve $u_k(\tau)$ for $\tau\in[0,\eta]$, naturally obtained by concatenating metric segments between points $x_i$, $x_{i+1}$ lying on consecutive sublevel sets. See Figure 3.1 for an illustration. For notational convenience, we will often suppress the index $k$ in $u_k(\tau)$. The construction is as follows. Set $u(0)=\xb$, and suppose that we have defined points $x_i$ for $i=0,\ldots,j$. We will now inductively define $x_{j+1}$. To this end, consider the quantity $$r_j:=\inf\,\{|\nabla f|(y):\; 
\alpha_{j+1}<f(y)\le f(x_j), ~d(y,x_j) <\la C\}.$$ 
and let $x_{j+1}$ be any point satisfying
$$x_{j+1}\in L_{j+1}\quad\quad \textrm{and} \quad\quad d(x_{j+1}, x_{j})\leq r_{j}^{-1}(f(x_j)-\alpha_{j+1})^{+}.$$
(In our setting, due to the compactness of bounded closed subsets of sublevel sets of $f$, we may simply define $x_{j+1}$ to be any closest point of $L_{j+1}$ to $x_j$.) 
\smallskip
\begin{claim}[Well-definedness]
For all indices $i=0,\ldots,k$, the points $x_i$ are well defined and satisfy
\begin{equation}\label{eq:lip_met}
d(x_{i+1},x_i)\le r_i^{-1}(\tau_{i+1}-\tau_i),
\end{equation}
and
\begin{equation}\label{step_met}
r_{i+1}\geq r,\quad d(x_{i+1},\xb)\le r^{-1}\tau_{i+1}.
\end{equation}
\end{claim}

\begin{proof}
The proof proceeds by induction. To this end, suppose that the points $x_i$ are well defined for indices $i=0,\ldots,j$ and the equations (\ref{eq:lip_met}) and (\ref{step_met}) are valid for indices $i=0,\ldots,j-1$. 
Observe if the inequality $f(x_j)\leq \alpha_{j+1}$ were true, then we may set $x_{j+1}:=x_j$ and the inductive step would be true trivially. Hence suppose otherwise. We claim that the conditions of  Lemma~\ref{blemma} are satisfied with 
$x=x_j$, $\al = \alpha_{j+1}$, $K=\la C$, and with $r_j$ in place of $r$.

To this end, we show the following


$\bullet$ \  $f(x_j)-\alpha_{j+1}\le\la r_jC$;

$\bullet$  \  $\alpha_{j+1}<f(y)\le f(x_j) \textrm{ and } d(y,x_j)\le \la C\quad\Longrightarrow\quad |\nabla f|(y)\ge r_j$. 

\noindent 
Observe
$$
f(x_j)-(f(\xb)-\tau_{j+1})\leq\tau_{j+1}-\tau_j\le (\tau_{j+1}-\tau_j)\frac{rC}{\eta}=\la rC\le \la r_j C,
$$
which is the first of the desired relations.  The second relation follows immediately from the definition of $r_j$.

Applying Lemma~\ref{blemma}, we conclude that the point $x_{j+1}$ is well-defined and the inequality 
$d(x_{j+1},x_j)\le r_j^{-1}(f(x_{j})-\alpha_{j+1})^{+}\le r_j^{-1}(\tau_{j+1}-\tau_j)$ holds. Consequently, we obtain
$$d(x_{j+1},\xb)\leq d(x_{j+1},x_j)+d(x_j,\xb)\leq r_j^{-1}(\tau_{j+1}-\tau_j) +r^{-1}\tau_{j}\leq r^{-1}\tau_{j+1}.$$
Finally we claim that the inequality $r_{j+1}\geq r$ holds. To see this, consider a point $y$ satisfying 
$f(\xb)-\tau_{j+2}<f(y)\le  f(x_{j+1})$ and $d(y,x_{j+1})<\la C$. 
Taking (\ref{step_met}) into account, along with the inequality
$r^{-1}\le C/\eta$, we obtain
$$
d(y,\xb)\le d(y,x_{j+1})+ d(x_{j+1},\xb)\le \frac{\tau_{j+2}-\tau_{j+1}}{\eta}C+\frac{\tau_{j+1}}{r}
=\frac{\tau_{j+2}}{\eta}C<C.
$$
Combining this with the obvious inequality
$f(\xb)>f(y)>f(\xb)-\eta$, we deduce $|\nabla f|(y)\ge r$ and consequently $r_{j+1} \geq r$. This completes the induction.
\end{proof}
\smallskip


For each index $i=0,\ldots,k-1$, let $\omega_i\colon [0,d(x_i,x_{i+1})]\to\mathcal{X}$ be the isometry parametrizing the metric segment between $x_i$ and $x_{i+1}$. For reasons which will become apparent momentarily, we now rescale the domain of $\omega_i$ by instead declaring
$$\omega_i\colon [\tau_{i},\tau_{i+1}]\to\mathcal{X}\quad \textrm{ to be }\quad \omega_i(t)=\omega_i\Big(\frac{d(x_{i+1},x_{i})}{\tau_{i+1}-\tau_i}(t-\tau_i)\Big).$$
Observe now that for any $s,t\in [\tau_{i},\tau_{i+1}]$ with $s <t$, we have 
\begin{equation}\label{eqn:lip_new}
d(\omega_i(t),\omega_i(s))=\frac{d(x_{i+1},x_{i})}{\tau_{i+1}-\tau_i}(t-s)\leq r_i^{-1}(t-s).
\end{equation}
It follows that all the curves $\omega_i$ are Lipschitz continuous with a uniform modulus $r^{-1}$. We may now define a curve $u_k\colon [0,\eta]\to\mathcal{X}$ by simply concatenating the domains of $\omega_i$ for each index $i=0,\ldots,k-1$. Clearly the mappings $u_k(\tau)$ are Lipschitz continuous with a uniform modulus $r^{-1}$ (independent of the index $k$). As all of these mappings coincide at $\tau=0$, and bounded subsets of sublevel sets of $f$ are compact, the well-known theorem of Arzel\`{a} and Ascoli (\cite[Section 7]{gen_top}) guarantees that  
a certain subsequence of $u_k(\tau)$ converges uniformly on $[0,\eta]$ to some mapping $x(\tau)$. Furthermore any uniform limit of $u_k(\tau)$ is clearly also Lipschitz continuous with the same constant $r^{-1}$. Observe that the metric derivative functions $\|\dot{u}_k(\cdot)\|$ lie in a bounded subset of $L^{2}(a,b)$. Hence up to a subsequence $\|\dot{u}_k(\cdot)\|$ converge weakly to some integrable mapping $m\colon [0,\eta]\to\R$ satisfying 
\begin{equation}
d(x(s),x(t))\leq \int^t_s m(\tau)\; d\tau, \quad \textrm{ whenever } 0<s\leq t<\eta.
\end{equation}
For what follows now, define the set of breakpoints  $$E:=\bigcup_{k\in\mathbb{N}}\bigcup_{i\in\mathbb{N}\cap[0,k]} \Big\{\frac{i\lambda_k}{\eta}\Big\}$$ and observe that is has zero measure in $[0,\eta]$. In addition, let $D$ be the full-measure subset of $[0,\eta]$ on which all the curves $u_k$ and $x$ admit a metric derivative. 

\smallskip
\begin{claim}\label{cl:prop}
For almost every $\tau\in [0,\eta]$ with $\|\dot{x}(\tau)\|\neq 0$, the following are true: 
\begin{itemize}
\item $f(x(\tau))=f(\bar{x})-\tau$,
\item $\|\dot{x}(\tau)\|\le\frac{1}{\slof(x(\tau))}$,
\item $|\nabla (f\circ x)|(\tau)\geq 1$.
\end{itemize}

\end{claim}
\begin{proof}
Fix a real $\tau\in D\setminus E$ with $\|\dot{x}(\tau)\|\neq 0$. Then using equation (\ref{eqn:lip_new}) we 
deduce 
\begin{equation}\label{eqn:2_met}
\|\dot{u}_k(\tau)\|\leq \frac{1}{r^{(k)}_{i_k}},
\end{equation}
for some $i_k\in \{0,\ldots,k\}$, where the superscript $(k)$ refers to partition of the interval $[0,\eta]$ into $k$ equal pieces. Noting that weak convergence does not increase the norm and using minimality of the metric derivative, we deduce
\begin{equation}\label{eqn:min_met}
\lf_{k\to\infty} \|\dot{u}_k(\tau)\|\geq m(\tau)\geq \|\dot{x}(\tau)\|, \quad \textrm{ for a.e. } \tau\in [0,\eta].
\end{equation}
Consequently there exists a subsequence of $\|\dot{u}_k(\tau)\|$, which we continue to denote by $\|\dot{u}_k(\tau)\|$, satisfying 
$\lim_{k\to\infty} \|\dot{u}_k(\tau)\|\neq 0$. Taking into account (\ref{eqn:2_met}), we deduce that $r^{(k)}_{i_k}$ remain bounded. We may then choose points $x^{(k)}_{i_k}$, $y_k$, and reals $\lambda_k$, $\tau^{(k)}_{i_k}$ with $\tau\in (\tau^{(k)}_{i_k},\tau^{(k)}_{i_k+1})$  satisfying
$$d(y_k,x^{(k)}_{i_k})< \lambda_k C,\quad\quad f(\bar{x})-\tau^{(k)}_{i_k+1}< f(y_k)\leq f(x^{(k)}_{i_k}), \quad x^{(k)}_{i_k}\to x(\tau),\quad \tau^{(k)}_{i_k}\to\tau$$ and $|\nabla f| (y_k)\leq r^{(k)}_{i_k}+\frac{1}{k}$. Then since $f$ is continuous on slope-bounded sets and the quantity $f(x^{(k)}_{i_k})-(f(\bar{x})-\tau^{(k)}_{i_k+1})$ tends to zero, we deduce 
$$f(x(\tau))=\lim_{k\to\infty} f(y_k)=\lim_{k\to\infty} f(\bar{x})-\tau^{(k)}_{i_k+1}=f(\bar{x})-\tau,$$
as claimed. Moreover $$\lf_{k\to\infty}\, r^{(k)}_{i_k} \geq \lf_{k\to\infty}\, \{|\nabla f| (y_k)-\frac{1}{k}\}\geq \slof (x(\tau)).$$
Combining this with (\ref{eqn:min_met}) and taking the limit in (\ref{eqn:2_met}), we obtain
$$\|\dot{x}(\tau)\|\le\frac{1}{\slof(x(\tau))},$$
as claimed.

Consider now a real $\tau\in [0,\eta]$ satisfying $\|\dot{x}(\tau)\|\neq 0$ and $|\nabla (f\circ x)|(\tau)< 1$. Then there exists a neighborhood of $\tau$ in $[0,\eta]$ along with a full-measure subneighborhood on which the metric derivative $\|\dot{x}(\cdot)\|$ is identically zero. It easily follows that the set of such points $\tau$ has zero measure in $[0,\eta]$.
\end{proof}
\smallskip

In particular, it follows from Claim~\ref{cl:prop} that for almost every $\tau\in [0,\eta]$ the implication
$$\|\dot{x}(\tau)\|\neq 0 \quad \Longrightarrow \quad  \slof (x(\tau))<\infty\quad \textrm{ and }\quad |\nabla (f\circ x)|(\tau) \geq \slof (x(\tau))\cdot \|\dot{x}(\tau)\|,$$
holds.

Now in light of Theorem~\ref{thm:arc_param}, there exists a nondecreasing absolutely continuous map $s\colon [a,b]\to [0,L]$ with $s(a)=0$ and $s(b)=L$, and a 1-Lipschitz curve $\gamma\colon [0,L]\to\X$ satisfying
$$x(\tau)=(\gamma\circ s)(\tau) \quad \textrm{ and }\quad  \|\dot{\gamma}(t)\|=1 \textrm{  for a.e. } t\in[0,L].$$
Then by Propositions~\ref{prop:slope_param} and~\ref{prop:chain}, whenever $s$ is differentiable at $\tau$ with $s'(\tau)\neq 0$ and $\gamma$ is metrically differentiable at $s(\tau)$ with $\|\dot{\gamma}(s(\tau))\|=1$, we have
$$\|\dot{x}(\tau)\|=s'(\tau),$$
and 
$$\slof (x(\tau))\cdot \|\dot{x}(\tau)\|\leq|\nabla (f\circ x)|(\tau)=|\nabla( f\circ\gamma)|(s(\tau))\cdot \|\dot{x}(\tau)\|.$$
Moreover it easily follows (in part using Theorem~\ref{thm_sard}) that for a.e. $t\in [0,L]$ and any $\tau\in s^{-1}(t)$ the function $s$ is differentiable at $\tau$ with $s'(\tau)\neq 0$ and $\gamma$ is metrically differentiable at $s(\tau)$ with $\|\dot{\gamma}(s(\tau))\|=1$.
Thus for a.e. $t\in [0,L]$, we have
$$\slof(\gamma(t))<\infty \quad \textrm{and} \quad |\nabla (f\circ\gamma)|(t)\geq \slof(\gamma(t)),$$
as claimed. Finally Claim~\ref{cl:prop}, along with lower-semicontinuity of $f$, easily implies that $f\circ\gamma$ is nonincreasing.

Observe by the same claim that for almost every $t\in [0,L]$ we have 
$$\slof(\gamma(t))\leq \frac{1}{s'(s^{-1}(t)) },$$
where $s^{-1}(t)$ is a singleton and $s'(\tau)\neq 0$.
Define $$E=\{t\in [0,L]: \exists \tau\in s^{-1}(t)\textrm{ with } s'(\tau)=0\}.$$
We obtain
$$\int_{[0,L]}\slof(\gamma(t))\; dt\leq \int_{E^c} \frac{1}{s'(s^{-1}(t)) }dt \leq \eta,$$
thereby completing the proof.
\end{proof}

\begin{figure}[htbp]\label{fig:ill}
   	\centering
   	\includegraphics[scale=0.4]{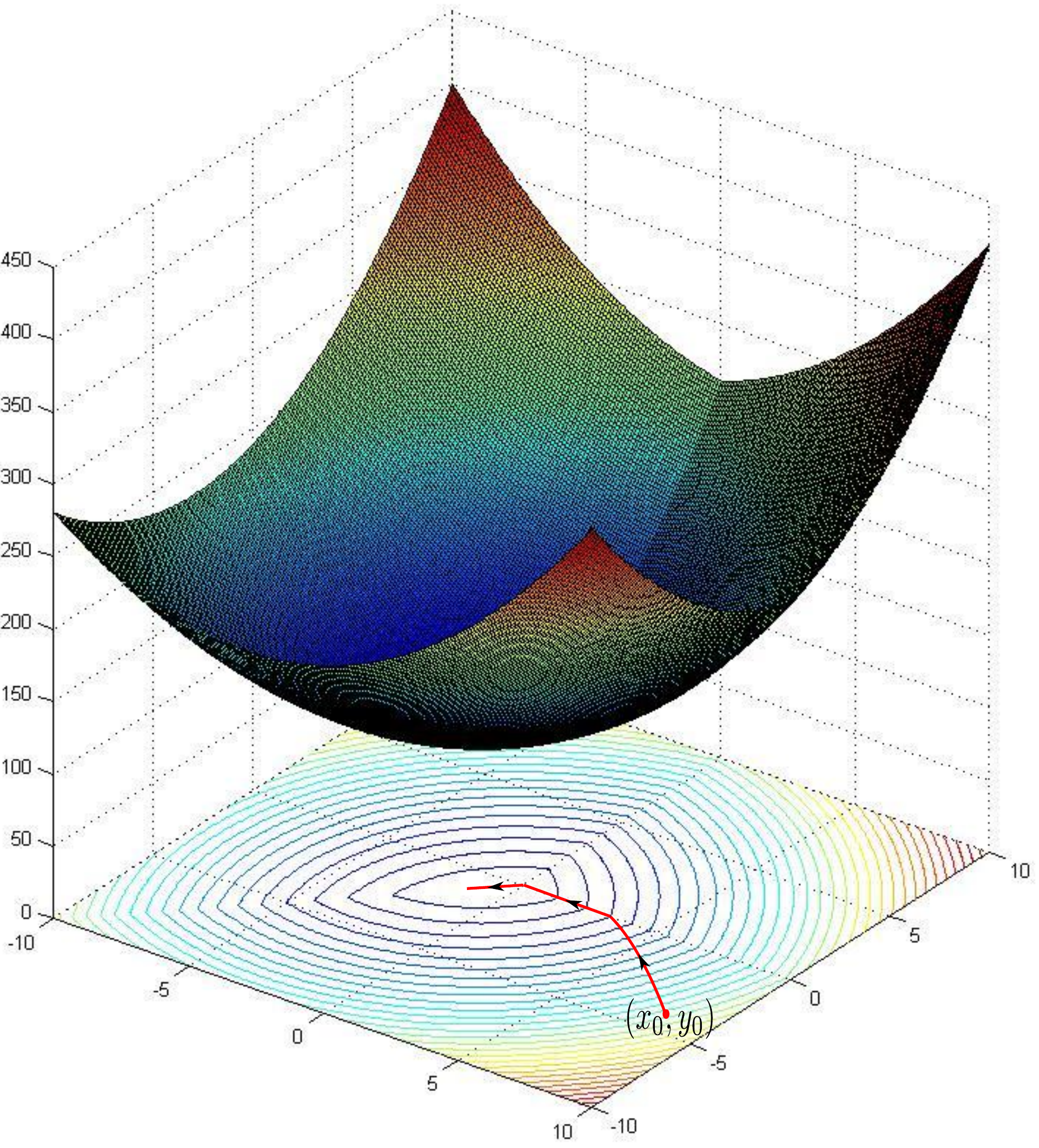}%
	\caption{$f(x,y)=\max\{x+y,\,|x-y|\}+x(x+1)+y(y+1)+100$}	
	\end{figure}

\bigskip

The following is now an easy consequence.
\smallskip
\begin{corollary}[Existence of curves of near-maximal slope]
Consider a lsc function $f\colon\X\to\overline{\R}$ on a complete locally convex metric space $\X$ and a point $\bar{x}\in\X$, with $f$ finite at $\bar{x}$. Suppose that $f$ is continuous on slope-bounded sets and that bounded closed subsets of sublevel sets of $f$ are compact. Then there exists a curve of near-maximal slope $\gamma\colon [0,T]\to\X$ starting at $\bar{x}$.
\end{corollary}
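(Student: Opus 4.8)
The plan is to deduce the result by feeding the output of Theorem~\ref{thm:near_steep} into the first claim of Proposition~\ref{prop:trans}. First I would dispose of the degenerate case: if $\slof(\bar{x})=0$, then $\bar{x}$ is lower-critical and the constant curve $\gamma\equiv\bar{x}$ is trivially a curve of near-maximal slope, since all three requirements of Definition~\ref{defn:cms} collapse to the identity $0=0$. So assume $\slof(\bar{x})>0$. Applying Theorem~\ref{thm:near_steep} produces a near-steepest descent curve $\gamma\colon[0,L]\to\X$ emanating from $\bar{x}$, with $\|\dot\gamma\|=1$ a.e.\ and with $\slof\circ\gamma$ integrable, hence finite a.e. To invoke Proposition~\ref{prop:trans} I must verify its two hypotheses: that $\slof\circ\gamma$ is finite a.e.\ (already in hand) and, crucially, that $(\slof\circ\gamma)^{-1}$ is integrable.

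The integrability of $(\slof\circ\gamma)^{-1}$ is the only real obstacle, and it amounts to bounding the limiting slope away from zero along $\gamma$. I would obtain such a bound near the starting point as follows. Since $f\circ\gamma$ is nonincreasing with $\gamma(0)=\bar{x}$, and $f$ is lsc, the composition $f\circ\gamma$ is right-continuous at $0$: indeed $f(\gamma(\sigma))\le f(\bar{x})$ for all $\sigma$, while $\lf_{\sigma\to 0^+}f(\gamma(\sigma))\ge f(\bar{x})$ by lower semicontinuity. Together with $\gamma(\sigma)\to\bar{x}$ this gives $\gamma(\sigma)\xrightarrow[f]{}\bar{x}$ as $\sigma\to 0^+$. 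Now the limiting slope $\slof$ is lower-semicontinuous with respect to this $f$-attentive convergence --- a routine diagonal argument straight from Definition~\ref{slo} --- so that $\lf_{\sigma\to 0^+}\slof(\gamma(\sigma))\ge\slof(\bar{x})>0$. Hence there exist $\delta>0$ and $c>0$ with $\slof(\gamma(\sigma))\ge c$ for all $\sigma\in(0,\delta)$.

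I would then restrict $\gamma$ to $[0,\delta]$; this restriction is again a near-steepest descent curve emanating from $\bar{x}$, and on it $(\slof\circ\gamma)^{-1}\le c^{-1}$ is bounded, hence integrable, while $\slof\circ\gamma$ remains finite a.e. Both hypotheses of the first claim of Proposition~\ref{prop:trans} now hold, so that proposition yields a reparametrization $\omega\colon[0,T]\to\X$ of the restricted curve that is a curve of near-maximal slope. Since reparametrizations preserve endpoints, $\omega(0)=\gamma(0)=\bar{x}$, which is exactly the required curve. The delicate point is the lower bound $\slof\circ\gamma\ge c$ near the start; everything else is bookkeeping. One could alternatively extract this bound directly from the constant $r>0$ built into the proof of Theorem~\ref{thm:near_steep}, where the curve is shown to lie in a region on which $|\nabla f|\ge r$, and hence on which $\slof\ge r$.
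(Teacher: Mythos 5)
Your proposal is correct and follows the same route as the paper, whose entire proof of this corollary is the single line ``This is immediate from Proposition~\ref{prop:trans} and Theorem~\ref{thm:near_steep}.'' In fact your write-up is more careful than the paper's: you rightly observe that the statement of Theorem~\ref{thm:near_steep} only provides integrability of $\slof\circ\gamma$, not of its reciprocal, and your argument --- right-continuity of $f\circ\gamma$ at $0$ plus the $f$-attentive lower semicontinuity of $\slof$, giving a bound $\slof\circ\gamma\geq c>0$ on an initial subinterval (or, alternatively, the constant $r>0$ implicit in the paper's construction) --- correctly supplies the missing hypothesis of Proposition~\ref{prop:trans}, with the constant curve handling the lower-critical case.
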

\begin{proof}
This is immediate from Proposition~\ref{prop:trans} and Theorem~\ref{thm:near_steep}.
\end{proof}

\smallskip
\begin{remark}[Steepest descent vs. near-steepest descent] {\\}
{\rm
In Example~\ref{exa:vs} we illustrated a near-steepest descent curve that fails to be a steepest descent curve. On the other hand, the construction used in Theorem~\ref{thm:near_steep} would not produce such a curve. This leads us to conjecture that our construction would always yield a true steepest descent curve at least when applied to semi-algebraic functions. This class of functions is the focal point of the following section.}
\end{remark}

\section{Descent curves in Euclidean spaces}\label{sec:comp}
In this section, we compare curves of near maximal slope to a more classical idea --- solutions of gradient-like dynamical systems. To do so we recall a notion of a generalized gradient, which in principle makes sense in Hilbert spaces. However, since the main results of this section concern semi-algebraic functions --- inherently finite dimensional objects  --- we stay within the setting of Euclidean spaces throughout the section.

\subsection{Some elements of Variational Analysis}\label{sec:prel}
In this section, we summarize some of the fundamental tools used in variational analysis and nonsmooth optimization.
We refer the reader to the monographs of Borwein-Zhu \cite{Borwein-Zhu}, Clarke-Ledyaev-Stern-Wolenski \cite{CLSW},
Mordukhovich \cite{Mord_1}, Penot \cite{penot_book}, and Rockafellar-Wets \cite{VA}, and to the survey of Ioffe \cite{ioffe_survey}, for more details. Unless otherwise stated, we follow the terminology and notation of \cite{ioffe_survey} and \cite{VA}.


Throughout this section, we will consider a real Euclidean space $\h$ with inner product $\langle \cdot,\cdot \rangle$. The symbol $\|\cdot\|$ will denote the corresponding norm on $\h$. 
Henceforth, the symbol $o(\|x-\bar{x}\|)$ will denote a term with the property 
$$\frac{o(\|x-\bar{x}\|)}{\|x-\bar{x}\|}\rightarrow 0,\quad \textrm{ when }x\to \bar{x} \textrm{ with }x\neq\bar{x}.$$
The symbols  $\mbox{\rm cl}\, Q$, $\mbox{\rm conv}\, Q$, $\cone Q$, and $\aff Q$ will denote the topological closure, the convex hull, the (non-convex) conical hull, and the affine span of $Q$ respectively. The symbol $\para Q$ will denote the parallel subspace of $Q$, namely the set $\para Q:=\aff Q - \aff Q$. 
An open ball of radius $\epsilon$ around a point $\bar{x}$ will be denoted by $B_{\epsilon}(\bar{x})$, while the open unit ball will be denoted by ${\bf B}$. A primary variational-analytic method for studying nonsmooth functions on $\h$ is by means of subdifferentials.
\smallskip
\begin{definition}[Subdifferentials]
{\rm Consider a function $f\colon\h\to\overline{\R}$ and a point $\bar{x}$ with $f(\bar{x})$ finite. 
\begin{enumerate}
\item The {\em Fr\'{e}chet subdifferential} of $f$ at $\bar{x}$, denoted 
$\hat{\partial}f(\bar{x})$, consists of all vectors $v \in \h$ satisfying $$f(x)\geq f(\bar{x})+\langle v,x-\bar{x} \rangle +o(\|x-\bar{x}\|).$$ 
\item The {\em limiting subdifferential} of $f$ at $\bar{x}$, denoted $\partial f(\bar{x})$, consists of all vectors $v\in\h$ for which there exist sequences $x_i\in\R^n$ and $v_i\in\hat{\partial} f(x_i)$ with $(x_i,f(x_i),v_i)$ converging to $(\bar{x},f(\bar{x}),v)$.
\item The {\em horizon subdifferential} of $f$ at $\bar{x}$, 
denoted $\partial^{\infty} f(\bar{x})$, consists of all vectors $v\in\h$ for which there exists a sequence of real numbers $\tau_i\downarrow 0$ and a sequence of points $x_i\in\h$, along with subgradients $v_i\in\hat{\partial} f(x_i)$, so that $(x_i,f(x_i),\tau_i v_i )$ converge to $(\bar{x},f(\bar{x}),v)$.
\item The {\em Clarke subdifferential} of $f$ at $\bar{x}$, denoted $\partial_c f(\bar{x})$, is obtained by the convexification 
$$\partial_c f(\bar{x}):=\cl\co [\partial f(\bar{x})+\partial^{\infty} f(\bar{x})].$$
\end{enumerate}
We say that $f$ is {\em subdifferentiable} at $\bar{x}$ whenever $\partial f(\bar{x})$ is nonempty (equivalently when $\partial_c f(\bar{x})$ is nonempty).
} 
\end{definition}
\smallskip

In particular, every locally Lipschitz continuous function is subdifferentiable. For $x$ such that $f(x)$ is not finite, we follow the convention that $\hat{\partial}f(x)=\partial f(x)=\partial^{\infty} f(x)=\partial_c f(\bar{x})=\emptyset$. 

The subdifferentials $\hat{\partial} f(\bar{x})$, $\partial f(\bar{x})$, and $\partial_c f(\bar{x})$ generalize the classical notion of gradient. In particular, for ${\bf C}^1$-smooth functions $f$ on $\h$, these three subdifferentials consist only of the gradient $\nabla f(x)$ for each $x\in\h$. For convex $f$, these subdifferentials coincide with the convex subdifferential. The horizon subdifferential $\partial^{\infty} f(\bar{x})$ plays an entirely different role; namely, it detects horizontal ``normals'' to the epigraph. In particular, a lsc function $f\colon\h\to\overline{\R}$ is locally Lipschitz continuous around $\bar{x}$ if and only if we have $\partial^{\infty} f(\bar{x})=\{0\}$.

For a set $Q\subset\h$, we define the {\em indicator function} of $Q$, denoted $\delta_Q$, to be zero on $Q$ and plus infinity elsewhere. The geometric counterparts of subdifferentials are normal cones.
\smallskip
\begin{definition}[Normal cones]
{\rm
Consider a set $Q\subset \h$. Then the {\em Fr\'{e}chet}, {\em limiting}, and {\em Clarke normal cones} to $Q$ at any point $\bar{x}\in\h$ are defined by $\hat{N}_Q(\bar{x}):=\hat{\partial} \delta(\bar{x})$, $N_Q(\bar{x}):=\partial \delta(\bar{x})$, and $N^c_Q(\bar{x}):=\partial_c \delta(\bar{x})$ respectively.}
\end{definition}
\smallskip

A particularly nice situation occurs when all the normal cones coincide.
\smallskip
\begin{definition}[Clarke regularity of sets]
{\rm A set $Q\subset\h$ is said to be {\em Clarke regular} at a point $\bar{x}\in Q$ if it is locally closed at $\bar{x}$ and every limiting normal vector to $Q$ at $\bar{x}$ is a Fr\'{e}chet normal vector, that is the equation $N_Q(\bar{x})=\hat{N}_Q(\bar{x})$ holds.}
\end{definition}
\smallskip

The functional version of Clarke regularity is as follows.
\smallskip
\begin{definition}[Subdifferential regularity]
{\rm A function $f\colon\h\to\overline{\R}$ is called {\em subdifferentially regular} at $\bar{x}$ if $f(\bar{x})$ is finite and $\mbox{\rm epi}\, f$ is Clarke regular at $(\bar{x},f(\bar{x}))$ as a subset of $\h\times\R$.}
\end{definition}
\smallskip

In particular, if $f\colon\h\to\overline{\R}$ is subdifferentially regular at a point $\bar{x}\in\dom f$, then equality 
$\hat{\partial} f(\bar{x})=\partial f(\bar{x})$ holds (\cite[Corollary 8.11]{VA}). Shortly, we will need the following result describing normals to sublevel sets \cite[Proposition 10.3]{VA}. We provide an independent proof for completeness and ease of reference in future work. The reader may safely skip it upon first reading. 
\smallskip
\begin{proposition}[Normals to sublevel sets]\label{prop:norm_level} Consider a lsc function $f\colon\h\to\overline{\R}$ and a point $\bar{x}\in\h$ with $0\notin\partial f(\bar{x})$.
Then the inclusion
$$
N_{[f\le f(\xb)]}(\xb)\subset (\cone \sd f(\xb))\cup\sd^{\infty}f(\xb)\quad \textrm{holds}.
$$
\end{proposition}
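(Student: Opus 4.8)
The plan is to prove the inclusion by taking an arbitrary limiting normal vector $v \in N_{[f \le f(\bar{x})]}(\bar{x})$ and showing it lies in the target cone. I would work directly from the definitions of the Fréchet and limiting subdifferentials and normal cones, exploiting the interplay between the sublevel set $S := [f \le f(\bar{x})]$ and the epigraph of $f$. The natural engine here is the assumption $0 \notin \partial f(\bar{x})$, which rules out degeneracy and forces any normal to the sublevel set to have a genuine ``descent'' component.

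\textbf{Reduction to Fréchet normals.}
First I would reduce to the Fréchet level. Since $N_S(\bar{x}) = \partial \delta_S(\bar{x})$ is the limiting object, any $v \in N_S(\bar{x})$ arises as a limit $v = \lim_i v_i$ with $v_i \in \hat{N}_S(x_i)$ and $x_i \to \bar{x}$, $x_i \in S$. If I can establish the corresponding \emph{Fréchet} inclusion $\hat{N}_S(x) \subset (\cone \hat{\partial} f(x)) \cup \hat{\partial}^\infty f(x)$ at nearby points (or a suitable scaled version of it), then passing to the limit and invoking the definitions of $\partial f$ and $\partial^\infty f$ should recover the limiting statement. The limiting passage is where the two branches of the union --- the cone of subgradients versus the horizon subdifferential --- get distinguished: whether the scaling coefficients $\lambda_i \ge 0$ in $v_i = \lambda_i w_i$ (with $w_i \in \hat{\partial} f(x_i)$) stay bounded or blow up determines which set $v$ lands in. Bounded coefficients give a limiting subgradient scaled by a nonnegative constant, hence an element of $\cone \partial f(\bar{x})$; unbounded coefficients, after renormalizing, produce a horizon subgradient in $\partial^\infty f(\bar{x})$.

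\textbf{The Fréchet step.}
For the core Fréchet computation, I would fix $x \in S$ near $\bar{x}$ and take $v \in \hat{N}_S(x)$, so that $\langle v, y - x \rangle \le o(\|y - x\|)$ for all $y \in S$. The geometric picture is that $v$ is an outer normal to a set defined by $f \le f(\bar{x})$. I would relate this to the epigraph: points just below the graph of $f$ near $x$ lie in $S$ when $f(x) = f(\bar{x})$, so the normal cone to $S$ is controlled by the normal cone to $\epi f$ at $(x, f(x))$ projected onto the $\h$-factor. Concretely, a Fréchet normal $(w, -\beta)$ to $\epi f$ with $\beta \ge 0$ yields, when $\beta > 0$, the Fréchet subgradient $\beta^{-1} w \in \hat{\partial} f(x)$ and a sublevel normal proportional to $w$; when $\beta = 0$, it yields a horizon direction. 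This is essentially the standard relationship between normals to sublevel sets and epigraphical normals, and the case $f(x) < f(\bar{x})$ (where $x$ is interior to $S$ and contributes nothing) is handled separately.

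\textbf{Main obstacle.}
\emph{The hard part will be} the limiting analysis of the scaling coefficients and ensuring that the nondegeneracy hypothesis $0 \notin \partial f(\bar{x})$ is used correctly to control the boundary behavior. One must verify that the active points $x_i$ genuinely satisfy $f(x_i) = f(\bar{x})$ in the relevant limit (points strictly inside the sublevel set give zero Fréchet normals and must be discarded), and that the sequence of epigraphical normals does not collapse in a way that loses the decomposition. The condition $0 \notin \partial f(\bar{x})$ is precisely what guarantees that the boundary of $S$ near $\bar{x}$ coincides with $[f = f(\bar{x})]$ in the appropriate sense and that the vertical component $\beta_i$ cannot be forced to zero simultaneously with the horizontal part vanishing; this prevents the spurious outcome $v = 0$ from the subgradient branch and cleanly separates the two cases of the union. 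I would therefore organize the limiting argument around a case split on whether $\liminf_i \|w_i\| / \beta_i$ is finite, taking care that all the convergences $(x_i, f(x_i)) \to (\bar{x}, f(\bar{x}))$ are in force so that the definitions of $\partial f(\bar{x})$ and $\partial^\infty f(\bar{x})$ apply verbatim.
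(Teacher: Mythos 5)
Your overall scaffolding (reduce limiting normals to Fréchet normals, then pass to the limit with a case split on scaling coefficients) matches the shape of the paper's argument, but the load-bearing step --- what you call ``the Fréchet step'' --- is false as stated, and the failure is not repairable within your framework. You claim that a Fréchet normal to the sublevel set $S=[f\le f(\bar{x})]$ at a point $x$ is controlled by the Fréchet normal cone to $\epi f$ \emph{at the same point} $(x,f(x))$, projected to the horizontal factor. Here is a counterexample satisfying all hypotheses of the proposition: take $f\colon\R\to\R$ with $f(y)=-\sqrt{-y}$ for $y\le 0$ and $f(y)=y$ for $y>0$, and $\bar{x}=0$. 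Then $\hat{\partial}f(y)=\{\tfrac{1}{2}|y|^{-1/2}\}$ for $y<0$, $\hat{\partial}f(y)=\{1\}$ for $y>0$, and $\hat{\partial}f(0)=\emptyset$; hence $\partial f(0)=\{1\}$ and the hypothesis $0\notin\partial f(0)$ holds. The sublevel set is $S=(-\infty,0]$, so $N_S(0)=\hat{N}_S(0)=[0,\infty)$, and the vector $v=1$ must be accounted for. But $\cone\hat{\partial}f(0)$ is trivial, and a direct computation gives $\hat{N}_{\epi f}(0,0)=\{(w,0): w\le 0\}$, so no Fréchet epigraphical normal at $(0,0)$ has positive horizontal part. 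Moreover every point of $S$ other than $0$ is interior to $S$ and carries only the zero Fréchet normal, so your outer limiting step (Fréchet normals to $S$ at points \emph{of} $S$, converted pointwise) can never produce $v=1$. The subgradient information that certifies $v=1$ lives at points $y>0$ \emph{outside} $S$ (where $\hat{\partial}f(y)=\{1\}$), or at points $y<0$ where subgradients blow up; no pointwise identity at points of $S$ can reach it.

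This is exactly why the paper does not argue pointwise: it embeds the sublevel normal as a horizontal normal $(u^*,0)$ to the intersection $Q:=\epi f\cap\{(x,\alpha):\alpha\le f(\bar{x})\}$ at the top point $(\bar{x},f(\bar{x}))$, and then invokes the \emph{fuzzy} intersection rule (Ekeland-based fuzzy calculus, \cite[Chapter 2.1]{ioffe_survey}), which produces Fréchet epigraphical normals at points $(x_{1k},\alpha_{1k})$ merely \emph{near} $(\bar{x},f(\bar{x}))$ --- points whose first coordinates need not lie in $S$ at all. That fuzzy reach-over is the actual engine of the proof, and it is a substantive variational statement, not ``essentially the standard relationship'' you can quote; as the example shows, the pointwise version of it is simply wrong. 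A secondary issue: your dichotomy on the coefficients $\lambda_i$ (with $v_i=\lambda_i w_i$, $w_i\in\hat{\partial}f(x_i)$) is inverted. If $\lambda_i\to\infty$ then $w_i\to 0$, which yields $0\in\partial f(\bar{x})$ and contradicts the hypothesis --- this branch is vacuous, which is precisely how $0\notin\partial f(\bar{x})$ enters the paper's proof (it bounds the vertical components $\beta_{1k}$). The horizon branch instead arises when $\lambda_i\to 0$ while $\|w_i\|\to\infty$, matching the definition of $\partial^{\infty}f(\bar{x})$ as limits of small multiples of blowing-up Fréchet subgradients.
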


\begin{proof}
Define the real number $\alb:=f(\xb)$ and the sets $\cll_{\alb}:=\{ (x,\al):\; \al\le\alb\}$ and
$$
Q_{\alb}:=(\epi f)\bigcap \cll_{\alb}=\{(x,\al):\; f(x)\le\al\le\alb\}.
$$
We first show the implication 
\begin{equation}\label{one}
(x^*,0)\in N_{Q_{\alb}}(\xb,\alb)\  \Longrightarrow\  x^*\in (\cone \sd f(\xb))\bigcup\sd^{\infty} f(\xb).
\end{equation}
Indeed, consider a vector $(x^*,0)\in N_{Q_{\alb}}(\xb,\alb)$. 
Then Fuzzy calculus \cite[Chapter 2.1]{ioffe_survey} implies that there are sequences 
$(x_{1k},\al_{1k})\in \epi f$,
$(x^{*}_{1k},\beta_{1k})\in \hat{N}_{\sepi f}(x_{1k},\al_{1k})$, 
$(x_{2k},\al_{2k})\in \cll_{\alb}$
and $(x_{2k}^*,\beta_{2k})\in \hat{N}_{\cll_{\alb}}(x_{2k},\al_{2k})$ satisfying $$(x_{1k},\al_{1k})\to (\bar{x},\alb),\quad (x_{2k},\al_{2k})\to(\bar{x},\alb),\quad
x_{1k}^*+x_{2k}^*\to x^*, \quad\beta_{1k}+\beta_{2k}\to 0.$$
Observe $x_{2k}^*=0$ and hence $x^{*}_{1k}\to x^*$. Furthermore, by nature of epigraphs we have $\beta_{1k}\le 0$. If up to a subsequence we had $\beta_{1k}= 0$, then (\ref{one}) would follow immediately. Consequently, we may suppose that the inequality  $\beta_{1k}<0$ is valid. Then we have
$|\beta_{1k}|^{-1}x^{*}_{1k}\in\hat{\partial} f(x_{1k})$. Since the norms of $x_{1k}^*$ are uniformly bounded and we have $0\notin\sd f(\xb)$, the sequence $\beta_{1k}$
must be bounded. Consequently we may assume that $\beta_{1k}$  converges to some $\beta$ and (\ref{one}) follows.

Now consider a vector $u^*\in \hat{N}_{[f\le\alb]}(u)$ for some $u\in [f\le\alb]$. Consequently the inequality $\langle u^*,h\rangle\le o(\| h\|)$ holds, whenever $h$ satisfies
$f(u+h)\le\alb$. The latter in turn implies $(u^*,0)\in \hat{N}_{Q_{\alb}}(u,\alb)$.
Together with (\ref{one}),
taking limits of Fr\'{e}chet subgradients and applying equation (\ref{one}) completes the proof.    
\end{proof}
\smallskip

The following result, which follows from the proofs of \cite[Propositions 1 and 2, Chapter 3]{ioffe_survey}, establishes an elegant relationship between the slope and subdifferentials. 
\begin{proposition}[Slope and subdifferentials]\label{ferslope} 
Consider a lsc function $f\colon\h\to\overline{\R}$, and a point $\bar{x}\in\h$ with $f(\bar{x})$ finite.  Then we have $|\nabla f|(\bar{x})\leq d(0,\hat{\partial} f(\bar{x}))$, and furthermore 
the equality
$$
\slof (\bar{x})=d(0,\sd f(\bar{x})),\quad \textrm{ holds}.
$$ 
In particular, the two conditions $\slof (\bar{x})=0$ and $0\in\partial f(\bar{x})$ are equivalent.
\end{proposition}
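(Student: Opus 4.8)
The plan is to write $\ell:=|\nabla f|(\bar x)$, to recall that $\slof(\bar x)=\liminf_{x\xrightarrow[f]{}\bar x}|\nabla f|(x)$, and to assemble the equality from three ingredients: an elementary Fr\'echet bound, its limiting upgrade, and one genuinely variational density statement. First I would prove the pointwise estimate $|\nabla f|(\bar x)\le d(0,\hat\partial f(\bar x))$ directly. If $\hat\partial f(\bar x)=\emptyset$ there is nothing to show, so fix $v\in\hat\partial f(\bar x)$; the defining subgradient inequality gives $f(\bar x)-f(x)\le -\langle v,x-\bar x\rangle+o(\|x-\bar x\|)\le \|v\|\,\|x-\bar x\|+o(\|x-\bar x\|)$, and dividing by $\|x-\bar x\|$ and taking the limsup as $x\to\bar x$ yields $|\nabla f|(\bar x)\le\|v\|$. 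Minimizing over $v$ gives the first asserted inequality.

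The upper estimate $\slof(\bar x)\le d(0,\partial f(\bar x))$ then falls out by passing to the limit. Given $v\in\partial f(\bar x)$, select $x_i\xrightarrow[f]{}\bar x$ and $v_i\in\hat\partial f(x_i)$ with $v_i\to v$; the Fr\'echet bound applied at each $x_i$ gives $|\nabla f|(x_i)\le\|v_i\|$, so $\slof(\bar x)\le\liminf_i|\nabla f|(x_i)\le\|v\|$, and infimizing over $v\in\partial f(\bar x)$ finishes this direction.

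The reverse inequality $\slof(\bar x)\ge d(0,\partial f(\bar x))$ is the crux, and it is where I expect the real difficulty to lie. The key lemma I would isolate is a density statement: for any point $y$ with $|\nabla f|(y)<\infty$ and any $\eta>0$ there exist $z$ with $(z,f(z))$ within $\eta$ of $(y,f(y))$ and a Fr\'echet subgradient $w\in\hat\partial f(z)$ satisfying $\|w\|\le|\nabla f|(y)+\eta$. To prove it, note that for any $\lambda>|\nabla f|(y)$ the very definition of slope forces the strict inequality $f(x)+\lambda\|x-y\|>f(y)$ for all $x$ near $y$ with $x\neq y$, so $y$ is a local minimizer of $x\mapsto f(x)+\lambda\|x-y\|$, whence $0\in\hat\partial\big(f+\lambda\|\cdot-y\|\big)(y)$. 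Applying the fuzzy sum rule (\cite[Chapter 2.1]{ioffe_survey}) at this local minimum splits $0$ into a Fr\'echet subgradient $w$ of $f$ at a nearby point $z$ and a Fr\'echet subgradient of $\lambda\|\cdot-y\|$; since the latter summand is $\lambda$-Lipschitz its subgradients have norm at most $\lambda$, giving $\|w\|\le\lambda+\rho$ for an arbitrarily small fuzz $\rho$, and letting $\lambda\downarrow|\nabla f|(y)$ proves the lemma. This appeal to the fuzzy sum rule --- equivalently to Ekeland's principle together with a smooth variational principle on the Asplund space $\h$ --- is the main obstacle; everything surrounding it is bookkeeping. Granting the lemma, I would take $y_i\xrightarrow[f]{}\bar x$ with $|\nabla f|(y_i)\to\slof(\bar x)$, apply it with $\eta=1/i$ to obtain $z_i\xrightarrow[f]{}\bar x$ and $w_i\in\hat\partial f(z_i)$ with $\limsup_i\|w_i\|\le\slof(\bar x)$, and (when $\slof(\bar x)<\infty$, the other case being trivial) extract a convergent subsequence $w_i\to v$. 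By definition $v\in\partial f(\bar x)$ and $\|v\|\le\slof(\bar x)$, so $d(0,\partial f(\bar x))\le\slof(\bar x)$, completing the equality.

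Finally, for the ``in particular'' clause, the limiting subdifferential $\partial f(\bar x)$ is closed, so $d(0,\partial f(\bar x))=0$ is equivalent to $0\in\partial f(\bar x)$; combined with the established equality $\slof(\bar x)=d(0,\partial f(\bar x))$ this yields the stated equivalence between $\slof(\bar x)=0$ and $0\in\partial f(\bar x)$.
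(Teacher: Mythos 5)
Your proof is correct, and its decomposition matches the paper's exactly: the Fr\'echet bound $|\nabla f|(\bar x)\le d(0,\hat{\partial} f(\bar x))$ from the definition, the inequality $\slof(\bar x)\le d(0,\partial f(\bar x))$ by passing that bound to the limit along $f$-attentive sequences, and the crux inequality $\slof(\bar x)\ge d(0,\partial f(\bar x))$ via the penalization observation that a nearby point $y$ with $|\nabla f|(y)<\lambda$ is a local minimizer of $u\mapsto f(u)+\lambda\|u-y\|$. The single point of divergence is which sum rule exploits that local minimum. The paper invokes the exact sum rule for limiting subdifferentials, $0\in\partial f(y)+\lambda{\bf B}$ (citing \cite[Exercise 10.10]{VA}), obtaining small limiting subgradients at nearby points, and then lets the base point tend to $\bar x$ --- a step that implicitly uses outer semicontinuity of $\partial f$ under $f$-attentive convergence. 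You instead use the fuzzy sum rule to remain at the Fr\'echet level: your key lemma produces $w_i\in\hat{\partial} f(z_i)$ at points $z_i\xrightarrow[f]{}\bar x$ with $\limsup_i\|w_i\|\le\slof(\bar x)$, so the concluding limit lies in $\partial f(\bar x)$ by the very definition of the limiting subdifferential, with no separate closedness argument needed. What each approach buys: the paper's route is shorter given the cited exact calculus rule; yours is more self-contained in its limiting step, at the cost of invoking fuzzy calculus explicitly (a tool the paper itself uses in the proof of Proposition~\ref{prop:norm_level}). Your handling of the degenerate cases --- empty $\hat{\partial} f(\bar x)$, infinite $\slof(\bar x)$ via the already-proved upper estimate, and closedness of $\partial f(\bar x)$ for the ``in particular'' clause --- is also sound.
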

\begin{proof}
The inequality $|\nabla f|(\bar{x})\leq d(0,\hat{\partial} f(\bar{x}))$ is immediate from the definition of the Fr\'{e}chet subdifferential. Now define $m= \slof (\bar{x})$.
One may easily check that if $m$ is infinite, then the subdifferential $\partial f(\bar{x})$ is empty, and therefore the result holds trivially. Consequently we may suppose that $m$ is finite. 

Fix an arbitrary $\epsilon >0$, and let $x$ be a point satisfying $$\|x-\bar{x}\|<\epsilon,\quad |f(x)-f(\bar{x})|<\epsilon,\quad \textrm{and }\quad|\nabla f|(x)< m+\epsilon.$$ 
Define the function $g(u):=f(u)+(m+\epsilon)\|u-x\|$. Observe that for all $u$ sufficiently close to $x$, we have $g(u)\geq f(x)$. We deduce (see e.g. \cite[Exercise 10.10]{VA}) $$0\in\partial g(x)\subset \partial f(x)+(m+\epsilon){\bf B}.$$ Hence we obtain the inequality $m+\epsilon\geq d(0,\partial f(x))$. Letting $\epsilon$ tend to zero, we deduce 
$m\geq d(0,\partial f(\bar{x}))$. 

To see the reverse inequality, consider a vector $\bar{v}\in\partial f(\bar{x})$ achieving $d(0,\partial f(\bar{x}))$. Then there exist sequences of points $x_i$ and vectors $v_i\in\hat{\partial} f(x_i)$ with $(x_i,f(x_i),v_i)\to(\bar{x},f(\bar{x}),\bar{v})$. Observe that for each index $i$, we have $\|v_i\|\geq |\nabla f|(x_i)$. Letting $i$ tend to infinity, the result follows.
\end{proof}
\smallskip


In particular, if $f$ is subdifferentially regular at $\bar{x}$, then the slope and the limiting slope are one and the same, that is the equation $\slof (\bar{x})=|\nabla f|(\bar{x})$ holds. We conclude this subsection with the following standard result of Linear Algebra.
\smallskip
\begin{lemma}[Result in Linear Algebra]\label{lem:lin}
Consider a subspace $V$ of $\h$. Then for any vector $b\in\h$, the equations 
$$P_V(b)=(b+V^{\perp})\cap V=\argmin_{z\in b+V^{\perp}}\|z\|, \quad \textrm{hold}.$$
\end{lemma}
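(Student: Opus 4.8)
The plan is to reduce all three expressions to one computation using the orthogonal decomposition of $\h$. Since $V$ is a subspace of the finite-dimensional space $\h$, we have the orthogonal direct sum $\h=V\oplus V^{\perp}$. Accordingly, I would write $b=b_V+b_{\perp}$, where $b_V\in V$ and $b_{\perp}\in V^{\perp}$ are the (unique) orthogonal components of $b$. The strategy is then to show that each of the three sets in the claimed equality equals the singleton $\{b_V\}$.

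First I would handle the metric projection. The standard characterization of the projection onto a subspace states that $P_V(b)$ is the unique point $y\in V$ satisfying $b-y\in V^{\perp}$; since $b-b_V=b_{\perp}\in V^{\perp}$ and $b_V\in V$, this gives $P_V(b)=\{b_V\}$. Next I would treat the intersection $(b+V^{\perp})\cap V$. A generic element has the form $b+w$ with $w\in V^{\perp}$, and decomposing it as $b+w=b_V+(b_{\perp}+w)$ shows that it lies in $V$ precisely when its $V^{\perp}$-component $b_{\perp}+w$ vanishes; this forces $w=-b_{\perp}$ and hence $b+w=b_V$, so $(b+V^{\perp})\cap V=\{b_V\}$ as well. (Uniqueness also follows abstractly: if $z_1,z_2$ both lie in the intersection, then $z_1-z_2\in V\cap V^{\perp}=\{0\}$.)

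For the final expression $\argmin_{z\in b+V^{\perp}}\|z\|$, I would again write $z=b+w=b_V+(b_{\perp}+w)$ with $b_V\in V$ and $b_{\perp}+w\in V^{\perp}$. Because these two summands are orthogonal, the Pythagorean identity yields
$$
\|z\|^2=\|b_V\|^2+\|b_{\perp}+w\|^2,
$$
which is minimized over $w\in V^{\perp}$ exactly when $b_{\perp}+w=0$, i.e. when $z=b_V$. This identifies the argmin as the singleton $\{b_V\}$, completing the chain of equalities.

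There is no serious obstacle here; the only points requiring care are the standing facts that $\h=V\oplus V^{\perp}$ orthogonally (so that the components $b_V,b_{\perp}$ exist and are unique) and that the Pythagorean identity applies precisely because $V\perp V^{\perp}$. Both are automatic in the finite-dimensional Euclidean setting, so the entire argument is a short orthogonality computation rather than anything delicate.
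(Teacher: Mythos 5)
Your proof is correct. It shares the paper's underlying idea --- the orthogonal decomposition $\h=V\oplus V^{\perp}$, which the paper writes as $b=P_V(b)+P_{V^{\perp}}(b)$ --- but the two arguments are organized differently, most visibly in the $\argmin$ step. You reduce all three sets directly to the singleton $\{b_V\}$, handling the minimization by the Pythagorean identity $\|z\|^2=\|b_V\|^2+\|b_{\perp}+w\|^2$, which makes the whole lemma a single elementary orthogonality computation. The paper instead first proves $P_V(b)=(b+V^{\perp})\cap V$ (with uniqueness from $V\cap V^{\perp}=\{0\}$, exactly as in your parenthetical remark) and then disposes of the $\argmin$ by invoking first-order optimality conditions for $\min_{z\in b+V^{\perp}}\|z\|^2$: the minimizer is characterized by membership in $(b+V^{\perp})\cap V$, so the third set collapses onto the second without ever computing it. Your route is more self-contained and avoids any appeal to optimality conditions; the paper's route is slightly slicker in that it links the variational characterization to the geometric one directly, which is the style natural to the variational-analysis context in which the lemma is used (Proposition~\ref{prop:evol_max}). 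Either argument is complete and rigorous.
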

\begin{proof}
Observe $b=P_V(b)+P_{V^{\perp}}(b)$, and consequently the inclusion $$P_V(b)\in(b+V^{\perp})\cap V\quad \textrm{holds}.$$ The reverse inclusion follows from the trivial computation 
$$z\in (b+V^{\perp})\cap V\Longrightarrow z-P_V(b)\in V \cap V^{\perp}\Longrightarrow z=P_V(b).$$

Now observe that first order optimality conditions imply that the unique minimizer $\bar{z}$ of the problem $$\min_{z\in b+V^{\perp}}\|z\|^2,$$ is characterized by the inclusion $\bar{z}\in (b+V^{\perp})\cap V$, and hence the result follows.
\end{proof}

\subsection{Main results}
In this section, we consider curves of near-maximal slope in Euclidean spaces. In this context, it is interesting to compare such curves to solutions $x\colon [0,\eta]\to\h$ of subgradient dynamical systems
$$\dot{x}(t)\in -\partial f(x(t)), \quad \textrm{ for a.e. } t\in [0,\eta].$$
It turns out that the same construction as in the proof of Theorem~\ref{thm:near_steep} shows that there exist near-steepest descent curves $x$ so that essentially, up to rescaling, the vector $\dot{x}(t)$ lies in $-\partial f(x(t))$ for a.e. $t\in [0,\eta]$. 

\smallskip
\begin{theorem}[Existence of near-steepest descent curves]
Consider a lsc function $f\colon\h\to\overline{\R}$, along with a point $\bar{x}$ in the domain of $f$. Suppose that $f$ is continuous on slope bounded sets.
Then there exists a curve of near-maximal slope $x\colon [0,L]\to\X$ emanating from $\bar{x}$ and satisfying 
$$\dot x(t)\in -\cl\cone\partial_c f(x(t)), \quad\textrm{ for a.e. } t\in[0,L].$$
\end{theorem}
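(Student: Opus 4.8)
The plan is to re-run the construction from the proof of Theorem~\ref{thm:near_steep}, now exploiting two special features of the Euclidean setting. First, in $\h=\R^n$ bounded closed subsets of sublevel sets are automatically compact, so the compactness hypothesis of Theorem~\ref{thm:near_steep} holds for free and produces a $1$-Lipschitz limit curve $x\colon[0,\eta]\to\h$ with the near-steepest descent properties recorded in Claim~\ref{cl:prop}. Second, metric segments in $\h$ are genuine line segments, and (by the parenthetical remark in that construction) each breakpoint may be taken to be a nearest point $x_{j+1}\in P_{L_{j+1}}(x_j)$ of the successive sublevel set. Thus on the $j$-th piece the piecewise-linear approximant $u_k$ has velocity pointing along $x_{j+1}-x_j$, and elementary properties of Euclidean projections show that $x_j-x_{j+1}$ is a proximal, hence limiting, normal: $x_j-x_{j+1}\in N_{L_{j+1}}(x_{j+1})$.

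The next step converts this geometric normal into subdifferential data. At each breakpoint the construction guarantees a uniform slope lower bound $\slof(x_{j+1})\ge r>0$, so Proposition~\ref{ferslope} gives $0\notin\partial f(x_{j+1})$ and Proposition~\ref{prop:norm_level} applies (the iterates lying on the level $f(x_{j+1})=\alpha_{j+1}$, by continuity on slope-bounded sets), yielding
$$x_j-x_{j+1}\in N_{L_{j+1}}(x_{j+1})\subset \big(\cone \partial f(x_{j+1})\big)\cup\partial^{\infty} f(x_{j+1}).$$
Writing $H(y):=\big(\cone \partial f(y)\big)\cup\partial^{\infty} f(y)$, a short computation using $\partial_c f=\cl\co[\partial f+\partial^{\infty}f]$ together with $\partial f(y)\neq\emptyset$ at the points of interest (where the limiting slope is finite by Claim~\ref{cl:prop}) shows the sandwich $H(y)\subset\cl\cone\partial_c f(y)$ and, after convexification, $\cl\co H(y)=\cl\cone\partial_c f(y)$. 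Since $\cl\cone\partial_c f(y)$ is a closed convex cone, each velocity $u_k'(\tau)$, being a nonnegative multiple of $x_{j+1}-x_j$, lies in $-\cl\cone\partial_c f(x_{j+1})$.

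The heart of the argument, and the step I expect to be hardest, is passing to the limit. The velocities $u_k'$ are uniformly bounded (modulus $r^{-1}$), hence bounded in $L^2(0,\eta;\h)$, so along a subsequence $u_k'\rightharpoonup x'$ weakly; since the $u_k$ converge uniformly to $x$, the weak limit is the a.e.\ derivative of $x$. The obstruction is that the raw inclusion lives in the \emph{nonconvex} set $-H$, so the inclusion cannot be closed by weak convergence alone. The remedy I would pursue is to first establish that $H$ is outer semicontinuous --- a graph-closedness statement proved by a case analysis on whether the normals lie in $\cone\partial f$ or in $\partial^{\infty}f$, using graph-closedness of $\partial f$ and $\partial^{\infty}f$ together with a diagonal extraction --- and then to invoke a Mazur-type convergence theorem (in the spirit of the Aubin--Cellina closure theorem): the convex combinations supplied by Mazur's lemma converge pointwise a.e.\ along a subsequence, and outer semicontinuity of the normalized generators of $H$ together with the closed-convex-cone structure forces the limit into $-\cl\co H(x(\tau))=-\cl\cone\partial_c f(x(\tau))$. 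On the null set where $\|\dot x(\tau)\|=0$ the inclusion is trivial.

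Finally, it would remain to reparametrize $x$ into a curve of near-maximal slope, which is immediate from Theorem~\ref{thm:arc_param} and Proposition~\ref{prop:trans} exactly as in the corollary following Theorem~\ref{thm:near_steep}. Crucially, the subgradient inclusion is preserved under any nondecreasing reparametrization $s$: since $(x\circ s)'(t)=s'(t)\,x'(s(t))$ with $s'(t)\ge0$ and $-\cl\cone\partial_c f$ is a cone, membership $(x\circ s)'(t)\in-\cl\cone\partial_c f(x(s(t)))$ is automatic. This would complete the proof.
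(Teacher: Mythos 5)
Your proposal follows essentially the same route as the paper's own proof: specialize the construction of Theorem~\ref{thm:near_steep} to the Euclidean setting with nearest-point projections onto successive sublevel sets, use Proposition~\ref{prop:norm_level} to place the segment velocities in $-\big[(\cone\partial f)\cup\partial^{\infty}f\big]$, pass to the limit via weak $L^2$ compactness and Mazur's lemma with a pointwise a.e.\ closure argument, identify the resulting closed convex hull with $\cl\cone\partial_c f$, and finally reparametrize. The differences are expository rather than structural --- you spell out the outer-semicontinuity of the normal field, the identification $\cl\co\big[(\cone\partial f)\cup\partial^{\infty}f\big]=\cl\cone\partial_c f$, and the verification of the hypothesis $0\notin\partial f$ at the breakpoints, all of which the paper leaves implicit.
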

\begin{proof}
We can clearly assume that zero is not a subgradient of $f$ at $\bar{x}$. We now specialize the construction of Theorem~\ref{thm:near_steep} to the Euclidean setting. Namely, we can find constants $\eta>0$, $r> 0$ and $C>0$ such that all conditions of Lemma \ref{blemma}
are satisfied for $\xb$,  $\al=f(\xb)-\eta$ and $K=C$. In particular, shrinking $\eta$ we may enforce the inequality $\eta<rC$. Let $0=\tau_0<\tau_1<\ldots<\tau_k=\eta$ be a partition of $[0,\eta]$ 
into $k$ equal parts. 
With this partition we can associate a piecewise linear function $u(\tau)$ as follows.
Set $u_k(0)=\xb$, and inductively define $u_k(\tau_{i+1})$ to be any point belonging to the projection of $u_k(\tau_i)$ onto the lower level set $[f\leq f(\bar{x})-\tau_{i+1}]$, provided that this set is nonempty. It is easy to see that these curves are admissible as the curves $u_k$ constructed in the proof of Theorem~\ref{thm:near_steep}. In particular, we deduce that up to a subsequence, $u_k$ converge uniformly to a Lipschitz curve $x$, and the derivative mappings $\|\dot{u}\|$ converge weakly to $\|\dot{x}\|$. 

Observe that in light of Proposition~\ref{prop:norm_level}, for any index $k$ and any $\tau\in [\tau_i,\tau_{i+1}]$ (for $i=1,\ldots, k$) we have $\dot{u}_k(\tau)\in -\big(\cone\partial f(u_k(\tau_{i+1}))\big)\cup \partial^{\infty} f(u_k(\tau_{i+1}))$.
Furthermore, recall that restricting to a subsequence we may suppose that $\dot{u}_k$ converges weakly to $\dot{x}(\tau)$ in $L^2(0,\eta)$. Mazur's Lemma then implies that a sequence of convex combinations of the form $\sum^{N(k)}_{n=k} \alpha^k_n \dot{u}_n$ converges strongly to $\dot{x}$ as $k$ tends to $\infty$. Since convergence in $L^2(0,\eta)$ implies almost everywhere pointwise convergence, we deduce that for almost every $\tau\in [0,\eta]$, we have $$\Big\|\sum^{N(k)}_{n=k} \alpha^k_n \dot{u}_n(\tau)-\dot{x}(\tau)\Big\|\to 0.$$ 
Therefore if the inclusion $$\dot{x}(\tau)\in -\cl\conv\Big[\big(\cone\, \partial f(x(\tau))\big)\cup\partial^{\infty} f(x(\tau))\Big]$$ did not hold, then we would deduce that there exists a subsequence of vectors $\dot{u}^{k_l}_{n_l}(\tau)$ with $\lim_{l\to\infty} \dot{u}^{k_l}_{n_l}(\tau)$ not lying in the set on the right-hand-side of the inclusion above. This immediately yields a contradiction. After the reparametrization performed in the proof of Theorem~\ref{thm:near_steep}, the curve $\gamma$ is subdifferentiable almost everywhere on $[0,L]$ and consequently satisfies
$$\dot{\gamma}(t)\in -\cl\cone \partial_c f(\gamma(t)), \quad \textrm{ for a.e. } t\in [0,L],$$
as we needed to show.
\end{proof}
\smallskip

The above theorem motivates the question of when curves of near-maximal slope and solutions of subgradient dynamical systems are one and the same, that is when is the rescaling of the gradient $\dot{x}$ in the previous theorem not needed. The following property turns out to be crucial.
\smallskip
\begin{definition}[Chain rule]
{\rm
Consider a lsc function $f\colon\R^n\to\overline{\R}$. We say that $\slof$ {\em admits a chain rule} if for every curve $x\in AC(a,b,\h)$ for which the composition $f\circ x$ is non-increasing and $f$ is subdifferentiable almost everywhere along $x$, the equation
$$(f\circ x)'(t)= \langle \partial f(x(t)),\dot{x}(t)\rangle\quad\quad \textrm{ holds for a.e. } t\in (a,b).$$}
\end{definition}
The following simple proposition shows that whenever $\slof$ admits a chain rule, solutions to subgradient dynamical systems and curves of near-maximal slope coincide.
\begin{proposition}[Subgradient systems \& curves of near-maximal slope]\label{prop:evol_max} {\\}
Consider a lsc function $f\colon\h\to\overline{\R}$ and suppose that $\slof$ admits a chain rule. 
Then for any curve $x\in AC(a,b,\h)$ the following are equivalent.
\begin{enumerate}
\item $x$ is a curve of near-maximal slope. \label{1_ev}
\item $f\circ x$ is nondecreasing and we have 
\begin{equation}\label{eqn:inc_evol_pre}
\dot{x}(t)\in-\partial f(x(t)), \quad \textrm{ a.e. on }[a,b].
\end{equation} \label{2_ev}
\item $f\circ x$ is nondecreasing and we have
\begin{equation}\label{eqn:inc_evol}
\dot{x}(t)\in-\partial f(x(t)), \quad {\textrm and } \quad\|\dot{x}(t)\|= d(0,\partial f(x(t))), \quad \textrm{ a.e. on }[a,b].
\end{equation}\label{3_ev}
\end{enumerate}
\end{proposition}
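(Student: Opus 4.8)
The plan is to prove the cyclic chain of implications $(\ref{3_ev})\Rightarrow(\ref{2_ev})\Rightarrow(\ref{1_ev})\Rightarrow(\ref{3_ev})$, exploiting the chain rule hypothesis at exactly the one place where it is needed. The implication $(\ref{3_ev})\Rightarrow(\ref{2_ev})$ is trivial, since the conditions in $(\ref{3_ev})$ are strictly stronger than those in $(\ref{2_ev})$; one simply drops the norm-matching clause. The only substantive work lies in the remaining two arrows, and I expect the chain rule to be the essential ingredient in proving $(\ref{2_ev})\Rightarrow(\ref{1_ev})$.

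For the step $(\ref{2_ev})\Rightarrow(\ref{1_ev})$, I assume $f\circ x$ is nonincreasing and $\dot x(t)\in-\partial f(x(t))$ a.e. Fix a time $t$ at which $x$ is differentiable, $f$ is subdifferentiable at $x(t)$, and the chain rule display holds; this is a full-measure set. The subgradient inclusion gives $-\dot x(t)\in\partial f(x(t))$, so by Proposition~\ref{ferslope} we immediately have $\|\dot x(t)\|\geq d(0,\partial f(x(t)))=\slof(x(t))$. For the reverse comparison I invoke the chain rule: since $-\dot x(t)\in\partial f(x(t))$, the chain rule equation $(f\circ x)'(t)=\langle\partial f(x(t)),\dot x(t)\rangle$ evaluated against this particular subgradient yields $(f\circ x)'(t)=\langle-\dot x(t),\dot x(t)\rangle=-\|\dot x(t)\|^2$. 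Combining $\|\dot x(t)\|\geq\slof(x(t))$ with this identity gives both the metric-derivative condition $\|\dot x(t)\|=\slof(x(t))$ after I also show the opposite norm inequality, and the value-decrease estimate. Concretely, the Cauchy--Schwarz bound $|(f\circ x)'(t)|=\|\dot x(t)\|^2\geq\|\dot x(t)\|\cdot\slof(x(t))$ together with the general comparison $|(f\circ x)'(t)|\leq\slof(x(t))\cdot\|\dot x(t)\|$ (the limiting-slope analogue of (\ref{eqn:descent}), valid since $-\dot x(t)$ is a subgradient) forces $\|\dot x(t)\|=\slof(x(t))$, whence $(f\circ x)'(t)=-\|\dot x(t)\|^2=-(\slof(x(t)))^2$, which is exactly condition (c) of Definition~\ref{defn:cms}. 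Conditions (a) and (b) follow as well, so $x$ is a curve of near-maximal slope.

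For the step $(\ref{1_ev})\Rightarrow(\ref{3_ev})$, I start from a curve of near-maximal slope, so a.e. $\|\dot x(t)\|=\slof(x(t))$ and $(f\circ x)'(t)\leq-(\slof(x(t)))^2$. By Proposition~\ref{ferslope}, $\slof(x(t))=d(0,\partial f(x(t)))$, and finiteness of this quantity a.e. means $f$ is subdifferentiable a.e. along $x$, so the chain rule applies. Let $v(t)$ denote the minimal-norm element of $\partial f(x(t))$, so $\|v(t)\|=\slof(x(t))=\|\dot x(t)\|$. Using the chain rule, $(f\circ x)'(t)=\langle v(t),\dot x(t)\rangle$ for the relevant subgradient, and combining this with the curve-of-near-maximal-slope inequality $(f\circ x)'(t)\leq-\|v(t)\|^2$ together with $\|\dot x(t)\|=\|v(t)\|$ yields $\langle v(t),\dot x(t)\rangle\leq-\|v(t)\|^2=-\|v(t)\|\,\|\dot x(t)\|$; the Cauchy--Schwarz inequality then forces equality in the form $\dot x(t)=-v(t)$. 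This delivers both $\dot x(t)\in-\partial f(x(t))$ and $\|\dot x(t)\|=\|v(t)\|=d(0,\partial f(x(t)))$, which is precisely (\ref{eqn:inc_evol}).

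The main obstacle is keeping careful track of which subgradient the chain rule evaluates against. The chain rule as stated, $(f\circ x)'(t)=\langle\partial f(x(t)),\dot x(t)\rangle$, must be read as asserting that the inner product $\langle v,\dot x(t)\rangle$ takes the common value $(f\circ x)'(t)$ for every $v\in\partial f(x(t))$; verifying that this reading is the correct and consistent one, and that it combines cleanly with the Cauchy--Schwarz equality case to pin down $\dot x(t)=-v(t)$, is the delicate point. Everything else reduces to Proposition~\ref{ferslope} and elementary inequalities.
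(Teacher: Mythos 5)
Your proof is correct in outline and, for two of the three arrows, coincides with the paper's: the implication $(\ref{3_ev})\Rightarrow(\ref{2_ev})$ is dismissed as trivial in both, and your proof of $(\ref{1_ev})\Rightarrow(\ref{3_ev})$ --- pair the chain rule with the minimal-norm subgradient $v(t)$, use $\|\dot x(t)\|=\slof(x(t))=\|v(t)\|$, and apply the equality case of Cauchy--Schwarz to force $\dot x(t)=-v(t)$ --- is exactly the paper's argument. Where you genuinely diverge is $(\ref{2_ev})\Rightarrow(\ref{1_ev})$: the paper shows that $\dot x(t)$ is orthogonal to the parallel subspace $V=\para \partial f(x(t))$ and then invokes Lemma~\ref{lem:lin} to conclude that $-\dot x(t)$ is the nearest point to the origin in $\aff \partial f(x(t))$, hence in $\partial f(x(t))$ itself; you instead compare the chain-rule value computed against $-\dot x(t)$, namely $(f\circ x)'(t)=-\|\dot x(t)\|^2$, with the value computed against the minimal-norm subgradient, and extract $\|\dot x(t)\|\le \slof(x(t))$ from Cauchy--Schwarz, the reverse inequality coming from the inclusion $-\dot x(t)\in\partial f(x(t))$ and Proposition~\ref{ferslope}. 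Your route is more elementary and bypasses the linear-algebra lemma entirely; the paper's version buys slightly more (it exhibits $-\dot x(t)$ as the metric projection of the origin onto the affine hull), but none of that extra information is needed for the proposition.

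One justification does need repair. You defend the inequality $|(f\circ x)'(t)|\le\slof(x(t))\,\|\dot x(t)\|$ as ``the limiting-slope analogue of (\ref{eqn:descent})'', but the paper explicitly warns that this analogue can fail in general --- that failure is precisely what separates steepest descent from near-steepest descent (see Example~\ref{exa:vs}) --- so it cannot be cited as a general fact. Fortunately the inequality is available to you for a different reason, using only tools you have already deployed: under your (correct) constant-inner-product reading of the chain rule, evaluating against the minimal-norm subgradient $v(t)$ gives
$$|(f\circ x)'(t)|=|\langle v(t),\dot x(t)\rangle|\le \|v(t)\|\,\|\dot x(t)\|=\slof(x(t))\,\|\dot x(t)\|,$$
where the final equality is Proposition~\ref{ferslope}. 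With that substitution your argument for $(\ref{2_ev})\Rightarrow(\ref{1_ev})$, and hence the whole proof, is complete.
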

\begin{proof}
We first prove the implication $\ref{1_ev}\Rightarrow\ref{3_ev}$. 
To this end, suppose that $x$ is a curve of near-maximal slope. Then clearly $f\circ x$ is nondecreasing and we have
$$
\langle \partial f(x(t)),\dot{x}(t)\rangle=(f\circ x)'(t)\leq - \big(\slof(x(t))\big)^2 \quad   {\rm a.e. \  on} \;  [a,b].
$$
Let $v(t)\in \partial f(x(t))$ be a vector of minimal norm. Then we have 
$$\langle\partial f(x(t)),\dot{x}(t)\rangle\geq -\|v(t)\|\cdot \|\dot{x}(t)\|=-\big(\slof(x(t))\big)^2,$$
with equality if and only if $\dot{x}(t)$ and $v(t)$ are collinear. We deduce $\dot{x}(t)=-v(t)$, as claimed.

The implication $\ref{3_ev}\Rightarrow\ref{2_ev}$ is trivial. Hence we focus now on $\ref{2_ev}\Rightarrow\ref{1_ev}$. To this end suppose that $\ref{2_ev}$ holds and observe 
\begin{equation}\label{eqn:const}
\langle \partial f(x(t)),\dot{x}(t)\rangle =-\|\dot{x}(t)\|^2, \quad\textrm{ for a.e. } t\in (a,b).
\end{equation}
Given such $t$ consider the affine subspaces 
$$V=\para \partial f\big(x(t)),$$
Then we have $$\aff \partial f(x(t))=-\dot{x}(t)+V.$$
We claim now that the inclusion $\dot{x}(t)\in V^{\perp}$ holds. To see this, observe that for any real $\lambda_i$ and for vectors $v_i\in\partial f\big(x(t)\big)$, we have
$$\langle \dot{x}(t), \sum_{i=1}^k \lambda_i(v_i+\dot{x}(t))\rangle=\sum_{i=1}^k\lambda_i \big[\langle \dot{x}(t),v_i  \rangle+ \|\dot{x}(t)\|^2 \big]=0,$$
where the latter equality follows from (\ref{eqn:const}). Hence the inclusion $$-\dot{x}(t)\in (-\dot{x}(t) + V) \cap V^{\perp},$$ holds. Consequently, using Lemma~\ref{lem:lin}, we deduce that $-\dot{x}(t)$ achieves the distance of the affine space, $\aff \partial f(x(t))$, to the origin. On the other hand, the inclusion $-\dot{x}(t)\in \partial f \big(x(t)\big)$ holds, and hence $-\dot{x}(t)$ actually achieves the distance of $\partial f(x(t))$ to the origin. The result follows.
\end{proof}
\smallskip

In light of the theorem above, it is interesting to understand for which functions $f$ the slope $\slof$ admits a chain rule. Subdifferentially regular (in particular, all lsc convex) functions  furnish a simple example. The convex case can be found in \cite[Lemma 3.3, p 73]{Brezis}(Chain rule).
\smallskip 
\begin{lemma}[Chain rule under subdifferential regularity]\label{lem:dir}
Consider a subdifferentially regular function $f\colon\R^n\to\overline{\R}$. Then $\slof$ {\em admits a chain rule}.
\end{lemma}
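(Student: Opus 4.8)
The plan is to reduce the chain rule to a pointwise statement holding at almost every $t$, and then to verify it directly from the defining inequality of the Fr\'echet subdifferential, invoking subdifferential regularity only to pass from Fr\'echet subgradients to limiting subgradients.

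First I would isolate a full-measure subset $T\subset (a,b)$ on which three things happen simultaneously: (i) the curve $x$ is differentiable, so that $x(t+h)=x(t)+h\dot{x}(t)+o(|h|)$ and in particular $\|x(t+h)-x(t)\|=O(|h|)$; (ii) the composition $f\circ x$ is differentiable, which is legitimate because $f\circ x$ is non-increasing and monotone real functions are differentiable almost everywhere with finite two-sided derivative; and (iii) $f$ is subdifferentiable at $x(t)$, so that $f(x(t))$ is finite and, by subdifferential regularity, the equality $\partial f(x(t))=\hat{\partial}f(x(t))$ holds (as already recorded in the excerpt). The intersection of the three underlying full-measure sets is again of full measure.

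Next, fix $t\in T$ and an arbitrary $v\in\partial f(x(t))=\hat{\partial}f(x(t))$. By the definition of the Fr\'echet subdifferential at $x(t)$, tested against $y=x(t+h)$, I would write
$$f(x(t+h))\ge f(x(t))+\langle v,\, x(t+h)-x(t)\rangle+o(\|x(t+h)-x(t)\|).$$
Substituting $x(t+h)-x(t)=h\dot{x}(t)+o(|h|)$ together with $\|x(t+h)-x(t)\|=O(|h|)$ collapses the right-hand side to $f(x(t))+h\langle v,\dot{x}(t)\rangle+o(|h|)$; this remains valid when $\dot{x}(t)=0$, since the displacement is then itself $o(|h|)$. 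Dividing by $h>0$ and letting $h\downarrow 0$ yields $(f\circ x)'(t)\ge\langle v,\dot{x}(t)\rangle$, while dividing by $h<0$ flips the inequality and gives $(f\circ x)'(t)\le\langle v,\dot{x}(t)\rangle$; here I use that the two-sided derivative $(f\circ x)'(t)$ exists by (ii), so both one-sided limits coincide with it. Hence $(f\circ x)'(t)=\langle v,\dot{x}(t)\rangle$ for every $v\in\partial f(x(t))$, which is exactly the asserted chain rule.

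The computation itself is routine; the only genuinely delicate points are the bookkeeping of the $o$-terms, especially confirming the conclusion at points where $\dot{x}(t)=0$, and the logical role of regularity. Without regularity the displayed inequality is available only for $v\in\hat{\partial}f(x(t))$, and limiting subgradients lying outside $\hat{\partial}f(x(t))$ could violate the identity; subdifferential regularity is precisely what guarantees $\partial f(x(t))=\hat{\partial}f(x(t))$ and thereby extends the argument to every element of the limiting subdifferential.
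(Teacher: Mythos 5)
Your proposal is correct and follows essentially the same route as the paper's proof: test the Fr\'echet subgradient inequality along the curve at points where both $x$ and $f\circ x$ are differentiable, obtain $(f\circ x)'(t)\ge\langle v,\dot{x}(t)\rangle$ from the right-hand difference quotient and the reverse inequality from the left-hand one, and invoke subdifferential regularity to replace $\hat{\partial}f(x(t))$ by $\partial f(x(t))$. Your treatment of the $o$-terms and of the case $\dot{x}(t)=0$ is merely a more explicit rendering of the steps the paper leaves implicit.
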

\begin{proof}
Consider a curve $x\colon (a,b)\to\R^n$. Suppose that for some real $t\in (a,b)$ both $x$ and $f\circ x$ are differentiable at $t$ and $\partial f(x(t))$ is nonempty. We then deduce
\begin{align*}
\frac{d(f\circ x)}{dt}(t)&=\lim_{\epsilon\downarrow 0} \frac{f(x(t+\epsilon))-f(x(t))}{\epsilon}\\
&\geq \langle v,\dot{x}(t)\rangle,\quad \textrm{ for any } v\in\hat{\partial} f(x(t).
\end{align*}
Similarly we have 
\begin{align*}
\frac{d(f\circ x)}{dt}(t)&=\lim_{\epsilon\downarrow 0} \frac{f(x(t-\epsilon))-f(x(t))}{-\epsilon}\\
&\leq \langle v,\dot{x}(t)\rangle,\quad \textrm{ for any } v\in\hat{\partial} f(x(t)).
\end{align*}
Hence the equation
$$\frac{d(f\circ x)}{dt}(t)=\langle \hat{\partial} f(x(t))\dot{x}(t)\rangle\quad\textrm{ holds},$$ and the result follows.  
\end{proof}
\smallskip

Subdifferentially regular functions are very special, however. In particular, many nonpathological functions such as $-\|\cdot\|$ are not subdifferentially regular. So it is natural to consider prototypical nonpathological functions appearing often in practice --- those that are semi-algebraic. For an extensive discussion on semi-algebraic geometry, see the monographs of Basu-Pollack-Roy \cite{ARAG}, Lou van den Dries \cite{LVDB}, and Shiota \cite{Shiota}. For a quick survey, see the article of van den Dries-Miller \cite{DM} and the surveys of Coste \cite{Coste-semi, Coste-min}. Unless otherwise stated, we follow the notation of \cite{DM} and \cite{Coste-semi}.

A {\em semi-algebraic} set $S\subset\R^n$ is a finite union of sets of the form $$\{x\in \R^n: P_1(x)=0,\ldots,P_k(x)=0, Q_1(x)<0,\ldots, Q_l(x)<0\},$$ where $P_1,\ldots,P_k$ and $Q_1,\ldots,Q_l$ are polynomials in $n$ variables. In other words, $S$ is a union of finitely many sets, each defined by finitely many polynomial equalities and inequalities. A function $f\colon\R^n\to\overline{\R}$ is {\em semi-algebraic} if $\mbox{\rm epi}\, f\subset\R^{n+1}$ is a semi-algebraic set. 

Our goal now is to analyze the chain rule for the slope in the context of semi-algebraic functions. Before we proceed, we need to recall the notion of tangent cones.
\begin{definition}[Tangent cone]
{\rm Consider a set $Q\subset\R^n$ and a point $\bar{x}\in Q$. Then the {\em tangent cone} to $Q$ at $\bar{x}$, is simply the set 
$$T_Q(\bar{x}):=\Big\{\lim_{i\to\infty} \lambda_i (x_i-\bar{x}): \lambda_i\uparrow \infty \textrm{ and } x_i\in Q\Big\}.$$
}
\end{definition}

We now record the following simple lemma, whose importance in the context of semi-algebraic geometry will become apparent shortly. We omit the proof since it is rather standard.
\smallskip
\begin{lemma}[Generic tangency]\label{lem:touch}
Consider a set $M\subset\R^n$ and a path $x\colon [0,\eta]\to \R^n$ that is differentiable almost everywhere on $[0,\eta]$. Then for almost every $t\in [0,\eta]$, the implication
$$x(t)\in M \quad\Longrightarrow\quad \dot{x}(t)\in T_M(x(t)),\quad \textrm{ holds}.$$
\end{lemma}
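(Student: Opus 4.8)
The plan is to prove the statement by reducing it to a pointwise geometric fact about how a differentiable curve approaches a set, combined with a genericity argument that handles the exceptional times. Fix a point $t$ at which $x$ is differentiable (these constitute a full-measure subset of $[0,\eta]$, so it suffices to control the behaviour there). Suppose $x(t)\in M$; I want to show $\dot{x}(t)\in T_M(x(t))$ unless $t$ lies in a null set. If $\dot{x}(t)=0$, the conclusion is immediate since $0\in T_M(x(t))$ always. So assume $\dot{x}(t)\neq 0$, and consider the set of ``bad'' times
$$
B:=\{t\in[0,\eta]: x \textrm{ differentiable at } t,\; x(t)\in M,\; \dot{x}(t)\notin T_M(x(t))\}.
$$
The goal is to show $B$ has Lebesgue measure zero.

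The key observation is a pointwise density argument at each bad time. At a point $t\in B$, differentiability gives the first-order expansion $x(s)=x(t)+(s-t)\dot{x}(t)+o(|s-t|)$. If infinitely many nearby times $s$ (accumulating at $t$ from one side) satisfied $x(s)\in M$, then forming the difference quotients $\frac{x(s)-x(t)}{s-t}$ and passing to the limit would produce $\pm\dot{x}(t)\in T_M(x(t))$, contradicting $\dot{x}(t)\notin T_M(x(t))$ (at least on the side where the sign matches; the other sign is handled symmetrically, and since $T_M$ need not be a subspace one must be slightly careful to track which one-sided approach is excluded). The upshot is that for each $t\in B$ there is a punctured neighbourhood, or at least a one-sided punctured neighbourhood, of $t$ in which $x(s)\notin M$. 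In other words, each bad time is isolated within the set $x^{-1}(M)$ from at least one side.

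First I would make this density statement precise: for every $t\in B$ there exists $\delta>0$ such that $x(s)\notin M$ for all $s\in(t,t+\delta)$ (say), or the symmetric one-sided statement holds. Then I would invoke a standard measure-theoretic fact: a set each of whose points is a one-sided isolated point of a given set is countable, hence null. Concretely, the times at which $x(s)\notin M$ on a full right-neighbourhood are, by the Lebesgue density theorem, points where $x^{-1}(M)$ has right density zero; but a bad time $t$ is itself in $x^{-1}(M)$, so if $B$ had positive measure, then by the density theorem almost every point of $B$ would be a point of density one of $x^{-1}(M)$, contradicting the one-sided isolation just established. This is exactly the kind of routine argument the authors elide with ``we omit the proof since it is rather standard.''

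The main obstacle, and the reason the lemma needs care rather than being entirely trivial, is the non-symmetry of the tangent cone $T_M$: excluding one sign of $\dot{x}(t)$ from $T_M(x(t))$ only rules out approach by times $s$ on one side of $t$, so one cannot immediately conclude two-sided isolation of $t$ within $x^{-1}(M)$. The clean way to circumvent this is to note that $\dot{x}(t)\notin T_M(x(t))$ forbids $x(s_i)\in M$ for any sequence $s_i\downarrow t$ (since such a sequence would force $\dot{x}(t)\in T_M(x(t))$), giving right-sided isolation; that single-sided statement is already enough to run the density argument above and conclude $B$ is null. I would therefore organize the proof around right-density, apply the Lebesgue density theorem to $x^{-1}(M)$, and derive the contradiction from positive measure of $B$, thereby establishing that the implication holds for almost every $t\in[0,\eta]$.
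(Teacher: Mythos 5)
There is no proof in the paper to compare against: the authors state the lemma and explicitly omit its proof as ``rather standard.'' Your argument is a correct rendering of that standard argument, and its key step is exactly right: if $t$ is a ``bad'' time, then no sequence $s_i\downarrow t$ can satisfy $x(s_i)\in M$, since with $\lambda_i:=\frac{1}{s_i-t}\uparrow\infty$ the vectors $\lambda_i\bigl(x(s_i)-x(t)\bigr)$ converge to $\dot{x}(t)$ by differentiability, which by the paper's definition of $T_M$ would place $\dot{x}(t)$ in $T_M(x(t))$; hence every bad time is right-isolated in $x^{-1}(M)$. You are also right to flag the one-sidedness: approach from the left only yields $-\dot{x}(t)\in T_M(x(t))$, which is no contradiction, but right-isolation alone suffices to finish.

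One caution on how you finish: of the two routes you sketch, the countability fact is the correct one to rely on, and your closing plan (``apply the Lebesgue density theorem to $x^{-1}(M)$, and derive the contradiction from positive measure of $B$'') should be dropped rather than made primary. The lemma assumes nothing about $M$ --- it need not be closed, Borel, or even Lebesgue measurable --- and $x$ is merely differentiable a.e., so neither $x^{-1}(M)$ nor the bad set $B$ need be measurable; the density theorem does not apply to them as stated, and one would have to pass to outer density via measurable hulls to repair that route. The elementary fact you cite needs no measurability at all and closes the proof by itself: the set of points of $A:=x^{-1}(M)$ that are right-isolated in $A$ is countable (assign to each such $t$ a rational $q_t>t$ with $(t,q_t)\cap A=\emptyset$; if $t_1<t_2$ had $q_{t_1}=q_{t_2}=q$, then $t_2\in(t_1,q)\cap A=\emptyset$, so the assignment is injective), hence Lebesgue null, and $B$ is contained in this set. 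Together with the null set of non-differentiability points, this gives the lemma for arbitrary $M$.
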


The following is a key property of semi-algebraic functions that we will exploit \cite[Proposition 4]{Lewis-Clarke}. 
\smallskip
\begin{theorem}[Projection formula]\label{thm:proj}
Consider a lsc semi-algebraic function $f\colon\R^n\to\overline{\R}$. Then there exists a partition of $\dom f$ into finitely many ${\bf C}^1$-manifolds $\{M_i\}$ so that $f$ restricted to each manifold $M_i$ is ${\bf C}^1$-smooth. Moreover for any point $x$ lying in a manifold $M_i$, the inclusion $$\partial_c f(x)\subset \nabla g(x) +N_{M_i}(x) \quad\textrm{ holds},$$
where $g\colon\R^n\to\R$ is any ${\bf C}^1$-smooth function agreeing with $f$ on a neighborhood of $x$ in $M_i$. 
\end{theorem}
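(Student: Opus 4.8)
The plan is to reduce the Clarke inclusion to two separate statements about the limiting and horizon subdifferentials, and then to lift everything to the graph $\Gamma := \gph f \subset \h\times\R$, where the analysis becomes purely geometric. First observe that for a ${\bf C}^1$-manifold $M_i$ the normal cone $N_{M_i}(x)$ is the full normal space $T_{M_i}(x)^{\perp}$, so $\nabla g(x) + N_{M_i}(x)$ is a closed affine subspace; it is moreover independent of the chosen extension $g$, since two smooth extensions of $f|_{M_i}$ differ by a function vanishing on $M_i$, whose gradient at $x$ lies in $N_{M_i}(x)$. Because $\partial_c f = \cl\co[\partial f + \partial^{\infty} f]$ and a closed affine subspace is stable under convex hulls and under adding the subspace $N_{M_i}(x)$, it suffices to establish the two inclusions
$$
\partial f(x) \subset \nabla g(x) + N_{M_i}(x), \qquad \partial^{\infty} f(x) \subset N_{M_i}(x).
$$
For the partition itself I would invoke the ${\bf C}^1$ stratification theorem for semi-algebraic sets (see e.g. \cite{Shiota,LVDB}) applied to $\Gamma$: there is a finite partition of $\Gamma$ into ${\bf C}^1$ strata forming a Whitney $(a)$-regular stratification, which may be refined so that the coordinate projection $\pi\colon(x,r)\mapsto x$ restricts to a diffeomorphism of each stratum $S_i$ onto a ${\bf C}^1$-manifold $M_i$. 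The $M_i$ then partition $\dom f$, and $f|_{M_i}$ is ${\bf C}^1$ with $S_i = \gph(f|_{M_i})$.

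The key elementary step is a dictionary between Fr\'echet subgradients and normals to the strata of $\Gamma$. If $y\in M_{i'}$ and $v\in\hat{\partial}f(y)$, then testing the Fr\'echet inequality along directions tangent to $M_{i'}$ (which can be realized by ${\bf C}^1$ curves in $M_{i'}$ through $y$ with velocity $\pm h$) forces $\langle v,h\rangle = \langle \nabla(f|_{M_{i'}})(y),h\rangle$ for every $h\in T_{M_{i'}}(y)$. Equivalently, $(v,-1)\perp T_{S_{i'}}(y,f(y))$, so $(v,-1)$ lies in the normal space $N_{S_{i'}}(y,f(y))$.

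Now fix $x\in M_i$ and $v\in\partial f(x)$, with witnessing sequences $x_j\to x$, $f(x_j)\to f(x)$, and $v_j\in\hat{\partial}f(x_j)$ with $v_j\to v$. Passing to a subsequence we may assume all $x_j$ lie in a single stratum $M_{i'}$, so $(v_j,-1)\in N_{S_{i'}}(x_j,f(x_j))$ by the dictionary. Whitney $(a)$-regularity of the graph stratification guarantees that, along a further subsequence, $T_{S_{i'}}(x_j,f(x_j))$ converges in the Grassmannian to a space containing $T_{S_i}(x,f(x))$, hence the orthogonal complements converge into $N_{S_i}(x,f(x))$; taking limits gives $(v,-1)\in N_{S_i}(x,f(x))$. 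Reading off orthogonality to $T_{S_i}(x,f(x)) = \{(h,\langle\nabla g(x),h\rangle): h\in T_{M_i}(x)\}$ yields $v-\nabla g(x)\in N_{M_i}(x)$, as required. The horizon case is identical except that the witnessing vectors are scaled by $\tau_j\downarrow 0$; since each $N_{S_{i'}}$ is a subspace we still have $(\tau_j v_j,-\tau_j)\in N_{S_{i'}}(x_j,f(x_j))$, the limit $(v,0)$ lies in $N_{S_i}(x,f(x))$, and orthogonality to $T_{S_i}(x,f(x))$ now forces $v\in N_{M_i}(x)$.

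The main obstacle is controlling the tangential gradients $\nabla(f|_{M_{i'}})(x_j)$, which may blow up as $x_j$ approaches the lower-dimensional stratum $M_i$. Lifting to the graph and carrying the pair $(v,-1)$ sidesteps this completely: the Whitney $(a)$ condition makes the graph-normal spaces converge with no quantitative control on the gradients, and the very same convergence, after rescaling, delivers the horizon inclusion. The remaining care lies in checking that the semi-algebraic stratification can be chosen simultaneously ${\bf C}^1$, $(a)$-regular, adapted to $\Gamma$, and compatible with the projection $\pi$ — all standard outputs of the semi-algebraic stratification machinery.
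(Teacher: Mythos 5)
The paper does not actually prove this theorem: it imports it verbatim from the literature, citing \cite[Proposition 4]{Lewis-Clarke}, so there is no in-paper argument to compare against. Your proof is correct, and it is essentially a faithful reconstruction of the standard argument behind that cited result: lift to the graph, identify Fr\'echet subgradients $v\in\hat{\partial}f(y)$ with graph normals $(v,-1)\in N_{\gph(f|_{M_{i'}})}(y,f(y))$, pass to the limit using Whitney condition $(a)$ (with the same-stratum case handled by plain ${\bf C}^1$ continuity of tangent spaces), rescale to get the horizon inclusion, and finish by noting that $\nabla g(x)+N_{M_i}(x)$ is a closed affine subspace, hence absorbs the convexification defining $\partial_c f$. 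The only ingredients you leave as ``standard machinery'' --- a finite semi-algebraic ${\bf C}^1$ Whitney-$(a)$ stratification of $\gph f$ compatible with the projection onto $\dom f$ --- are indeed standard in the semi-algebraic category, so the proposal stands as a complete proof of the quoted statement.
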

\smallskip

\begin{theorem}[Semi-algebraic chain rule for the slope]
Consider a lsc semi-algebraic function $f\colon\R^n\to\overline{\R}$ that is locally Lipschitz continuous on its domain. Consider also a curve $\gamma\in AC(a,b,\h)$ whose image is contained in the domain of $f$. Then equality
$$(f\circ\gamma)'(t)= \langle \partial f(\gamma(t)),\dot{x}(t)\rangle=\langle \partial_c f(\gamma(t)),\dot{x}(t)\rangle$$
holds for almost every $t\in [a,b]$.
In particular, the slope $\slof$ admits a chain rule.
\end{theorem}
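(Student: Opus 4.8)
The plan is to establish the chain rule $(f\circ\gamma)'(t)=\langle\partial_c f(\gamma(t)),\dot\gamma(t)\rangle$ for a.e.\ $t$, from which the claimed equality with $\partial f$ will follow. First I would invoke the Projection Formula (Theorem~\ref{thm:proj}) to obtain a finite partition of $\dom f$ into ${\bf C}^1$-manifolds $\{M_i\}$ on each of which $f$ agrees with a ${\bf C}^1$-smooth function. Since there are finitely many manifolds, for a.e.\ $t$ the point $\gamma(t)$ lies in exactly one manifold, say $M_i$. The strategy is to show that on the (measurable) set of times for which $\gamma(t)\in M_i$, both the value of $(f\circ\gamma)'(t)$ and the inner products $\langle v,\dot\gamma(t)\rangle$ for $v\in\partial_c f(\gamma(t))$ reduce to the smooth quantity $\langle\nabla g(\gamma(t)),\dot\gamma(t)\rangle$, where $g$ is a smooth function agreeing with $f$ near $\gamma(t)$ in $M_i$.

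The two key ingredients are the Generic Tangency Lemma (Lemma~\ref{lem:touch}) and the inclusion from the Projection Formula. Applying Lemma~\ref{lem:touch} with $M=M_i$, I get that for a.e.\ $t$ with $\gamma(t)\in M_i$ the velocity satisfies $\dot\gamma(t)\in T_{M_i}(\gamma(t))$; since $M_i$ is a ${\bf C}^1$-manifold this tangent cone is exactly the tangent space, and hence $\dot\gamma(t)$ is orthogonal to $N_{M_i}(\gamma(t))$. The Projection Formula gives $\partial_c f(\gamma(t))\subset\nabla g(\gamma(t))+N_{M_i}(\gamma(t))$, so for any $v\in\partial_c f(\gamma(t))$ we may write $v=\nabla g(\gamma(t))+n$ with $n\in N_{M_i}(\gamma(t))$, whence $\langle v,\dot\gamma(t)\rangle=\langle\nabla g(\gamma(t)),\dot\gamma(t)\rangle+\langle n,\dot\gamma(t)\rangle=\langle\nabla g(\gamma(t)),\dot\gamma(t)\rangle$, a single value independent of the choice of $v$. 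This collapses the whole set $\langle\partial_c f(\gamma(t)),\dot\gamma(t)\rangle$ to the scalar $\langle\nabla g(\gamma(t)),\dot\gamma(t)\rangle$, and the same then holds a fortiori for the smaller set $\langle\partial f(\gamma(t)),\dot\gamma(t)\rangle$.

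It remains to identify this common value with the derivative $(f\circ\gamma)'(t)$. Near such a $t$, the curve stays (infinitesimally, to first order) tangent to $M_i$, and $f$ coincides with the smooth extension $g$ on $M_i$; thus I would argue that $(f\circ\gamma)'(t)=(g\circ\gamma)'(t)=\langle\nabla g(\gamma(t)),\dot\gamma(t)\rangle$ by the ordinary ${\bf C}^1$ chain rule applied to $g$. Care is needed because $\gamma$ need not remain in $M_i$ for a whole interval around $t$, only that $\gamma(t)\in M_i$; the derivative of $f\circ\gamma$ still exists a.e.\ (as $\gamma\in AC$ and $f$ is locally Lipschitz on $\dom f$, so $f\circ\gamma$ is absolutely continuous), and the generic tangency together with the first-order behavior of $f$ along $M_i$ pins the difference quotient of $f\circ\gamma$ to that of $g\circ\gamma$. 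This last identification is the main obstacle: one must justify that the a.e.-defined derivative of $f\circ\gamma$ genuinely equals $\langle\nabla g(\gamma(t)),\dot\gamma(t)\rangle$ rather than merely being bracketed by sub- and super-derivatives, and handle the null set of times where $\gamma(t)$ lies on the lower-dimensional strata separating the manifolds. Finally, combining the chain-rule equality with Definition of the chain rule and Proposition~\ref{ferslope} (which gives $\slof(\bar x)=d(0,\partial f(\bar x))$ and relates critical points) yields that $\slof$ admits a chain rule, completing the proof.
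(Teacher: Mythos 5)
Your skeleton coincides with the paper's: stratify $\dom f$ by the projection formula (Theorem~\ref{thm:proj}), apply generic tangency (Lemma~\ref{lem:touch}) to get $\dot{\gamma}(t)\in T_{M_i}(\gamma(t))$ for a.e.\ $t$ with $\gamma(t)\in M_i$, and collapse the pairing $\langle \partial_c f(\gamma(t)),\dot{\gamma}(t)\rangle$ to the single value $\langle \nabla g(\gamma(t)),\dot{\gamma}(t)\rangle$ by orthogonality of $\dot{\gamma}(t)$ to $N_{M_i}(\gamma(t))$; that part of your argument is correct and matches the paper. The genuine gap is exactly the step you yourself flag as ``the main obstacle'': identifying $(f\circ\gamma)'(t)$ with $\langle \nabla g(\gamma(t)),\dot{\gamma}(t)\rangle$. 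Your proposed route, $(f\circ\gamma)'(t)=(g\circ\gamma)'(t)$, is not justified as stated: $g$ is an arbitrary ${\bf C}^1$ extension of $f$ restricted to $M_i$, and off the manifold $g$ bears no relation whatsoever to $f$, while $\gamma(t+\e)$ typically leaves $M_i$. Asserting that the two difference quotients agree is precisely what must be proved, and your proposal acknowledges this without resolving it.

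The paper closes this gap with a projection trick, and this is where the hypothesis that $f$ is locally Lipschitz continuous on its domain actually enters --- a hypothesis your argument uses only to get absolute continuity of $f\circ\gamma$. Since $\dot{\gamma}(t)\in T_{M_i}(\gamma(t))$ and $M_i$ is a ${\bf C}^1$-manifold, one has $d(\gamma(t+\e),M_i)=o(\e)$; choosing a nearest point $P_{M_i}(\gamma(t+\e))\in M_i$, both it and $\gamma(t+\e)$ lie in $\dom f$, so local Lipschitz continuity of $f$ on its domain gives $f(\gamma(t+\e))=f\big(P_{M_i}(\gamma(t+\e))\big)+o(\e)=g\big(P_{M_i}(\gamma(t+\e))\big)+o(\e)$, and expanding the ${\bf C}^1$ function $g$ around $\gamma(t)$, using $P_{M_i}(\gamma(t+\e))=\gamma(t)+\e\dot{\gamma}(t)+o(\e)$, yields $(f\circ\gamma)'(t)=\langle \nabla g(\gamma(t)),\dot{\gamma}(t)\rangle$. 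This comparison through the stratum is the missing idea; without it there is no control on the values of $f$ at the off-manifold points $\gamma(t+\e)$. A secondary correction: the set of times at which $\gamma$ lies on lower-dimensional strata need \emph{not} be null --- the curve may run along such a stratum for a whole interval --- but no such claim is needed, since the argument above (and the paper's) applies verbatim to every stratum $M_i$, of any dimension.
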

\begin{proof}
Consider the partition of $\dom f$ into finitely many ${\bf C}^1$-manifolds $\{M_i\}$, guaranteed to exist by Theorem~\ref{thm:proj}. 
We first record some preliminary observations. Clearly both $x$ and $f\circ x$ are differentiable at a.e. $t\in (0,T)$. Furthermore, in light of Lemma~\ref{lem:touch}, for any index $i$ and for a.e. $t\in (0,\eta)$ the implication
$$x(t)\in M_i\quad\Longrightarrow \quad\dot{x}(t)\in T_{M_i}(x(t)),\quad \textrm{ holds}.$$
Now suppose that for such $t$, the point $x(t)$ lies in a manifold $M_i$ and let $g\colon\R^n\to\R$ be a ${\bf C}^1$-smooth function agreeing with $f$ on a neighborhood of $x(t)$ in $M_i$. Lipschitzness of $f$ on its domain then easily implies 
\begin{align*}
\frac{d(f\circ x)}{dt}(t)&=\lim_{\epsilon\downarrow 0} \frac{f(x(t+\epsilon))-f(x(t))}{\epsilon}\\
&=\lim_{\epsilon\downarrow 0} \frac{f(P_{M_i}(x(t+\epsilon)))-f(x(t))}{\epsilon}\\
&=\lim_{\epsilon\downarrow 0} \frac{g(P_{M_i}(x(t+\epsilon)))-g(x(t))}{\epsilon}\\
&=\frac{d}{dt}\, g\circ P_{M_i}\circ x(t)=\langle \nabla g(x(t)),\dot{x}(t)\rangle\\
&=\langle \nabla g(x(t))+N_{M_i}(x(t)),\dot{x}(t)\rangle.
\end{align*}
The result follows.
\end{proof}
\smallskip

A noteworthy point about the theorem above is the appearance of the Clarke subdifferential in the chain rule. As a result,  we can strengthen Theorem~\ref{prop:evol_max} in the context of lsc semi-algebraic functions $f\colon\R^n\to\overline{\R}$ that are locally Lipschitz continuous on their domains. The proof is analogous to that of Theorem~\ref{prop:evol_max}.
\smallskip

\begin{proposition}[Semi-algebraic equivalence]
Consider a lsc semi-algebraic function $f\colon\h\to\overline{\R}$ that is locally Lipschitz continuous on its domain.
Then for any curve $x\in AC(a,b,\h)$ the following are equivalent.
\begin{enumerate}
\item $x$ is a curve of near-maximal slope. 
\item $f\circ x$ is nondecreasing and we have 
\begin{equation*}
\dot{x}\in-\partial f(x), \quad \textrm{ a.e. on }[a,b].
\end{equation*} 
\item $f\circ x$ is nondecreasing and we have
\begin{equation*}
\dot{x}\in-\partial f(x), \quad\|\dot{x}\|= d(0,\partial f(x)), \quad {\textrm and }\quad \|\dot{x}\|= d(0,\partial_c f(x)) \quad \textrm{ a.e. on }[a,b].
\end{equation*}
\end{enumerate}
\end{proposition}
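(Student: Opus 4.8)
The plan is to reduce the statement to the already-established Proposition~\ref{prop:evol_max} and then to graft on the single extra piece of information carried by the Clarke subdifferential. The semi-algebraic chain rule proved just above shows in particular that $\slof$ admits a chain rule in the sense of our definition; hence Proposition~\ref{prop:evol_max} applies without change and yields the equivalence of conditions~(1) and~(2) here with the auxiliary statement that $f\circ x$ is nonincreasing, $\dot{x}(t)\in-\partial f(x(t))$, and $\|\dot{x}(t)\|=d(0,\partial f(x(t)))$ for a.e.\ $t$. Consequently the only genuinely new assertion is that, along such a curve, one \emph{moreover} has $\|\dot{x}(t)\|=d(0,\partial_c f(x(t)))$ for a.e.\ $t$; once this is in hand, condition~(3) becomes equivalent to the other two and we are done.

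To prove this last equality I would fix a time $t$ at which the auxiliary condition, differentiability of $x$ and of $f\circ x$, and both chain-rule identities all hold. Because $f$ is locally Lipschitz on its domain, $\partial^{\infty}f(x(t))=\{0\}$, so $\partial f(x(t))$ is nonempty and compact and $\partial_c f(x(t))=\co\,\partial f(x(t))$; in particular $-\dot{x}(t)\in\partial f(x(t))\subset\partial_c f(x(t))$, which gives the easy bound $d(0,\partial_c f(x(t)))\le\|{-\dot{x}(t)}\|=\|\dot{x}(t)\|$. The reverse inequality is exactly where the Clarke half of the semi-algebraic chain rule is indispensable: that theorem supplies $(f\circ x)'(t)=\langle w,\dot{x}(t)\rangle$ for \emph{every} $w\in\partial_c f(x(t))$, while the limiting chain rule applied to $-\dot{x}(t)\in\partial f(x(t))$ gives $(f\circ x)'(t)=\langle-\dot{x}(t),\dot{x}(t)\rangle=-\|\dot{x}(t)\|^2$. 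Evaluating the Clarke identity at a minimal-norm element $w_c\in\partial_c f(x(t))$ and using Cauchy--Schwarz yields $-\|\dot{x}(t)\|^2=\langle w_c,\dot{x}(t)\rangle\ge-\|w_c\|\,\|\dot{x}(t)\|$, hence $d(0,\partial_c f(x(t)))=\|w_c\|\ge\|\dot{x}(t)\|$. Combining the two bounds gives $\|\dot{x}(t)\|=d(0,\partial_c f(x(t)))=d(0,\partial f(x(t)))$, as required.

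The remaining directions demand no new ideas. The implication (3)$\Rightarrow$(2) follows by simply discarding the two norm identities, and the equivalence of (1) and (2) is a verbatim transcription of Proposition~\ref{prop:evol_max}, the (2)$\Rightarrow$(1) half relying only on the limiting chain rule together with the affine-span characterization of the minimal-norm subgradient furnished by Lemma~\ref{lem:lin}. I expect the sole subtle point---and the unique place where both the Lipschitz-on-its-domain hypothesis and the \emph{Clarke} form of the chain rule are truly needed---to be the coincidence $d(0,\partial_c f(x(t)))=d(0,\partial f(x(t)))$ a.e.; its proof amounts to observing that the chain-rule identity forces the minimal-norm Clarke subgradient to be collinear with $-\dot{x}(t)$ and of equal length, so that it already lies in $\partial f(x(t))$.
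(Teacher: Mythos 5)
Your proof is correct and follows essentially the same route as the paper: the paper offers no separate argument for this proposition, saying only that the proof is analogous to that of Proposition~\ref{prop:evol_max}, with the Clarke half of the semi-algebraic chain rule supplying the extra identity $\|\dot{x}\|=d(0,\partial_c f(x))$ via exactly the minimal-norm/Cauchy--Schwarz computation you perform; your only real difference is that you invoke Proposition~\ref{prop:evol_max} as a black box rather than rewriting its proof, which is a harmless (arguably cleaner) packaging of the same idea. One caveat: your claim that local Lipschitz continuity of $f$ on its domain forces $\partial^{\infty}f(x(t))=\{0\}$ (hence compactness of $\partial f(x(t))$ and $\partial_c f(x(t))=\co\partial f(x(t))$) is false when $x(t)$ is not an interior point of $\dom f$ --- indicator functions of closed sets are locally Lipschitz on their domains yet have nontrivial horizon subdifferentials at boundary points, and the hypothesis here is Lipschitz continuity relative to the domain, not on a neighborhood. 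Fortunately this remark is not load-bearing: the only facts your argument actually uses are $-\dot{x}(t)\in\partial f(x(t))\subset\partial_c f(x(t))$ (the first inclusion coming from condition (2)/(3), the second holding in general since $\partial_c f$ always contains $\partial f$ wherever the latter is nonempty) together with the existence of a minimal-norm element of the nonempty closed set $\partial_c f(x(t))$, all of which are available without the erroneous claim.
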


We end this section by showing that for semi-algebraic functions, bounded curves of near-maximal slope necessarily have bounded length. The argument we present is not new; rather we include this discussion with the purpose of painting a more complete picture for the reader, illustrating that semi-algebraic functions provide an appealing setting for the analysis of steepest descent curves. We begin with the celebrated Kurdyka-{\L}ojasiewicz inequality.
\smallskip
\begin{definition}[Kurdyka-{\L}ojasiewicz inequality]
{\rm
A function $f\colon\R^n\to\overline{\R}$ is said to satisfy the {\em Kurdyka-{\L}ojasiewicz inequality} if for any bounded open  set $U\subset\R^n$ and any real $\tau$, there exists $\rho >0$ and a non-negative continuous function $\psi\colon [\tau,\tau+\rho)\to\R$, which is ${\bf C}^1$-smooth and strictly increasing on $(\tau,\tau+\rho)$, and such that the inequality 
$$|\nabla(\psi\circ f)|(x)\geq 1,$$
holds for all $x\in U$ with $\tau<f(x)<\tau+\rho$.
}
\end{definition}
\smallskip

In particular, all semi-algebraic functions satisfy the Kurdyka-{\L}ojasiewicz inequality \cite{tame_opt}. For an extensive study the Kurdyka-{\L}ojasiewicz inequality and a description of its historical significance, see for example \cite{tailwag}. The proof of the following theorem is almost identical to the proof of \cite[Theorem 7.1]{tame_opt}; hence we only provide a sketch. In fact, the theorem remains valid if rather than assuming semi-algebraicity, we only assume that the Kurdyka-{\L}ojasiewicz inequality is satisfied.
\smallskip
\begin{theorem}[Lengths of curves of near-maximal slope]{ \\}
Consider a lsc, semi-algebraic function $f\colon\R^n\to\overline{\R}$, and let $U$ be a bounded subset of $\R^n$. Then there exists a number $N >0$ such that the length of any curve of near-maximal slope for $f$ lying in $U$ does not exceed $N$.
\end{theorem}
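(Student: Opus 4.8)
The plan is to bound the length through the Kurdyka-{\L}ojasiewicz inequality, following the proof of \cite[Theorem 7.1]{tame_opt}. The heart of the matter is a one-dimensional differential inequality: after composing $f$ with a suitable desingularizing function $\psi$, the rate of descent of $\psi\circ f$ along the curve dominates the curve's speed, so that the total length is controlled by the oscillation of $\psi\circ f$. I would therefore first establish this estimate on a single KL window, and only afterwards globalize by a covering argument.

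So, let $\gamma\colon[a,b]\to\R^n$ be a curve of near-maximal slope lying in $U$, and set $K:=\cl U$, a compact set. Fix a bounded open set $V\supset K$ and, for a given threshold $\tau$ avoiding the critical values of $f$, let $\rho>0$ and $\psi\colon[\tau,\tau+\rho)\to\R$ be as furnished by the KL inequality applied to $V$. Since $f\circ\gamma$ is nonincreasing, the portion of $[a,b]$ on which $\tau<f(\gamma(t))<\tau+\rho$ is an interval, and on it $f(\gamma(t))$ is a regular value, so $\slof(\gamma(t))>0$; the points where $\slof(\gamma(t))=0$ contribute nothing to the length, since there $\|\dot\gamma(t)\|=\slof(\gamma(t))=0$ by property (b) of Definition~\ref{defn:cms}. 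A direct computation analogous to Proposition~\ref{prop:slope_param}, using that $\psi$ is ${\bf C}^1$-smooth and strictly increasing, gives the chain rule $\overline{|\nabla(\psi\circ f)|}(x)=\psi'(f(x))\cdot\slof(x)$; passing the KL inequality $|\nabla(\psi\circ f)|\ge 1$ to the limiting slope therefore yields $\psi'(f(\gamma(t)))\,\slof(\gamma(t))\ge 1$ along the relevant portion. Combining the ordinary chain rule $(\psi\circ f\circ\gamma)'=\psi'(f\circ\gamma)\cdot(f\circ\gamma)'$ with properties (c) and (b) of Definition~\ref{defn:cms} then gives, for almost every such $t$,
\[
(\psi\circ f\circ\gamma)'(t)\le -\psi'(f(\gamma(t)))\big(\slof(\gamma(t))\big)^2\le -\slof(\gamma(t))=-\|\dot\gamma(t)\|,
\]
where the second inequality multiplies $\psi'(f(\gamma(t)))\,\slof(\gamma(t))\ge 1$ by $\slof(\gamma(t))>0$. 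Because $\psi\circ f\circ\gamma$ is nonincreasing, the Fundamental Theorem of Calculus inequality for monotone functions (no absolute continuity is needed) bounds the length of this sub-portion of the curve by $-\int(\psi\circ f\circ\gamma)'\,dt\le \sup\psi-\inf\psi$ over $[\tau,\tau+\rho)$, a finite quantity independent of $\gamma$.

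It remains to globalize. Because $f$ is semi-algebraic, its set of critical values is finite, and these values partition the range of $f$ over $K$ into finitely many bands of regular values; on each band the KL inequality supplies a desingularizing function of finite oscillation, and summing these finitely many oscillations produces a constant $N$ depending only on $f$ and $U$. Adding the per-window length estimates over this finite cover then yields $\len(\gamma)\le N$ uniformly in $\gamma$. The one genuinely delicate point --- and the reason we only sketch the argument, deferring to \cite[Theorem 7.1]{tame_opt} --- is precisely this globalization: one must verify, using compactness of $K$ together with the semi-algebraic structure, that the entire range traversed by any admissible curve (including, when $f$ is unbounded on $K$, the topmost unbounded band) is covered by finitely many windows carrying uniformly bounded oscillations. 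The per-window estimate above is robust, whereas the covering is where the semi-algebraic geometry does the real work.
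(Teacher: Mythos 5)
Your per-window estimate is correct, and it is essentially the standard Kurdyka-{\L}ojasiewicz length computation: the differential inequality $(\psi\circ f\circ\gamma)'(t)\le -\|\dot\gamma(t)\|$ obtained from properties (b), (c) of Definition~\ref{defn:cms} plus the chain rule for the slope, then integrated against monotonicity. (The paper bounds a single band differently, via Cauchy--Schwarz, $l_i\le\sqrt{(\eta_i-\zeta_i)(\beta_i-\alpha_i)}$ combined with $l_i\ge c_i(\eta_i-\zeta_i)$, but the two computations are interchangeable.) The genuine gap is exactly where you chose to defer to \cite{tame_opt}: the globalization, and the mechanism you propose for it fails. You partition the range by the finitely many critical values of $f$ and claim that ``on each band the KL inequality supplies a desingularizing function of finite oscillation.'' This is false, for two reasons. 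First, the KL inequality as defined in the paper only furnishes $\psi$ on a one-sided window $[\tau,\tau+\rho)$ with $\rho$ beyond your control, not on a whole band. Second, and more seriously, the values at which desingularization is needed are \emph{not} the critical values of $f$: the relevant quantity is the infimal slope over level sets inside $U$, and it can degenerate at values carrying no critical points whatsoever. Concretely, take $f(x)=x^2$ for $x\neq 0$, $f(0)=-1$ (lsc and semi-algebraic), with $U=(-2,2)$. The only lower-critical value is $-1$, yet $\inf\{|\nabla f|(x):x\in U,\ f(x)=s\}=2\sqrt{s}\to 0$ as $s\downarrow 0$, and curves of near-maximal slope started at $x_0>0$ traverse exactly the values $(0,x_0^2]$, approaching the degenerate value $0$, which sits in the \emph{interior} of your band $(-1,+\infty)$. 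No ${\bf C}^1$ increasing $\psi$ with $|\nabla(\psi\circ f)|\ge 1$ can exist on that whole band, since the KL inequality forces $\psi'(s)\ge 1/(2\sqrt{s})\to\infty$ as $s\downarrow 0$, contradicting continuity of $\psi'$ at $s=0$; so your per-band desingularizer simply does not exist, and invoking KL at the band's endpoint $\tau=-1$ gives a window disjoint from the values actually traversed.

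What repairs this --- and is the real content of the paper's proof, not a detail one can defer --- is the auxiliary function $\xi(s)=\inf\{|\nabla f|(x):x\in U,\,f(x)=s\}$. Semi-algebraicity of $\xi$ yields a partition of the value range into finitely many intervals $(\alpha_i,\beta_i)$ on which $\xi$ is continuous and monotone or constant; the semi-algebraic Sard theorem gives $\xi>0$ on each open interval; on intervals with $c_i:=\inf\xi>0$ the length bound follows directly, and on intervals with $c_i=0$ one invokes KL at the degenerate endpoint and \emph{extends} the desingularizer across the interval using positivity of $\xi$ away from that endpoint. It is the finite set of degeneration values of $\xi$ --- not the critical values of $f$ --- that indexes the covering; in the example above this set contains $0$, which your partition misses. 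Two smaller points: the paper disposes of your ``topmost unbounded band'' at the outset by replacing $f$ with $\psi\circ f$, $\psi(t)=t/\sqrt{1+t^2}$, so the range may be assumed bounded; and the KL definition does not guarantee $\psi$ is bounded on $[\tau,\tau+\rho)$, so even your per-window bound $\sup\psi-\inf\psi<\infty$ requires first shrinking $\rho$.
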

\begin{proof}
Let $x\colon [0,T)$ be a curve of near-maximal slope for $f$ and let $\psi$ be any strictly increasing ${\bf C}^1$-smooth function on an interval containing the image of $f\circ x$. It is then easy to see then that, up to a reparametrization, $x$ is a curve of near-maximal slope for the composite function $\psi\circ f$. In particular, we may assume that $f$ is bounded on $U$, since otherwise we may for example replace $f$ by $\psi\circ f$ where $\psi(t)=\frac{t}{\sqrt{1+t^2}}$. 

Define the function 
$$\xi(s)=\inf\{|\nabla f|(x): x\in U,\, f(x)=s\}.$$

Standard arguments show that $\xi$ is semi-algebraic. Consequently, with an exception of finitely many points, the domain of $\xi$ is a union of finitely many open intervals $(\alpha_i,\beta_i)$, with $\xi$ continuous and either strictly monotone or constant on each such interval. 
Define for each index $i$, the quantity $$c_i=\inf\{\xi(s):s\in (\alpha_i,\beta_i)\}.$$ We first claim that $\xi$ is strictly positive on each interval $(\alpha_i,\beta_i)$. This is clear for indices $i$ with $c_i >0$. On the other hand if we have $c_i=0$, then by Sard's theorem \cite{ioffe_strat} the function $\xi$  is strictly positive on $(\alpha_i,\beta_i)$ as well.

Define $\zeta_i$ and $\eta_i$ by $$\zeta=\inf\{t: f(x(t))=\alpha_i\}\quad \textrm{and} \quad \eta=\sup\{t: f(x(t))=\beta_i\},$$
and let $l_i$ be the length of $x(t)$ between $\zeta_i$ and $\eta_i$.

Then we have
$$l_i=\int_{\zeta_i}^{\eta_i}\|\dot{x}(t)\| dt=\int_{\zeta_i}^{\eta_i}\slof (x(t)) dt\leq\Big((\eta_i-\zeta_i)\int_{\zeta_i}^{\eta_i}\slof(x(t))^2 dt\Big)^{\frac{1}{2}}.$$
On the other hand, observe
$$\int_{\zeta_i}^{\eta_i}\slof(x(t))^2 dt=f(x(\eta_i))-f(x(\zeta_i))=\beta_i-\alpha_i.$$
Finally in the case $c_i >0$ we have $l_i\geq c_i(\eta_i-\zeta_i)$, which combined with the two equations above yields the bound $$l_i\leq\frac{\beta_i-\alpha_i}{c_i}.$$

If the equation $c_i=0$ holds, then by the Kurdyka-{\L}ojasiewicz inequality we can find a continuous function $\xi_i\colon [\alpha_i,\alpha_i+\rho)\to\R$, for some $\rho >0$, where $\xi$ is strictly positive and ${\bf C}^1$-smooth on $(\alpha_i,\alpha_i+\rho)$ and satisfying 
$|\nabla (\xi_i\circ f)|(y)\geq 1$
for any $y\in U$ with $\alpha_i<f(y)<\alpha_i+\rho$.
Since $\xi_i$ is strictly increasing on $(\alpha_i,\alpha_i+\rho)$, it is not difficult to check 
that we may extend $\xi_i$ to a continuous function on $[\alpha_i,\beta_i]$ and so that this extension is ${\bf C}^1$-smooth and strictly increasing on $(\alpha_i,\beta_i)$ with the inequality $|\nabla (\xi_i\circ f)|(y)\geq 1$ being valid for any $y\in U$ with $\alpha_i<f(y)<\beta_i$.

Then as we have seen before, up to a reparametrization, the curve $x(t)$ for $t\in [\zeta_i,\eta_i]$ is a curve of near maximal slope for the function $\xi_i\circ f$. Then as above, we obtain the bound $l_i\leq \xi_i(\beta_i)-\xi_i(\alpha_i)$. 

We conclude that the length of the curve $x(t)$ is bounded by a constant that depends only on $f$ and on $U$, thereby completing the proof.
\end{proof}
\smallskip

The following consequence is now immediate.
\smallskip
\begin{corollary}[Convergence of curves of near-maximal slope]
Consider a lsc, semi-algebraic function $f\colon\R^n\to\overline{\R}$. Then any curve of near-maximal slope for $f$ that is bounded and has a maximal domain of definition converges to a lower-critical point of $f$.
\end{corollary}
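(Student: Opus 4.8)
The plan is to reduce everything to a finite-length, arclength reparametrization and then let maximality of the domain do the work. First, since $x$ is bounded it lies in some bounded set $U$, so the preceding theorem bounding the lengths of curves of near-maximal slope gives $\len(x)=:L<\infty$. Invoking Proposition~\ref{prop:trans} together with the arclength parametrization of Theorem~\ref{thm:arc_param}, I would replace $x$ by an equivalent $1$-Lipschitz near-steepest descent curve $\gamma\colon[0,L)\to\R^n$; here finiteness of $L$ is exactly what makes the parameter interval genuinely bounded, and maximality of the original domain (whether finite or infinite in the near-maximal-slope parametrization) transfers to maximality of $[0,L)$. Because $\gamma$ is $1$-Lipschitz on a bounded interval it is uniformly continuous, so the limit $\bar x:=\lim_{t\to L}\gamma(t)$ exists. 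This already delivers convergence of the curve, and it remains only to identify $\bar x$ as a lower-critical point.

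Next I would argue by contradiction that $\slof(\bar x)=0$, supposing instead $\slof(\bar x)>0$. Note first that $\bar x\in\dom f$: since $f$ is lsc and $\gamma(t)\to\bar x$, lower-semicontinuity gives $\liminf_{t\to L}f(\gamma(t))\ge f(\bar x)$, and as $f\circ\gamma$ is nonincreasing this forces $\ell:=\lim_{t\to L}f(\gamma(t))\ge f(\bar x)>-\infty$, so $f(\bar x)$ is finite. Applying the existence theorem (Theorem~\ref{thm:near_steep}, whose hypotheses are available for our semi-algebraic $f$ on $\R^n$, where bounded closed subsets of sublevel sets are automatically compact) at the point $\bar x$ produces a nonconstant $1$-Lipschitz near-steepest descent curve $\delta\colon[0,\epsilon)\to\R^n$ with $\delta(0)=\bar x$. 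I would then concatenate, defining $\tilde\gamma$ to equal $\gamma$ on $[0,L)$ and $\tilde\gamma(L+s)=\delta(s)$ on $[0,\epsilon)$.

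The crux is to check that $\tilde\gamma$ is again a near-steepest descent curve on the strictly larger interval $[0,L+\epsilon)$, which then contradicts maximality and closes the proof. The two pieces meet continuously since $\gamma(t)\to\bar x=\delta(0)$, and the $1$-Lipschitz estimate across the junction follows from the triangle inequality $d(\gamma(L-a),\delta(b))\le a+b$. The one delicate point is monotonicity of $f\circ\tilde\gamma$: at the junction the value may drop from $\ell$ to $f(\bar x)$, but the inequality $\ell\ge f(\bar x)$ established above guarantees this is a downward (hence admissible) jump, so $f\circ\tilde\gamma$ stays nonincreasing; meanwhile the a.e. slope inequality of Definition~\ref{defn:steep_des} is inherited on each piece and is unaffected by the single junction point. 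Thus $\tilde\gamma$ genuinely extends $\gamma$, contradicting maximality, and we conclude $\slof(\bar x)=0$, that is, $\bar x$ is a lower-critical point to which $\gamma$ converges.

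I expect the main obstacle to be precisely this extension step. The subtlety is twofold: one must verify that maximality of the domain is correctly preserved under the arclength reparametrization, and one must control a possible downward jump of $f$ at the junction. This jump is exactly the value gap $\ell>f(\bar x)$ that mere lower-semicontinuity permits, and it is worth emphasizing that the argument deliberately never tries to rule this gap out (indeed it can occur); instead it examines $\slof(\bar x)$ directly through extendability, so the gap is harmless. Once these bookkeeping points are settled, convergence to a lower-critical point is an immediate consequence of the finite-length estimate of the preceding theorem.
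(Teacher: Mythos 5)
Your overall scheme --- finite length from the preceding theorem, hence convergence; then lower-criticality of the limit via an extension-plus-maximality contradiction --- is surely what the paper's ``is now immediate'' has in mind, and the convergence half is correct. It is in fact simpler than you make it: finite length means the tail integrals $\int_s^{T}\|\dot x(\tau)\|\,d\tau$ vanish, so $x(t)$ is Cauchy as $t$ approaches the right endpoint and converges by completeness; no arclength reparametrization is needed (and note that Proposition~\ref{prop:trans}, which you do invoke, has the hypothesis that $(\slof\circ x)^{-1}$ be finite a.e., which you never check). The genuine gaps are both in the criticality half. First, Theorem~\ref{thm:near_steep} has \emph{two} hypotheses and you verify only compactness: it also requires $f$ to be continuous on slope-bounded sets, which is neither assumed in the corollary nor implied by ``lsc and semi-algebraic.'' For instance, $f(x)=1+x$ for $x<0$ and $f(x)=x$ for $x\ge 0$ is lsc and semi-algebraic, yet the points $x_i=-1/i\to 0$ satisfy $|\nabla f|(x_i)=1$ and $f(x_i)\to 1\neq 0=f(0)$. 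So the extension curve $\delta$ emanating from $\bar x$ is not guaranteed to exist by that theorem.

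Second, and decisively, your claim that maximality of the original domain ``transfers'' to the arclength interval $[0,L)$ is false, and the contradiction evaporates exactly there. A curve of near-maximal slope has speed $\slof(x(t))$, which typically decays to zero as the curve approaches its limit, so the maximal domain typically has \emph{infinite} time-length; an extension of the unit-speed curve past arclength $L$ then corresponds, after converting back through Proposition~\ref{prop:trans}, to a gluing ``at time $+\infty$'' and produces no extension of $x$ at all, hence no contradiction with maximality. This is not bookkeeping: take $f(x)=x$ for $x\le 0$ and $f(x)=1+x^2(x-1)^2$ for $x>0$, which is lsc and semi-algebraic. The trajectory of $\dot u(t)=-\slof(u(t))$ through $u(0)=1/4$ is a bounded curve of near-maximal slope whose maximal domain is all of $\R$ (it needs infinite backward time to leave the critical point $1/2$ and infinite forward time to reach $0$); it converges forward to $\bar x=0$, where $\slof(0)=1\neq 0$, and yet its arclength reparametrization \emph{does} extend past $0$ as a near-steepest-descent curve by dropping across the jump onto the branch $f(x)=x$. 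Applied to this $f$, your argument would ``prove'' $\slof(0)=0$, which is false. The downward value jump $\ell>f(\bar x)$ that you explicitly chose to tolerate is precisely the culprit. When such jumps are excluded (for example when $f$ is continuous on its domain, as elsewhere in Section~\ref{sec:comp}), the infinite-horizon case follows directly with no extension argument: condition (c) of Definition~\ref{defn:cms} gives $\int\slof(x(t))^2\,dt<\infty$, hence times $t_i$ with $\slof(x(t_i))\to 0$ and $f(x(t_i))\to f(\bar x)$, and then $\slof(\bar x)=0$ by the very definition of the limiting slope; the finite-horizon case is then the legitimate place for your extension idea.
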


\bibliographystyle{plain}
\small
\parsep 0pt
\bibliography{trajectories}

\end{document}